\definecolor{darkblue}{rgb}{0,0,.6}
\newtheoremstyle{assumption}
                {2pt}
                {2pt}
                {\itshape}
                {}
                {\bfseries}
                {}
                { }
                { } 
\numberwithin{equation}{section}
\newtheorem{theorem}{Theorem}[section]
\newtheorem{definition}[theorem]{Definition}
\newtheorem{lemma}[theorem]{Lemma}
\newtheorem{corollary}[theorem]{Corollary}
\newtheorem{proposition}[theorem]{Proposition}
\theoremstyle{remark}
\newtheorem{remark}[theorem]{Remark}
\theoremstyle{assumption}
\newtheorem{assumption}{}
\newenvironment{keywords}{{\bf Key words: }}{}
\newenvironment{AMS}{{\bf AMS subject classification: }}{}
\title{Classical solution to second-order Hamilton-Jacobi-Bellman equation and optimal feedback control for linear-convex problem}
\author{Jinghua Li and Zhiyong Yu \thanks{School of Mathematics, Shandong University, Jinan 250100, China. E-mails: {\color{darkblue}{\tt jinghua.li@mail.sdu.edu.cn}} (J. Li), {\color{darkblue}{\tt yuzhiyong@sdu.edu.cn}} (Z. Yu). The corresponding author is Zhiyong Yu.}  }
\date{\today}
\begin{document}
\maketitle
\begin{abstract}
In this paper, we are concerned with the classical solvability of a class of second-order Hamilton-Jacobi-Bellman equations (HJB equations) arising from stochastic optimal control problems with linear dynamics and uniformly convex cost functionals. 
By introducing the Hamiltonian system and extending the gradient descent method to a Hilbert space, we prove the existence and uniqueness of the optimal control under the uniform convexity condition. 
The regularity of the solution to the Hamiltonian system is obtained, including the derivatives with respect to the initial state and the Malliavin derivatives.
The connection between the Hamiltonian system and the value function is subsequently proven, enabling us to derive regularity properties of the value function via probabilistic techniques. 
Finally, by the dynamic programming principle, the value function is verified to be the unique classical solution to the HJB equation and the optimal feedback control is provided. 
These results generalize the classical linear-quadratic theory and provide a new insight into the regularity of the value function.
\end{abstract}

\begin{keywords}
  Second-order Hamilton-Jacobi-Bellman equation, classical solution, stochastic optimal control, Hamiltonian system, feedback control.
\end{keywords}

\vspace{0.5cm}

\begin{AMS}
93E20, 60H10, 35K15, 49L12 
\end{AMS}

\section{Introduction}\label{sec1}

In this paper, we are concerned with the following second-order HJB equation, which is a fully nonlinear partial differential equation (PDE):
\begin{equation}\label{1:HJB}
\left\{
\begin{aligned}
& \frac {\partial V}{\partial t}(t,x)+(\mathcal {L}V)(t,x)+ H(t,x,\mathrm{D}_xV(t,x),\mathrm{D}^2_{xx}V(t,x))=0,\quad t\in [0,T],\\
& V(T,x)=g(x),
\end{aligned}
\right.
\end{equation}
where $V:[0,T]\times \mathbb R^n\to\mathbb R$ is the unknown function.
The operator $\mathcal L$ is defined as
$$
\begin{aligned}
(\mathcal {L}V)(t,x):=&\langle \mathrm{D}_xV(t,x),A(t)x+b(t)\rangle\\
&+\frac12\sum_{i=1}^d\langle \mathrm{D}^2_{xx}V(t,x)[C_i(t)x+\sigma_i(t)],C_i(t)x+\sigma_i(t)\rangle.
\end{aligned}
$$
The function $H(t,x,p,Q):=\inf_{u\in\mathbb R^m}\mathcal H(t,x,p,Q,u)$, where $\mathcal H:[0,T]\times\mathbb R^{n+n+n\times n+m}\to\mathbb R$ is defined as
$$
\begin{aligned}
\mathcal H(t,x,p,Q,u):= &\langle u,B(t)^\top p+\sum_{i=1}^dD_i(t)^\top Q[C_i(t)x+\sigma_i(t)]\rangle\\
&+\frac12\sum_{i=1}^d \langle D_i(t)^\top QD_i(t)u,u\rangle+l(t,x,u).
\end{aligned}
$$
Here, $A,C_1,...,C_d:[0,T]\to\mathbb R^{n\times n}$, $B,D_1,...,D_d:[0,T]\to\mathbb R^{n\times m}$, $b,\sigma_1,...,\sigma_d:[0,T]\to\mathbb R^n$, $g:\mathbb R^n\to\mathbb R$, $l:[0,T]\times \mathbb R^{n+m}\to\mathbb R$ are measurable functions
and the differential operators $\mathrm{D}_x,\mathrm{D}^2_{xx}$ are defined as follows:
\begin{equation}\label{1:do}
    \mathrm{D}_x:=\left(\frac {\partial }{\partial x_1},\frac {\partial }{\partial x_2},...,\frac {\partial }{\partial x_n}\right)^\top,~~\mathrm D^2_{xx}:=\mathrm D_x \mathrm D_x^\top=\left(\frac {\partial^2}{\partial x_i\partial x_j}\right)_{n\times n}.
  \end{equation} 
We denote $C=(C_1^\top,...,C_d^\top)^\top$, $D=(D_1^\top,...,D_d^\top)^\top$, $\sigma=(\sigma_1^\top,...,\sigma_d^\top)^\top$ for convenience. 

The above HJB equation \eqref{1:HJB} is related to the following stochastic linear-convex optimal control problem (LC problem).
The state process satisfies the following controlled linear stochastic differential equation (SDE):
\begin{equation}\label{1:SDE0}
\left\{
\begin{aligned}
\mathrm dX^{0,x,u}_s =& \big[A(s)X^{0,x,u}_s+B(s)u_s+b(s)\big] \, \mathrm ds \\
&+\sum_{i=1}^d \big[C_i(s)X^{0,x,u}_s+D_i(s)u_s+\sigma_i(s)\big]\, \mathrm dW^i_s,\quad s\in [0,T],\\
X^{0,x,u}_0 =& x\in\mathbb R^n,
\end{aligned}
\right.
\end{equation} 
where $W:=(W^1,...,W^d)^{\top}$ is a $d$-dimensional standard Brownian motion defined on a completed filtered probability space $(\Omega, \mathcal F, \mathbb F, \mathbb P)$ with $\mathbb F=\{\mathcal F_t\}_{0\leq t\leq T}$ being the augmented natural filtration generated by $W$.
The admissible control $u$ belongs to $\mathscr U_{\mathrm {ad}}[0,T]$ where
$$
\mathscr U_{\mathrm {ad}}[0,T]:=\bigg\{\phi:\Omega\times[0,T]\to\mathbb R^m\bigg\vert\phi\mbox{ is }\mathbb F\mbox{-adapted such that }\mathbb E\int_0^T\vert\phi_s\vert^2\, \mathrm ds<\infty\bigg\}.
$$
The cost functional is defined as
\begin{equation} \label{1:cf}
J(0,x;u)=\mathbb E\left[g(X^{0,x,u}_T)+\int_0^Tl(s,X^{0,x,u}_s,u_s)\, \mathrm ds\right],
\end{equation} 
which is assumed to satisfy the following uniform convexity condition:
\begin{equation} \label{1:uc}
\mbox{The mapping }u\mapsto J(0,x;u)\mbox{ is uniformly convex.} 
\end{equation} 
The aim of the LC problem is to find the optimal control $\bar u^{0,x}$ such that 
$$
J(0,x;\bar u^{0,x})=\inf_{u\in\mathscr U_{\mathrm{ad}}[0,T]}J(0,x;u).
$$

When $g,l$ take quadratic forms, i.e., $g(x)=\langle Gx,x\rangle$ and $l(t,x,u)=\langle Q(t)x,x\rangle+2\langle S(t)x,u\rangle+\langle R(t)u,u\rangle$, where $G,Q,S,R$ are matrix valued with proper dimensions,
the above optimal control problem is reduced to a linear-quadratic optimal control problem (LQ problem), which has attracted attentions of many scholars. 
Due to the distinctive structure of the LQ problem, the value function $V$ is verified to admit a quadratic form, i.e., $V(t,x)=\langle P(t)x,x\rangle$, where $P$ is symmetric matrix valued. 
Accordingly, the HJB equation \eqref{1:HJB} is reduced to a Riccati equation, which characterizes $P$ and is nonlinear in $P$. 
The related research is fruitful and well-developed. 
For example, one could refer to Sun and Yong \cite{sy18,sy19} for the case of deterministic coefficients. 
Tang \cite{t03,t15} first established the well-posedness results of the stochastic Riccati equations (SREs), which arise from LQ problems with random coefficients. 
Sun et al. \cite{sxy21} proved the well-posedness of SREs under the uniform convexity condition.
One could further refer to \cite{gm09,ho08,hly15,sxy21,wyy19,y13} for the corresponding results under various settings.

In the present paper, we focus on the LC problems, which are more general than the LQ cases.
For the open-loop optimal control of the LC problem, the case where all coefficients are stochastic is considered, which is beneficial to subsequent estimates. 
By introducing the adjoint equation, which is a linear backward stochastic differential equation (BSDE), the cost functional is proven to be Frech\'et differentiable. 
Since $\mathscr U_{\mathrm{ad}}[0,T]$ is a Hilbert space, the Frech\'et derivative is represented by an operator $\mathcal D$, which maps $\mathscr U_{\mathrm {ad}}[0,T]$ into itself. 
Combining the controlled SDE, the adjoint BSDE and the stationary condition $\mathcal D[u]=0$, a non-linear Hamiltonian system is introduced to characterize the optimal control $\bar u^{0,x}$ in the open-loop form, whose solution consists of four processes $(\bar X^{0,x},\bar Y^{0,x},\bar Z^{0,x},\bar u^{0,x})$.
Therefore, to find the open-loop optimal control of the LC problem is converted into solving the Hamiltonian system.

Historically, when assuming $l$ is uniformly convex in $u$, the optimal control $\bar u^{0,x}$ can be represented by $(\bar X^{0,x},\bar Y^{0,x},\bar Z^{0,x})$ based on the equation $\mathcal D[u]=0$.
By substituting the representation into the controlled SDE and the adjoint BSDE, the Hamiltonian system is rewritten as a nonlinear forward-backward stochastic differential equation (FBSDE).
For the well-posedness of such an FBSDE, a widely used method is the so-called {\em method of continuation}, which was established by Hu and Peng \cite{hp95}, Yong \cite{y97} and Peng and Wu \cite{pw99}.
The main idea is that, under suitable monotonicity conditions, the well-posedness of FBSDEs still holds when the coefficients are slightly perturbed.
Therefore, by continuously perturbing the coefficients from an initial case with known well-posedness, the well-posedness of the desired FBSDE is obtained.
When considering the well-posedness of the Hamiltonian systems arising from LC problems, Liu et al. \cite{lnwy25} introduced a kind of nonlinear domination-monotonicity conditions, which coincide with the Hamiltonian system.
For the mean field case, Carmona and Delarue \cite{cd15} also obtained the unique solution to the Hamiltonian system by introducing the convex functions of measures.
Bensoussan et al. \cite{bhty24} considered the case where all coefficients are defined on Hilbert spaces and provided the well-posedness of FBSDEs under the $\beta$-monotonicity conditions, also obtaining the optimal control for the mean field case.

In this paper, only the uniform convexity condition \eqref{1:uc} is taken into account, under which $l$ may be not uniformly convex in $u$.
Therefore, the Hamiltonian system may not be converted to an FBSDE.
Specifically, the uniform convexity condition \eqref{1:uc} contains the following two special cases (compared with the LQ case, the positive definiteness of matrices is replaced by the convexity of functions):
\begin{itemize}
  \item {\em Case 1}. There is a constant $\delta>0$ such that, for almost all $(t,\omega)\in[0,T]\times\Omega$, the functions $g(x)$ and $l(t,x,u)-\frac{\delta}{2}\vert u\vert^2$ are convex in $(x,u)$.
  \item {\em Case 2}. There is a constant $\delta>0$ such that, for almost all $(t,\omega)\in[0,T]\times\Omega$, the functions $g(x)-\frac{\delta}{2}\vert x\vert^2$ and $l(t,x,u)$ are convex in $(x,u)$ and $D(t)^{\top}D(t)\geq\delta I_m$.
\end{itemize}
Here, $D(t)^{\top}D(t)\geq\delta I_m$ means that $D(t)^{\top}D(t)-\delta I_m$ is positive semidefinite, where $I_m$ is the $m$-order identity matrix.
The monotonicity conditions mentioned above can be verified under Case 1.
However, when $l$ is not uniformly convex in $u$ (for example, Case 2), the representation of the optimal control $\bar u^{0,x}$ by $(\bar X^{0,x},\bar Y^{0,x},\bar Z^{0,x})$ is unavailable, leading to the inapplicability of the method of continuation.

\label{case}
To overcome the difficulty, we extend the gradient descent method to the Hilbert space $\mathscr U_{\mathrm {ad}}[0,T]$ to prove the well-posedness of the Hamiltonian system.
In detail, we introduce a mapping $\mathcal T$ from $\mathscr U_{\mathrm {ad}}[0,T]$ into itself as follows:
\begin{equation} 
\mathcal T[u]:=u-\eta\mathcal D[u]
\end{equation} 
with an undetermined parameter $\eta>0$ which is called the learning rate, 
i.e., for any given initial control $u$, we update it by subtracting a scaled Frech\'et derivative of the cost functional.  
Due to the uniform convexity of the cost functional \eqref{1:cf}, we can select a suitable learning rate $\eta$ such that $\mathcal T$ is contractive.
As a result, the unique fixed point of $\mathcal T$ is verified to be the unique optimal control of the LC problem. 
Compared with the method of continuation, which needs multi-step fixed point iteration, our method is more direct and efficient to find the optimal control only by one-step iteration.  

For the optimal feedback control, when considering deterministic control systems, You \cite{yy87,yy97} introduced a kind of quasi-Riccati equations to characterize the optimal feedback control for the LC problem with $l$ being the following form:
\[
l(t,x,u)=M(t,x)+ \langle R(t)u,u\rangle,  
\]
where $M$ is a function that is convex in $x$, and $R$ is a mapping whose values are symmetric matrices.
Under our assumption, the optimal feedback control is related to the HJB equation \eqref{1:HJB}.
We further consider a family of parameterized optimal control problems:
For any given initial pair $(t,x)\in[0,T]\times \mathbb R^n$, minimize the following cost functional:
\begin{equation} 
J(t,x;u)=\mathbb E\left[g(X^{t,x,u}_T)+\int_t^Tl(s,X^{t,x,u}_s,u_s)\, \mathrm ds\right],
\end{equation} 
subject to the following controlled linear SDE:
\begin{equation}\label{1:SDEt}
\left\{
\begin{aligned} 
\mathrm dX^{t,x,u}_s=& \big[A(s)X^{t,x,u}_s+B(s)u_s+b(s)\big] \, \mathrm ds\\
&+\sum_{i=1}^d \big[C_i(s)X^{t,x,u}_s+D_i(s)u_s+\sigma_i(s)\big]\, \mathrm dW^i_s,\quad s\in [t,T],\\
X^{t,x,u}_t =& x,
\end{aligned}
\right.
\end{equation}
over the admissible control set $\mathscr U_{\mathrm{ad}}[t,T]$, where
$$
\mathscr U_{\mathrm{ad}}[t,T]:=\bigg\{\phi:\Omega\times[t,T]\to\mathbb R^m\bigg\vert\phi\mbox{ is }\mathbb F\mbox{-adapted such that }\mathbb E\int_t^T\vert\phi_s\vert^2\, \mathrm ds<\infty\bigg\}.
$$
The optimal control problem with the parameter $(t,x)$ is formulated as follows:

\noindent {\bf Problem (LC)$_{(t,x)}$}. Find the optimal control $\bar u^{t,x}\in\mathscr U_{\mathrm {ad}}[t,T]$, such that
$$
J(t,x;\bar u^{t,x})=\inf_{u\in\mathscr U_{\mathrm {ad}}[t,T]}J(t,x;u).
$$

Applying the previous well-posedness result of the Hamiltonian system, the optimal control $\bar u^{t,x}$ is immediately obtained.
We define the so-called value function $V:[0,T]\times \mathbb R^n\to\mathbb R$ by
\begin{equation} 
V(t,x):=\inf_{u\in\mathscr U_{\mathrm {ad}}[t,T]}J(t,x;u)=J(t,x;\bar u^{t,x}).
\end{equation} 
By the classical control theory, the value function $V$ is the viscosity solution of the HJB equation \eqref{1:HJB} under some appropriate conditions (see Yong and Zhou \cite[Chapter 4, Theorem 5.2]{yz99} for example). 
For the viscosity solution of HJB equations, please refer to Peng \cite{p92} for the case where the cost functional is characterized by a BSDE, Ekren et al. \cite{ektz14} for the path dependent case and Burzoni et al. \cite{birs20} for the mean field case.
In the present paper, we aim to prove the unique solvability of the HJB equation \eqref{1:HJB} in the classical sense, which requires us to obtain more regularity of $V$ than the viscosity sense.

Based on the well-posedness result of the Hamiltonian system, we obtain the derivatives with respect to $x$ and the Malliavin derivatives of the solution $(\bar X^{t,x},\bar Y^{t,x},\bar Z^{t,x},\bar u^{t,x})$.
The derivatives with respect to $x$, denoted by $(\nabla X^{t,x},\nabla Y^{t,x},\nabla Z^{t,x},\nabla u^{t,x})$, are verified to satisfy a linear Hamiltonian system. 
With the help of the differentiability of $(\bar X^{t,x},\bar u^{t,x})$, the value function $V$ is proven to be once differentiable with respect to $x$. 
Then, by applying It\^o's formula to $(\nabla X^{t,x}_s)^{\top} \bar Y^{t,x}_s$ on $[t,T]$ and comparing to $\mathrm D_xV(t,x)$, the key connection between the Hamiltonian system and the value function is identified as follows: 
\begin{equation}\label{1:cnt}
  \mathrm D_xV(t,x)=\bar Y^{t,x}_t.
\end{equation}
In the general case where the controlled SDE \eqref{1:SDE0} is nonlinear, Yong and Zhou \cite[Chapter 5, Theorem 4.1]{yz99} assumed that the value function is regular enough ($\mathrm D_xV\in C^{1,2}([0,T]\times\mathbb R^n;\mathbb R^n)$) and obtained the same connection by applying It\^o's formula to $\mathrm D_xV(s,\bar X^{t,x}_s)$. 
However, it is unnecessary to impose such stringent regularity conditions under our setting. 
On the contrary, we aim to obtain the regularity of $V$ by the connection \eqref{1:cnt} via probabilistic techniques. 

\label{open}
Here, we would like to emphasize that the connection \eqref{1:cnt} is crucial for obtaining the regularity of the value function $V$. 
In the literature, when using the probabilistic method to study the well-posedness of second-order PDEs, for example \cite{pp92,p92,wy14,wxy25}, the connection $\psi(t,x)=\bar Y^{t,x}_t$ between the solution $\psi$ to the PDE and the solution $\bar Y^{t,x}$ to the FBSDE is generally considered. 
Therefore, to obtain the twice differentiability of $\psi$, the second-order derivatives of $(\bar X^{t,x},\bar Y^{t,x},\bar Z^{t,x})$ should be taken into consideration (see Wu et al. \cite{wxy25} for example).
In this case, the $L^p$ ($p\geq 4$) estimates for FBSDEs are required, which are still open for coupled FBSDEs under general conditions.
However, due to the connection \eqref{1:cnt}, we only need the first-order derivatives of $(\bar X^{t,x},\bar Y^{t,x},\bar Z^{t,x})$ to obtain the second-order derivatives of the value function $V$, which is exempt from $L^p$ estimates.

By the differentiability of $\bar Y^{t,x}_t$ with respect to $x$, we further derive that $V$ is twice differentiable with respect to $x$ and $\mathrm D^2_{xx}V(t,x)=\nabla Y^{t,x}_t$. 
To obtain the continuity of $\mathrm D^2_{xx}V$, the continuity of $\nabla Y^{t,x}_t$ with respect to $(t,x)$ is needed.
Due to the absence of $L^p$ ($p\geq4$) estimates for Hamiltonian systems, Kolmogorov's continuity theorem is not available to obtain the version of $\nabla Y^{t,x}_s$ jointly continuous in $(t,s,x)$.
Inspired by Wu et al. \cite{wxy25}, we prove the continuous dependence of $\nabla Y^{t,x}$ with respect to $(t,x)$ only by $L^2$ estimate, obtaining the continuity of $\mathrm D^2_{xx}V$.

Moreover, with the help of the results in Sun et al. \cite{sxy21}, the following regular property of $V$ is proven:
There exists a constant $\delta>0$ such that, for all $(t,x,u)\in[0,T]\times\mathbb R^m$,
\begin{equation}\label{1:pov}
  \mathrm D^2_{uu}l(t,x,u)+\sum_{i=1}^d D_i(t)^\top \mathrm D^2_{xx}V(t,x)D_i(t)\geq\delta I_m.
\end{equation}
Here, similarly to \eqref{1:do}, the differential operators are defined as
\begin{equation}\label{1:do2}
\mathrm D_u:=\left(\frac {\partial }{\partial u_1},\frac {\partial }{\partial u_2},...,\frac {\partial }{\partial u_m}\right)^\top\mbox{ and }\mathrm D^2_{uu}:=\mathrm D_u\mathrm D_u^\top.
\end{equation}
By \eqref{1:pov} and the implicit function theorem, there exists a function $\bar{\mathbbm u}(t,x)$ such that $\bar{\mathbbm u}(t,x)$ is the minimizer of $\mathcal H(t,x,\mathrm{D}_xV(t,x),\mathrm{D}^2_{xx}V(t,x),u)$ for any $(t,x)\in[0,T]\times\mathbb R^n$,
Combining the Malliavin derivatives, $(\bar Z^{t,x},\bar u^{t,x})$ is proven to admit a continuous version and the optimal control $\bar u^{t,x}$ admits the following state feedback form: 
$$\bar u^{t,x}_{s}=\bar{\mathbbm u}(s,\bar X^{t,x}_{s}).$$

Finally, we prove the dynamic programming principle. 
With the help of the dynamic programming principle, the value function $V$ is verified to be differentiable with respect to $t$ and satisfies the HJB equation \eqref{1:HJB} in the classical sense, leading to the existence of the HJB equation \eqref{1:HJB}.
By introducing the weak formulation, we provide the optimal feedback control and introduce the corresponding closed-loop system. 
The stochastic verification theorem in the weak formulation is proven, which ensures the uniqueness of the HJB equation \eqref{1:HJB}.
Based on the existence and uniqueness of the HJB equation \eqref{1:HJB}, we prove that the closed-loop system admits a unique strong solution, which demonstrates that the optimal feedback control in the weak formulation is also optimal in the strong formulation.

Now, we would like to highlight the innovations and features of this paper as follows.
\begin{itemize}
  \item The unique classical solution to the HJB equation \eqref{1:HJB} is obtained under the uniform convexity condition. 
  Moreover, we provide the optimal feedback control for Problem (LC)$_{(t,x)}$ in both strong and weak formulations.  
  \item The nonlinear Hamiltonian system for Problem (LC)$_{(t,x)}$ is introduced, which is a generalization of the linear Hamiltonian systems arising from LQ problems. 
  By extending the gradient descent method to the Hilbert space, the well-posedness of the Hamiltonian system is proven under the uniform convexity condition, which is weaker than the monotonicity conditions used in the method of continuation.
  \item The connection \eqref{1:cnt} between the Hamiltonian system and the value function is identified, which ensures us to obtain the twice differentiability with respect to $x$ of the value function via the probabilistic techniques. 
  Moreover, the connection \eqref{1:cnt} provides a new insight into the relationship between FBSDEs and second-order PDEs.
\end{itemize}

The rest of this paper is organized as follows. 
In Section \ref{sec2}, we prove the well-posedness of the Hamiltonian system under the uniform convexity condition. 
In Section \ref{sec3}, the regularity of the solution to the Hamiltonian system is presented, including the derivatives with respect to $x$ and the Malliavin derivatives. 
In Section \ref{sec4}, based on the connection \eqref{1:cnt}, some properties of value function are proven.
In Section \ref{sec5}, we prove the unique classical solvability for the HJB equation \eqref{1:HJB} and provide the optimal feedback control.
The proofs of two lemmas are placed in Appendices \ref{APP-A} and \ref{APP-B}, respectively.
We provide a sufficient condition for the convexity of the value function $V$ in Appendix \ref{APP-C}.

\section{Hamiltonian system and open-loop optimal control}\label{sec2}
First of all, we would like to introduce some notations and spaces used in this paper.
Let $\mathbb R^n$ be the $n$-dimensional Euclidean space equipped with the Euclidean inner product $\langle \cdot,\cdot\rangle$ and the induced norm $\vert\cdot\vert$ . 
Specially, let $\mathbb R^{n\times m}$ be the set of all $(n\times m)$ matrices and $\mathbb S^n$ be the set of all $(n\times n)$ symmetric matrices.

In addition, we further introduce the following spaces defined on the filtered probability space $(\Omega, \mathcal F, \mathbb F, \mathbb P)$.
\begin{align*}
L^2_{\mathcal F_t}(\Omega;\mathbb R^n):=&\big\{\xi:\Omega\to\mathbb R^n\big\vert\xi\mbox{ is }\mathcal F_t\mbox{-measurable such that }\mathbb E[\vert\xi\vert^2]<\infty\big\}.\\
L^{\infty}_{\mathcal F_t}(\Omega;\mathbb R^n):=&\big\{\xi:\Omega\to\mathbb R^n\big\vert\xi\mbox{ is }\mathcal F_t\mbox{-measurable and essentially bounded}\big\}.\\
L^2_{\mathbb F}(t,T;\mathbb R^n):=&\bigg\{\phi:[t,T]\times\Omega\to\mathbb R^n\bigg\vert\phi\mbox{ is }\mathbb F\mbox{-adapted such that }\mathbb E\int_t^T\vert\phi_s\vert^2\, \mathrm ds<\infty\bigg\}.\\
L^{1,2}_{\mathbb F}(t,T;\mathbb R^n):=&\bigg\{\phi:[t,T]\times\Omega\to \mathbb R^n\bigg\vert\phi\mbox{ is }\mathbb F\mbox{-adapted such that }\\
&\mathbb E\left[\left(\int^T_t\vert \phi_s\vert\, \mathrm d t\right)^2\right]<\infty\bigg\}.\\
L^\infty_{\mathbb F}(t,T;\mathbb R^n):=&\big\{\phi:[t,T]\times\Omega\to \mathbb R^n\bigg\vert\phi\mbox{ is }\mathbb F\mbox{-adapted and essentially bounded}\big\}.\\
L^2_{\mathbb F}(\Omega;C(t,T;\mathbb R^n)):=&\bigg\{\phi:[t,T]\times\Omega\to\mathbb R^n\bigg\vert\phi\mbox{ is }\mathbb F\mbox{-adapted and pathwise continuous}\\
&\mbox{a.s. such that }\mathbb E\left[\sup_{s\in[t,T]}\vert\phi_s\vert^2\right]<\infty\bigg\}.
\end{align*}
Note that the admissible control set $\mathscr U_{\mathrm{ad}}[t,T]=L^2_{\mathbb F}(t,T;\mathbb R^m)$, which is defined in Section \ref{sec1}.
Moreover, we introduce the following product space as the solution space of the Hamiltonian system:
$$
\mathcal M[t,T]:= L^2_{\mathbb F}(\Omega;C(t,T;\mathbb R^n))\times L^2_{\mathbb F}(\Omega;C(t,T;\mathbb R^n))\times L^2_{\mathbb F}(t,T;\mathbb R^{nd})\times \mathscr U_{\mathrm {ad}}[t,T].
$$

In this section, the initial pair $(0,x)$ is fixed, and we would like to omit it for simplification. 
Moreover, we consider the case where all coefficients are stochastic, which is more general and will be beneficial later. 
Specifically, the following assumptions are introduced for Problem (LC)$_{(0,x)}$ with random coefficients. 
\begin{assumption}
  \label{A1}
The matrix valued stochastic processes $A, B, C, D$ are $\mathbb F$-adapted and essentially bounded. 
For any $(x,u)\in\mathbb R^{n+m}$, $g(x)$ is $\mathcal F_T$-measurable and $l(\cdot,x,u)$ is $\mathbb F$-adapted. 
For almost all $(t,\omega)\in[0,T]\times\Omega$, $g,l$ are continuously differentiable with respect to $(x,u)$. 
Moreover, all derivatives are uniformly Lipschitz continuous with respect to $(x,u)$, i.e., there exists a constant $K>0$ such that 
\begin{equation*}
\begin{aligned}
&\vert\mathrm D_xg(x_1)-\mathrm D_xg(x_2) \vert\leq K\vert x_1-x_2\vert,\\
&\vert\mathrm D_xl(t,x_1,u_1)-\mathrm D_xl(t,x_2,u_2)\vert+\vert\mathrm D_ul(t,x_1,u_1)-\mathrm D_ul(t,x_2,u_2)\vert\\
\leq& K(\vert x_1-x_2\vert+\vert u_1-u_2\vert),
\end{aligned}
\end{equation*}
for almost all $(t,\omega)\in[0,T]\times\Omega$ and any $(x_1,u_1),(x_2,u_2)\in\mathbb R^{n+m}$.
\end{assumption}
\begin{assumption}
  \label{A2}
The stochastic processes $b\in L^{1,2}_{\mathbb F}(0,T;\mathbb R^n)$, $\sigma\in L^2_{\mathbb F}(0,T;\mathbb R^{nd})$, $\mathrm D_xl(\cdot,0,0)\in L^{1,2}_{\mathbb F}(0,T;\mathbb R^n)$, $\mathrm D_ul(\cdot,0,0)\in L^2_{\mathbb F}(0,T;\mathbb R^m)$, 
the random variable $\mathrm D_xg(0)\in L^2_{\mathcal F_T}(\Omega,\mathbb R^n)$ and $\mathbb E[\vert g(0)\vert+\int_0^T \vert l(t,0,0)\vert \, \mathrm dt]$ exists and is finite.
\end{assumption}

Under Assumptions \ref{A1} and \ref{A2}, the controlled SDE \eqref{1:SDE0} admits a unique strong solution $X^{u}\in L^2_{\mathbb F}(\Omega;C(0,T;\mathbb R^n))$ for any admissible control $u\in\mathscr U_{\mathrm{ad}}[0,T]$. 
Therefore, the cost functional $J$ is well-defined by the following estimate:
\begin{equation}
\begin{aligned}
&\vert l(t,x,u)\vert\leq K\vert x\vert^2+K\vert u\vert^2+\vert\mathrm D_xl(t,0,0)\vert\vert x\vert+\vert\mathrm D_ul(t,0,0)\vert\vert u\vert+\vert l(t,0,0)\vert,\\
&\vert g(x)\vert\leq K\vert x\vert^2 +\vert\mathrm D_xg(0)\vert\vert x\vert+\vert g(0)\vert,
\end{aligned}
\end{equation}
where $K$ is the Lipschitz constant of $\mathrm D_xg,\mathrm D_xl,$ and $\mathrm D_ul$.

To find the optimal control, we define the Hamiltonian $\mathbb H:[0,T]\times\Omega\times\mathbb R^{n+n+nd+m}\to\mathbb R$ as follows:
\begin{equation}
\mathbb H(t,x,y,z,u)=\langle A(t)x+B(t)u+b(t),y\rangle+\sum_{i=1}^d\langle C_i(t)x+D_i(t)u+\sigma_i(t),z_i\rangle+l(t,x,u),
\end{equation}
where $z:=(z_1^\top,z_2^\top,...,z_d^\top)^\top$.
For any given $u\in\mathscr U_{\mathrm {ad}}[0,T]$, we introduce the following adjoint equation:
\begin{equation}
\label{2:BSDE}
\left\{
\begin{aligned}
\mathrm d Y^{u}_t=&-\mathrm D_x\mathbb H (t,X^{u}_t,Y^{u}_t,Z^{u}_t,u_t)\, \mathrm dt+\sum_{i=1}^{d}Z^{u,i}_t\, \mathrm dW^i_t,\quad t\in [0,T],\\
Y^{u}_T=&\mathrm D_x g(X^{u}_T),
\end{aligned}
\right.
\end{equation}
where we denote $Z^{u}:=((Z^{u,1})^\top,...,(Z^{u,d})^\top)^\top$.
Noting that 
$$
\mathrm D_x\mathbb H(t,x,y,z,u)=A(t)^\top y+C(t)^\top z+\mathrm D_xl(t,x,u),
$$
the above equation is a linear BSDE.
Under Assumptions \ref{A1} and \ref{A2}, $A^\top,C^\top$ are bounded,
$$
\mathbb E\left[\left(\int_0^T\vert \mathrm D_xl(t,X^u_t,u_t)\vert\, \mathrm dt\right)^2\right]<\infty\mbox{ and }\mathbb E\left[\vert \mathrm D_xg(X^u_T)\vert^2\right]<\infty.
$$
By the classical BSDE theory, the BSDE  \eqref{2:BSDE} admits a unique solution $(Y^{u},Z^{u})\in L^2_{\mathbb F}(\Omega;C(0,T;\mathbb R^n))\\\times L^2_{\mathbb F}(0,T;\mathbb R^{nd})$ for any admissible control $u\in\mathscr U_{\mathrm {ad}}[0,T]$. 

The following proposition shows that $J$ is Frech\'et differentiable. 
Since $\mathscr U_{\mathrm{ad}}[0,T]$ is a Hilbert space, its Frech\'et derivative should be a mapping from $\mathscr U_{\mathrm{ad}}[0,T]$ into itself by Riesz's representation theorem. 
Specifically, we are able to represent the Frech\'et derivative using $(Y^{u},Z^{u})$.
\begin{proposition}
\label{prop:fd}
Under Assumptions \ref{A1} and \ref{A2}, the cost functional $J$ is Frech\'et differentiable with respect to $u$, and its Frech\'et derivative at $u$ is represented as
\begin{equation}\label{prop:fd:1}
\mathcal D[u]_t=\mathrm D_u\mathbb H(t,X^{u}_t,Y^{u}_t,Z^{u}_t,u_t)=B(t)^\top Y^{u}_t+D(t)^\top Z^{u}_t+\mathrm D_ul(t,X^{u}_t,u_t),\quad t\in[0,T].
\end{equation}
Moreover, for any $v\in\mathscr{U}_{\mathrm{ad}}[0,T]$, the following equality holds:
\begin{equation}\label{prop:fd:2}
\begin{aligned}
\mathbb E\int_0^T\langle \mathcal D[u]_t,v_t-u_t\rangle\, \mathrm d t=&\mathbb E\Bigg[\langle \mathrm D_xg(X^u_T),X^v_T-X^u_T\rangle\\
&+\int_0^T\left\langle \begin{pmatrix}
\mathrm D_xl(t,X^u_t,u_t)\\
\mathrm D_ul(t,X^u_t,u_t)
\end{pmatrix},\begin{pmatrix}
X^{v}_t-X^u_t\\
v_t-u_t
\end{pmatrix}\right\rangle\, \mathrm d t\Bigg].
\end{aligned}
\end{equation}
\end{proposition}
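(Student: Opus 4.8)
The plan is to obtain the directional (Gâteaux) derivative of $J$ by a first-order expansion of $g$ and $l$, to identify it with the claimed representation through an It\^o/duality computation against the adjoint BSDE \eqref{2:BSDE}, and finally to promote it to a genuine Fr\'echet derivative via a remainder estimate. The whole argument is streamlined by the affine structure of \eqref{1:SDE0} in $(X,u)$: for any two controls $u,v\in\mathscr U_{\mathrm{ad}}[0,T]$, the difference $\delta X:=X^v-X^u$ solves \emph{exactly} (not merely to first order) the variational SDE
\begin{equation*}
\mathrm d(\delta X_t)=[A(t)\delta X_t+B(t)(v_t-u_t)]\,\mathrm dt+\sum_{i=1}^d[C_i(t)\delta X_t+D_i(t)(v_t-u_t)]\,\mathrm dW_t^i,\quad \delta X_0=0,
\end{equation*}
and satisfies the standard a priori bound $\mathbb E[\sup_{t\in[0,T]}|\delta X_t|^2]\le C\,\mathbb E\int_0^T|v_t-u_t|^2\,\mathrm dt$, with $C$ depending only on $T$ and the essential bounds of $A,B,C,D$. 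Writing $\delta u:=v-u$, there is thus no linearization error at the level of the state.

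Next I would expand $J(0,x;v)-J(0,x;u)$ by applying the fundamental theorem of calculus to $g(X^v_T)-g(X^u_T)$ and to $l(t,X^v_t,v_t)-l(t,X^u_t,u_t)$ along the segments joining the two arguments. The leading terms reproduce exactly the right-hand side of \eqref{prop:fd:2}, namely
\begin{equation*}
\mathbb E\Big[\langle\mathrm D_xg(X^u_T),\delta X_T\rangle+\int_0^T\big(\langle\mathrm D_xl(t,X^u_t,u_t),\delta X_t\rangle+\langle\mathrm D_ul(t,X^u_t,u_t),\delta u_t\rangle\big)\,\mathrm dt\Big],
\end{equation*}
while the remainders are of the form $\int_0^1\langle\mathrm D_xg(X^u_T+\theta\delta X_T)-\mathrm D_xg(X^u_T),\delta X_T\rangle\,\mathrm d\theta$ and its analogue for $l$, to be estimated in the last step.

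The core of the proof is the duality step. I would apply It\^o's formula to $\langle Y^u_t,\delta X_t\rangle$ on $[0,T]$, using the dynamics of $Y^u$ from \eqref{2:BSDE} together with $\mathrm D_x\mathbb H=A^\top y+C^\top z+\mathrm D_xl$. The terms carrying $A$ and $C$ cancel by the transpose identities $\langle A^\top Y^u,\delta X\rangle=\langle Y^u,A\delta X\rangle$ and $\sum_i\langle C_i^\top Z^{u,i},\delta X\rangle=\sum_i\langle Z^{u,i},C_i\delta X\rangle$, while the quadratic covariation of the two martingale parts contributes $\sum_i\langle Z^{u,i}_t,C_i\delta X_t+D_i\delta u_t\rangle\,\mathrm dt$. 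Taking expectations, using $\delta X_0=0$ and $Y^u_T=\mathrm D_xg(X^u_T)$, leaves
\begin{equation*}
\mathbb E[\langle\mathrm D_xg(X^u_T),\delta X_T\rangle]+\mathbb E\int_0^T\langle\mathrm D_xl(t,X^u_t,u_t),\delta X_t\rangle\,\mathrm dt=\mathbb E\int_0^T\langle B(t)^\top Y^u_t+D(t)^\top Z^u_t,\delta u_t\rangle\,\mathrm dt.
\end{equation*}
Substituting this back collapses the state-variation contributions in the expansion and yields both \eqref{prop:fd:2} and the representation $\mathcal D[u]_t=B^\top Y^u_t+D^\top Z^u_t+\mathrm D_ul(t,X^u_t,u_t)$ of \eqref{prop:fd:1}. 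Here I expect the one genuinely delicate point: since $\delta X$ and $Z^u$ are only $L^2$, the stochastic integrals $\int_0^\cdot\langle\delta X_t,Z^{u,i}_t\rangle\,\mathrm dW^i_t$ and $\int_0^\cdot\langle Y^u_t,C_i\delta X_t+D_i\delta u_t\rangle\,\mathrm dW^i_t$ are a priori only local martingales, so their expectations need not vanish without justification. I would handle this by the usual localization: stop at $\tau_k:=\inf\{t:|\delta X_t|+|Y^u_t|\ge k\}\wedge T$ so that each integral is a true martingale on $[0,\tau_k]$, then pass $k\to\infty$ using the $L^2$ bounds and dominated convergence.

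Finally I would verify Fr\'echet differentiability. The membership $\mathcal D[u]\in\mathscr U_{\mathrm{ad}}[0,T]=L^2_{\mathbb F}(0,T;\mathbb R^m)$ follows from the essential boundedness of $B,D$, from $Y^u\in L^2_{\mathbb F}(\Omega;C(0,T;\mathbb R^n))$ and $Z^u\in L^2_{\mathbb F}(0,T;\mathbb R^{nd})$, and from the linear growth of $\mathrm D_ul$ implied by Assumptions \ref{A1}--\ref{A2}; by Riesz's representation theorem this is the required Hilbert-space gradient. Using the Lipschitz continuity of $\mathrm D_xg,\mathrm D_xl,\mathrm D_ul$ and the variational bound, each remainder is $O\big(\mathbb E|\delta X_T|^2+\mathbb E\int_0^T(|\delta X_t|^2+|\delta u_t|^2)\,\mathrm dt\big)=O(\|v-u\|^2)=o(\|v-u\|)$, which upgrades the Gâteaux derivative to a Fr\'echet derivative. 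The main obstacle is therefore not the algebra but the two integrability matters flagged above — the localization making the It\^o expectation rigorous, and the uniform (over directions) control of the remainder coming from the merely Lipschitz, rather than $C^2$, hypothesis on the derivatives; the affine state dynamics and the $L^2$ variational estimate are precisely what make both go through.
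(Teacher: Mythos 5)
Your proposal is correct and follows essentially the same route as the paper's proof: a fundamental-theorem-of-calculus expansion of $J(v)-J(u)$ exploiting the affine state dynamics (so that $X^{u+\gamma(v-u)}=X^u+\gamma(X^v-X^u)$ exactly), the duality identity obtained by applying It\^o's formula to $\langle Y^u_t, X^v_t-X^u_t\rangle$ on $[0,T]$, and the Lipschitz-based remainder bound $\vert R_{v-u}\vert\le K\Vert v-u\Vert_{L^2}^2$ that upgrades the directional derivative to a Fr\'echet derivative. The only difference is one of rigor rather than substance: you spell out the localization by stopping times that justifies taking expectations of the stochastic integrals, and the $L^2$ membership of $\mathcal D[u]$, both of which the paper treats as standard and leaves implicit.
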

\begin{proof}
For any given $u,v\in\mathscr U_{\mathrm {ad}}[0,T]$, we have
\begin{equation}\label{pf:prop:fd:1}
\begin{aligned}
J(v)-J(u)=&\mathbb E\Bigg[g(X^{v}_T)-g(X^{u }_T)+ \int_0^T[l(t,X^v_t,v_t)-l(t,X^u_t,u_t)]\, \mathrm d t\Bigg]\\
=&\mathbb E\Bigg[\langle \mathrm D_xg(X^u_T),X^v_T-X^u_T\rangle\\
&+\int_0^T\left\langle \begin{pmatrix}
\mathrm D_xl(t,X^u_t,u_t)\\
\mathrm D_ul(t,X^u_t,u_t)
\end{pmatrix},\begin{pmatrix}
X^{v}_t-X^u_t\\
v_t-u_t
\end{pmatrix}\right\rangle\, \mathrm d t \Bigg]+R_{v-u}.
\end{aligned}
\end{equation}
By the linearity of the SDE \eqref{1:SDE0}, it is clear that $X^{u^\gamma}=X^u+\gamma(X^v-X^u)$ where we denote $u^\gamma:=u+\gamma(v-u)$ for any $\gamma\in[0,1]$.
Therefore, the remainder term $R_{v-u}$ can be represented as
\begin{equation}
\begin{aligned}
R_{v-u}=&\mathbb E\Bigg[\int_0^1\langle \mathrm D_xg(X^{u^\gamma}_T)-\mathrm D_xg(X^u_T),X^v_T-X^u_T\rangle\, \mathrm d\gamma\\
&+\int_0^T\int_0^1\left\langle 
\begin{pmatrix}
\mathrm D_xl(t,X^{u^\gamma}_t,u^\gamma_t)-\mathrm D_xl(t,X^u_t,u_t)\\
\mathrm D_ul(t,X^{u^\gamma}_t,u^\gamma_t)-\mathrm D_ul(t,X^u_t,u_t)
\end{pmatrix},\begin{pmatrix}
X^{v}_t-X^u_t\\
v_t-u_t
\end{pmatrix}\right\rangle\, \mathrm d\gamma\, \mathrm d t \Bigg].
\end{aligned}
\end{equation}
Using the standard estimate for SDEs, there exists a constant $K>0$ such that
\begin{equation}\label{pf:prop:fd:2}
\mathbb E\Big[\sup_{0\leq t\leq T}\vert X^v_t-X^u_t\vert^2\Big]\leq K\mathbb E\int_0^T\vert  v_t-u_t\vert^2\, \mathrm d t.
\end{equation}
Therefore, by the Lipschitz continuity of $\mathrm D_xg,\mathrm D_xl,\mathrm D_ul$, we obtain
\begin{equation}
\begin{aligned}
\vert R_{v-u}\vert\leq&K\mathbb E\Bigg[\int_0^1\gamma \vert X^v_T-X^u_T\vert^2\, \mathrm d\gamma+\int_0^T\int_0^1\gamma\left[\vert X^v_t-X^u_t\vert +\vert v_t-u_t\vert \right]^2\, \mathrm d\gamma\, \mathrm d t \Bigg]\\
\leq& K\mathbb E\Bigg[\sup_{0\leq t\leq T}\vert X^v_t-X^u_t\vert^2+\int_0^T\vert v_t-u_t\vert^2\, \mathrm d t \Bigg]\leq K\Vert  v-u\Vert_{L^2}^2,
\end{aligned}
\end{equation}
where $\Vert v-u\Vert_{L^2}:=\left(\mathbb E\int_0^T\vert  v_s-u_s\vert^2\, \mathrm d s\right)^{\frac12}$ is the $L^2$ norm of $(v-u)$.

Applying It\^o's formula to $\langle X^{v}_t-X^u_t,Y^u_t\rangle$ on $[0,T]$ leads to 
\begin{equation}\label{pf:prop:fd:3}
\begin{aligned}
 &\mathbb E\Bigg[\langle \mathrm D_xg(X^u_T),X^v_T-X^u_T\rangle+\int_0^T\langle \mathrm D_xl(t,X^u_t,u_t),X^{v}_t-X^u_t\rangle\, \mathrm d t\Bigg]\\
 =&\mathbb E \int_0^T\langle v_t-u_t,B(t)^\top Y^u_t+D(t)^\top Z^u_t\rangle \, \mathrm d t.
\end{aligned}
\end{equation}
Comparing to \eqref{pf:prop:fd:1}, we deduce 
\begin{equation}
J(v)-J(u)=\mathbb E\int_0^T\langle \mathrm D_u\mathbb H(t,X^{u}_t,Y^{u}_t,Z^{u}_t,u_t),v_t-u_t\rangle\, \mathrm d t+R_{v-u},
\end{equation}
which leads to
$$
\begin{aligned}
&\lim_{\Vert v-u\Vert_{L^2}\to 0}~~\frac {\left\vert J(v)-J(u)-\mathbb E\int_0^T\langle\mathrm D_u\mathbb H(t,X^{u}_t,Y^{u}_t,Z^{u}_t,u_t),v_t-u_t \rangle\, \mathrm d t \right\vert}{\Vert v-u\Vert_{L^2}}\\
=&\lim_{\Vert v-u\Vert_{L^2}\to 0}~~\frac {\vert R_{v-u}\vert}{\Vert v-u\Vert_{L^2}}=0.
\end{aligned}
$$
Consequently, $J$ is Frech\'et differentiable and its Frech\'et derivative is given by \eqref{prop:fd:1}. 
The equality \eqref{prop:fd:2} comes from \eqref{pf:prop:fd:3}.
The proof is complete.
\end{proof}

For the given initial pair $(0,x)$, the cost functional $J$ is called convex, if the mapping $u\mapsto J(u)$ is convex.
The cost functional $J$ is termed uniformly convex, if there exists a constant $\delta >0$ such that $J(u)-\frac\delta 2\Vert u\Vert^2_{L^2}$ is convex.
For the sufficient condition of uniform convexity, we have introduced Case 1 and Case 2 in Page \pageref{case}.
The proof is similar to the LQ case (see Sun et al. \cite[Proposition 7.1]{sxy21} for example).

The following lemma provides a characterization for the convexity of cost functional via its Frech\'et derivative $\mathcal D$.
\begin{lemma}
\label{lem:1oc}
Let Assumptions \ref{A1} and \ref{A2} hold. Then,

(i) The cost functional $J$ is convex if and only if, for any $u,v\in\mathscr U_{\mathrm{ad}}[0,T]$,
\begin{equation}
\label{lem:1oc:1}
\mathbb E\int_0^T\langle \mathcal D[u]_t-\mathcal D[v]_t ,u_t-v_t\rangle\, \mathrm d t\geq 0.
\end{equation}

(ii) The cost functional $J$ is uniformly convex if and only if there exists a constant $\delta>0$ such that, for any $u,v\in\mathscr U_{\mathrm{ad}}[0,T]$,
\begin{equation}
\label{lem:1oc:2}
\mathbb E\int_0^T\langle \mathcal D[u]_t-\mathcal D[v]_t ,u_t-v_t\rangle\, \mathrm d t\geq \delta\mathbb E\int_0^T\vert  u_t-v_t\vert^2\, \mathrm d t.
\end{equation}
\end{lemma}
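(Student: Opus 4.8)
The plan is to treat the claim as the standard correspondence, on a Hilbert space, between convexity of a Fr\'echet differentiable functional and monotonicity of its gradient, specialized to $\mathscr U_{\mathrm{ad}}[0,T]=L^2_{\mathbb F}(0,T;\mathbb R^m)$ equipped with the inner product $\langle u,v\rangle_{L^2}:=\mathbb E\int_0^T\langle u_t,v_t\rangle\,\mathrm dt$. Under this identification the left-hand side of \eqref{lem:1oc:1} is exactly $\langle \mathcal D[u]-\mathcal D[v],\,u-v\rangle_{L^2}$, and Proposition \ref{prop:fd} already supplies that $J$ is Fr\'echet (hence G\^ateaux) differentiable with gradient $\mathcal D[\cdot]\in\mathscr U_{\mathrm{ad}}[0,T]$. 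The entire statement will be reduced to the elementary one-dimensional fact that a differentiable function on $[0,1]$ is convex if and only if its derivative is nondecreasing.

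For part (i), fix $u,v\in\mathscr U_{\mathrm{ad}}[0,T]$ and introduce the scalar function $\phi(\gamma):=J(u^\gamma)$, where $u^\gamma:=u+\gamma(v-u)$ for $\gamma\in[0,1]$ (the same path used in the proof of Proposition \ref{prop:fd}). By G\^ateaux differentiability and the chain rule, $\phi$ is differentiable with $\phi'(\gamma)=\langle\mathcal D[u^\gamma],\,v-u\rangle_{L^2}$, and $J$ is convex if and only if $\phi$ is convex for every pair $(u,v)$. For the implication ($\Leftarrow$), assuming \eqref{lem:1oc:1} I would show $\phi'$ is nondecreasing: for $0\le \gamma_1<\gamma_2\le 1$, writing $u^{\gamma_2}-u^{\gamma_1}=(\gamma_2-\gamma_1)(v-u)$, the hypothesis applied to the pair $(u^{\gamma_2},u^{\gamma_1})$ gives $\phi'(\gamma_2)-\phi'(\gamma_1)=\frac{1}{\gamma_2-\gamma_1}\langle\mathcal D[u^{\gamma_2}]-\mathcal D[u^{\gamma_1}],\,u^{\gamma_2}-u^{\gamma_1}\rangle_{L^2}\ge 0$, whence $\phi$, and therefore $J$, is convex. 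For the implication ($\Rightarrow$), convexity of $J$ makes each $\phi$ convex and differentiable, so $\phi'(1)\ge\phi'(0)$, which reads $\langle\mathcal D[v]-\mathcal D[u],\,v-u\rangle_{L^2}\ge 0$, i.e.\ precisely \eqref{lem:1oc:1}.

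For part (ii), I would apply part (i) to the shifted functional $\tilde J(u):=J(u)-\frac\delta2\Vert u\Vert_{L^2}^2$. Since $u\mapsto\frac\delta2\Vert u\Vert_{L^2}^2$ is Fr\'echet differentiable with gradient $\delta u$, the functional $\tilde J$ is Fr\'echet differentiable with gradient $\tilde{\mathcal D}[u]=\mathcal D[u]-\delta u$. By definition $J$ is uniformly convex with modulus $\delta$ exactly when $\tilde J$ is convex, so part (i) applied to $\tilde J$ equates convexity of $\tilde J$ with $\langle\mathcal D[u]-\delta u-(\mathcal D[v]-\delta v),\,u-v\rangle_{L^2}\ge0$ for all $u,v$, which rearranges to \eqref{lem:1oc:2}; the converse is immediate since \eqref{lem:1oc:2} is exactly the monotonicity of $\tilde{\mathcal D}$.

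The only genuinely analytic points, which I expect to be the minor but essential obstacle, are the justification that $\phi$ is differentiable with $\phi'(\gamma)=\langle\mathcal D[u^\gamma],\,v-u\rangle_{L^2}$ and the one-dimensional convexity criterion. The former follows from Fr\'echet differentiability of $J$ composed with the affine path $\gamma\mapsto u^\gamma$; the latter follows from the mean value theorem and needs only differentiability of $\phi$, not continuity of $\phi'$. No $L^p$ estimates or pathwise regularity beyond what Proposition \ref{prop:fd} already provides are required, so the argument is purely convex-analytic.
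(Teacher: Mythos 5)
Your proof is correct and takes essentially the same route as the paper: for part (i) the paper simply cites Ekeland and Temam (Chapter 1, Proposition 5.5), whose proof is precisely the one-dimensional reduction and derivative-monotonicity argument you spell out, and for part (ii) the paper likewise obtains the result from (i) via the definition of uniform convexity, i.e.\ the shift $\tilde J(u)=J(u)-\frac{\delta}{2}\Vert u\Vert_{L^2}^2$ with gradient $\mathcal D[u]-\delta u$, exactly as you do.
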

One could refer to Ekeland and Temam \cite[Chapter 1, Proposition 5.5]{et76} for (i).
It is direct to obtain (ii) by (i) using the definition of uniform convexity.

When the cost functional is assumed to be uniformly convex, the optimal control should satisfy the stationary condition $\mathcal D[u]=0$. 
Therefore, combining the controlled SDE, the adjoint BSDE and the stationary condition, the following nonlinear Hamiltonian system is constructed to characterize the optimal control:
\begin{equation}\label{2:HSo}
\left\{
\begin{aligned}
&\mathrm d\bar X_t= \big[A(t)\bar X_t+B(t)\bar u_t+b(t)\big]\, \mathrm dt +\sum_{i=1}^d \big[C_i(t)\bar X_t +D_i(t)\bar u_t+\sigma_i(t)\big]\, \mathrm dW^i_t,\\
&\mathrm d \bar Y_t=-\left[A(t)^\top \bar Y_t +C(t)^\top \bar Z_t+\mathrm D_xl(t,\bar X_t,\bar u_t)\right]\, \mathrm dt+\sum_{i=1}^{d}\bar Z_t^i\, \mathrm dW_t^i,\\
&\mathrm D_ul(t,\bar X_t,\bar u_t)+B(t)^\top \bar Y_t+D(t)^\top\bar Z_t=0,\quad t\in [0,T],\\
& \bar X_0 = x, ~~~~~~~~\bar Y_T=\mathrm D_x g(\bar X_T).
\end{aligned}
\right.
\end{equation} 
\begin{remark}
Specially, let
\begin{equation}  \label{2:lqc}
\begin{aligned}
g(x)=&\frac12\langle Gx,x\rangle+\langle r,x\rangle,\\
l(t,x,u)=&\frac12\left\langle \begin{pmatrix}
Q(t)& S(t)^\top\\
S(t)& R(t)
\end{pmatrix}\begin{pmatrix}
x\\
u
\end{pmatrix},\begin{pmatrix}
x\\
u
\end{pmatrix}\right\rangle+\left\langle \begin{pmatrix}
q(t)\\
\rho(t)
\end{pmatrix},\begin{pmatrix}
x\\
u
\end{pmatrix}\right\rangle,
\end{aligned}
\end{equation}
where the random variables $G,r$ are $\mathcal F_T$-measurable, the stochastic processes $Q,S,R,q,\rho$ are $\mathbb F$-adapted, i.e., the LQ case.
It is obvious that
$$
\begin{aligned}
\mathrm D_x g(x)&=Gx+r,\\
\mathrm D_x l(t,x,u)&=Q(t)x+S(t)^\top u+q(t),\\
\mathrm D_u l(t,x,u)&=S(t)x+R(t)u+\rho(t).
\end{aligned}
$$
Accordingly, the above nonlinear Hamiltonian system \eqref{2:HSo} is reduced to the following linear one:
\begin{equation}\label{2:hslq}
\left\{
\begin{aligned}
&\mathrm d\bar X_s= \big[A(s)\bar X_s+B(s)\bar u_s+b(s)\big]\, \mathrm ds +\sum_{i=1}^d \big[C_i(s)\bar X_s +D_i(s)\bar u_s+\sigma_i(s)\big]\, \mathrm dW^i_s,\\
&\mathrm d \bar Y_s=-\left[A(s)^\top \bar Y_s +C(s)^\top \bar Z_s+Q(s)\bar X_s+S(s)^\top\bar u_s+q(s)\right]\, \mathrm ds+\sum_{i=1}^{d}\bar Z_s^{t,\xi,i}\, \mathrm dW_s^i,\\
& R(s)\bar u_s+S(s)\bar X_s+B(s)^\top \bar Y_s+D(s)^\top\bar Z_s+\rho(s)=0,\quad s\in [t,T],\\
& \bar X_t =\xi, ~~~~~~~~\bar Y_T=G\bar X_T+r.
\end{aligned}
\right.
\end{equation} 
The above Hamiltonian system \eqref{2:hslq} will be useful in the next section.
\end{remark}

Note that the Frech\'et derivative is an extension of gradients in infinite dimensional spaces.
Combining Proposition \ref{prop:fd} and Lemma \ref{lem:1oc}, we are able to generalize the gradient descent method to the Hilbert space $\mathscr U_{\mathrm{ad}}[0,T]$.
Consequently, the unique optimal control $\bar u$ is obtained and the well-posedness of the Hamiltonian system \eqref{2:HSo} is proven.

\begin{theorem}
\label{th:wp}
Let \ref{A1} and \ref{A2} hold and $J$ be uniformly convex. 
Then the Hamiltonian system \eqref{2:HSo} admits a unique solution $(\bar X,\bar Y,\bar Z,\bar u)\in\mathcal M[0,T]$. 
Moreover, $\bar u$ is the unique optimal control of Problem (LC)$_{(0,x)}$. 
Furthermore, there is a constant $K>0$ only depending on $T,\delta$, the Lipschitz constant of $\mathrm D_xg,\mathrm D_xl,\mathrm D_ul$ and the bound of $A,B,C,D$ such that
\begin{equation}\label{th:wp:1}
\mathbb E\bigg[\sup_{0\leq t\leq T}\vert \bar X_t\vert^2+ \sup_{0\leq t\leq T}\vert\bar Y_t\vert^2+\int_0^T\vert \bar Z_t\vert^2\, \mathrm dt+\int_0^T\vert \bar u_t\vert^2\, \mathrm dt\bigg]
\leq KI,
\end{equation} 
where 
\begin{equation*}
\begin{aligned}
I=&\vert  x\vert^2+\mathbb E\bigg[\vert  \mathrm D_xg(0)\vert^2 +\left(\int_0^T\vert  b(t)\vert\, \mathrm dt\right)^2+\left(\int_0^T\vert\mathrm D_xl(t,0,0)\vert\, \mathrm dt\right)^2\\
&+ \int_0^T [\vert  \sigma(t)\vert^2+\vert \mathrm D_u l(t,0,0)\vert^2]\, \mathrm dt\bigg].
\end{aligned}
\end{equation*} 
\end{theorem}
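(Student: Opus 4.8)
The plan is to recast the Hamiltonian system as a fixed-point problem for the gradient-descent map $\mathcal{T}[u]:=u-\eta\mathcal{D}[u]$ and to invoke the Banach fixed-point theorem on the Hilbert space $\mathscr{U}_{\mathrm{ad}}[0,T]$. First I would record the equivalence: solving \eqref{2:HSo} is the same as finding $u\in\mathscr{U}_{\mathrm{ad}}[0,T]$ with $\mathcal{D}[u]=0$. Indeed, for each admissible $u$ the forward SDE \eqref{1:SDE0} and the adjoint BSDE \eqref{2:BSDE} are uniquely solvable (as recorded before the statement), so $(\bar X,\bar Y,\bar Z)$ is completely determined by $\bar u$, and the algebraic stationarity line in \eqref{2:HSo} is precisely $\mathcal{D}[\bar u]_t=0$ by the representation \eqref{prop:fd:1}. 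Thus a unique zero of $\mathcal{D}$ produces a unique element of $\mathcal{M}[0,T]$. Moreover, since $J$ is convex and Fréchet differentiable, $\mathcal{D}[\bar u]=0$ is equivalent to $\bar u$ being a global minimizer (by Fermat's rule and the convexity inequality $J(v)\ge J(\bar u)+\mathbb{E}\int_0^T\langle\mathcal{D}[\bar u]_t,v_t-\bar u_t\rangle\,\mathrm dt$), and uniform convexity makes this minimizer unique; hence the zero of $\mathcal{D}$ is exactly the unique optimal control.

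Next I would establish that $\mathcal{T}$ is a contraction for a suitable $\eta>0$. Expanding the $L^2$ norm gives
\[
\Vert\mathcal{T}[u]-\mathcal{T}[v]\Vert_{L^2}^2=\Vert u-v\Vert_{L^2}^2-2\eta\,\mathbb{E}\!\int_0^T\!\langle\mathcal{D}[u]_t-\mathcal{D}[v]_t,u_t-v_t\rangle\,\mathrm dt+\eta^2\Vert\mathcal{D}[u]-\mathcal{D}[v]\Vert_{L^2}^2.
\]
The middle term is bounded below using the strong monotonicity \eqref{lem:1oc:2} of Lemma \ref{lem:1oc}(ii), which supplies the factor $\delta$. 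For the last term I need a Lipschitz estimate $\Vert\mathcal{D}[u]-\mathcal{D}[v]\Vert_{L^2}\le L\Vert u-v\Vert_{L^2}$; this follows from the forward stability estimate \eqref{pf:prop:fd:2} together with the standard $L^2$ stability estimate for the linear adjoint BSDE \eqref{2:BSDE}, using the uniform Lipschitz continuity of $\mathrm D_xg,\mathrm D_xl,\mathrm D_ul$ from Assumption \ref{A1} and the boundedness of $A,B,C,D$, so that $L$ depends only on $T$, those Lipschitz constants, and those bounds. Combining,
\[
\Vert\mathcal{T}[u]-\mathcal{T}[v]\Vert_{L^2}^2\le\big(1-2\eta\delta+\eta^2 L^2\big)\Vert u-v\Vert_{L^2}^2,
\]
and choosing $\eta\in(0,2\delta/L^2)$, for instance $\eta=\delta/L^2$, gives a contraction constant strictly less than one. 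The Banach fixed-point theorem then yields a unique fixed point $\bar u$, and $\bar u=\mathcal{T}[\bar u]$ forces $\mathcal{D}[\bar u]=0$, completing the existence and uniqueness part.

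It remains to prove the a priori estimate \eqref{th:wp:1}. I would first control $\Vert\bar u\Vert_{L^2}$ by exploiting monotonicity again: applying \eqref{lem:1oc:2} with $v=0$ and using $\mathcal{D}[\bar u]=0$ gives $\delta\Vert\bar u\Vert_{L^2}^2\le-\mathbb{E}\int_0^T\langle\mathcal{D}[0]_t,\bar u_t\rangle\,\mathrm dt\le\Vert\mathcal{D}[0]\Vert_{L^2}\Vert\bar u\Vert_{L^2}$, whence $\Vert\bar u\Vert_{L^2}\le\delta^{-1}\Vert\mathcal{D}[0]\Vert_{L^2}$. Evaluating $\mathcal{D}[0]$ along the SDE/BSDE solved at $u\equiv0$ and invoking the growth bounds on $g,l$ recorded before the statement together with Assumption \ref{A2} bounds $\Vert\mathcal{D}[0]\Vert_{L^2}^2\le KI$, so $\Vert\bar u\Vert_{L^2}^2\le KI$. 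Feeding this back into the standard SDE moment estimate for $\bar X$ and the standard BSDE estimate for $(\bar Y,\bar Z)$ produces \eqref{th:wp:1} with a constant of the asserted dependence.

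The step I expect to be the main obstacle is the Lipschitz estimate for $\mathcal{D}$, namely the backward stability of $(Y^u,Z^u)$ with respect to $u$: one must propagate the forward perturbation $X^v-X^u$ through the linear adjoint BSDE while keeping the constant explicit in the stated quantities, which requires a careful $L^2$ a priori estimate accommodating the $L^{1,2}$-type integrability of the driver data in Assumption \ref{A2}. Everything else reduces to convex analysis on a Hilbert space and routine SDE/BSDE moment bounds.
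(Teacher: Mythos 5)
Your proposal is correct and follows essentially the same route as the paper: the gradient-descent map $\mathcal T[u]=u-\eta\mathcal D[u]$, the Lipschitz bound on $\mathcal D$ from standard SDE/BSDE stability, the monotonicity \eqref{lem:1oc:2} to get the contraction factor $1-2\eta\delta+\eta^2K$, and the a priori bound on $\bar u$ from $\mathcal D[\bar u]=0$. Your only deviations are cosmetic and valid: you bound $\delta\Vert\bar u\Vert_{L^2}\le\Vert\mathcal D[0]\Vert_{L^2}$ directly by Cauchy--Schwarz (which works since \ref{A1}--\ref{A2} indeed give $\Vert\mathcal D[0]\Vert_{L^2}^2\le KI$), where the paper instead expands $\mathbb E\int_0^T\langle\mathcal D[0]_t,\bar u_t\rangle\,\mathrm dt$ via \eqref{prop:fd:2} and Young's inequality, and you derive optimality from the first-order convexity inequality rather than the paper's integral representation $J(v)-J(\bar u)=\mathbb E\int_0^1\int_0^T\langle\mathcal D[\bar u+\gamma(v-\bar u)]_t-\mathcal D[\bar u]_t,v_t-\bar u_t\rangle\,\mathrm dt\,\mathrm d\gamma$.
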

\begin{proof}
Firstly, we would like to prove the operator $\mathcal D$ is uniformly Lipschitz continuous with respect to $u$. 
By the standard estimate for BSDEs and \eqref{pf:prop:fd:2}, there exists a constant $K>0$ such that, for any $u,v\in\mathscr U_{\mathrm {ad}}[0,T]$, 
\begin{equation}
\mathbb E\bigg[\sup_{0\leq t\leq T}\vert Y^u_t-Y^v_t\vert^2+\int_0^T\vert Z^u_t-Z^v_t\vert^2\, \mathrm d t\bigg]\leq K\mathbb E\int_0^T\vert u_t-v_t\vert^2\, \mathrm d t.
\end{equation} 
Therefore, we obtain
\begin{equation}\label{pf:th:wp:1}
\begin{aligned}
\mathbb E \int_0^T\vert \mathcal D[u]_t-\mathcal D[v]_t\vert^2\, \mathrm d t \leq &K\mathbb E\int_0^T[\vert X^u_t-X^v_t\vert^2+\vert u_t-v_t\vert^2\\
&+\vert Y^u_t-Y^v_t\vert^2+\vert Z^u_t-Z^v_t\vert^2]\, \mathrm d t\\
\leq&K\mathbb E\int_0^T\vert u_t-v_t\vert^2\, \mathrm d t .
\end{aligned}
\end{equation} 

Secondly, we generalize the gradient descent method to the Hilbert space $\mathscr U_{\mathrm{ad}}[0,T]$ to find the optimal control.
Let $\eta>0$ be undetermined. 
For any $u\in\mathscr U_{\mathrm {ad}}[0,T]$, we construct a mapping
$$
\mathcal T[u]:=u -\eta \mathcal D[u],
$$
which maps $ \mathscr U_{\mathrm {ad}}[0,T]$ into itself. 
By Lemma \ref{lem:1oc} and \eqref{pf:th:wp:1}, we derive that, for any $u,v\in\mathscr U_{\mathrm {ad}}[0,T]$,
\begin{equation}\label{pf:th:wp:2}
\begin{aligned}
&\Vert\mathcal T[u]-\mathcal T[v]\Vert^2_{L^2}=\mathbb E\int_0^T\big\vert u_t-v_t-\eta (\mathcal D[u]_t-\mathcal D[v]_t)\big\vert^2\, \mathrm d t\\
=&\mathbb E\int_0^T\big[\vert u_t-v_t\vert^2-2\eta\langle \mathcal D[u]_t-\mathcal D[v]_t,u_t-v_t\rangle +\eta^2\vert \mathcal D[u]_t-\mathcal D[v]_t\vert^2\big]\, \mathrm d t \\
\leq&(1-2\eta\delta+\eta^2K)\Vert u-v\Vert^2_{L^2}.
\end{aligned}
\end{equation} 
Letting $\eta=\delta/K$ leads to $1-2\eta\delta+\eta^2K=1-\delta^2/K<1$. 
By Banach's contraction mapping principle, the mapping $\mathcal T$ admits a unique fixed point $\bar u$. 
By $\mathcal T(\bar u)=\bar u$, we obtain
$$
\mathcal D[\bar u]_t=\mathrm D_ul(t,\bar X_t,\bar u_t)+B(t)^\top \bar Y_t+D(t)^\top \bar Z_t=0,
$$
where $(\bar X,\bar Y,\bar Z):=(X^{\bar u},Y^{\bar u},Z^{\bar u})$. 
Then, the quadruple $(\bar X,\bar Y,\bar Z,\bar u)$ is the unique solution of the Hamiltonian system \eqref{2:HSo}. 

Thirdly, we would like to prove the estimate \eqref{th:wp:1}. 
By the standard estimates for SDEs and BSDEs, there exists a constant $K>0$ such that for any $u\in\mathscr U_{\mathrm{ad}}[0,T]$,
\begin{equation}\label{pf:th:wp:3}
\mathbb E\bigg[\sup_{0\leq t\leq T}\vert X_t^u\vert^2+ \sup_{0\leq t\leq T}\vert Y_t^u\vert^2+\int_0^T\vert Z_t^u\vert^2\, \mathrm dt\bigg]
\leq KI+K\mathbb E\int_0^T\vert u_t\vert^2\, \mathrm dt.
\end{equation} 
Let $(X^0,Y^0,Z^0)$ be the solution to the case of $u\equiv0$. 
Replacing $u$ and $v$ by $0$ and $\bar u$ in \eqref{prop:fd:2}, respectively, we deduce
\begin{equation*}
\begin{aligned}
 &\mathbb E\int_0^T\langle\mathcal D[0]_t,\bar u_t\rangle \mathrm d t\\
=&\mathbb E\left[\langle\mathrm D_xg(X^0_T),\bar X_T-X^0_T\rangle+\int_0^T[\langle\mathrm D_xl(t,X^0_t,0),\bar X_t-X^0_t\rangle+\langle\mathrm D_ul(t,X^0_t,0),\bar u_t\rangle] \mathrm d t\right].
\end{aligned}
\end{equation*}
Since $\mathcal D[\bar u]=0$, by Lemma \ref{lem:1oc}, we obtain
\begin{equation*}
\begin{aligned}
\delta \mathbb E\int_0^T\vert\bar u_t\vert^2\, \mathrm d t\leq&\mathbb E\int_0^T\langle\mathcal D[0]_t-\mathcal D[\bar u]_t,0-\bar u_t\rangle \mathrm d t\\
=&-\mathbb E\bigg[\langle\mathrm D_xg(X^0_T),\bar X_T-X^0_T\rangle+\int_0^T[\langle\mathrm D_xl(t,X^0_t,0),\bar X_t-X^0_t\rangle\\
&+\langle\mathrm D_ul(t,X^0_t,0),\bar u_t\rangle] \mathrm d t\bigg]\\
\leq&\mathbb E\Bigg[\left(\vert\mathrm D_xg(X^0_T)\vert+\int_0^T\vert\mathrm D_xl(t,X^0_t,0)\vert\, \mathrm d t\right)\left(\sup_{0\leq t\leq T}\vert \bar X_t-X^0_t\vert\right)\\
&+\left(\int_0^T\vert\mathrm D_ul(t,X^0_t,0)\vert^2\, \mathrm d t\right)^{1/2}\left(\int_0^T\vert\bar u_t\vert^2\, \mathrm d t\right)^{1/2}\Bigg].
\end{aligned}
\end{equation*}
By Young's inequality, \eqref{pf:prop:fd:2} and \eqref{pf:th:wp:3}, we deduce that, for any $\varepsilon>0$, there exists a constant $K_{\varepsilon}>0$ depending on $\varepsilon$ such that
\begin{equation*}
\delta \mathbb E\int_0^T\vert\bar u_t\vert^2\, \mathrm d t\leq\varepsilon\mathbb E\int_0^T\vert\bar u_t\vert^2\, \mathrm d t+K_{\varepsilon}I.
\end{equation*}
Taking $\varepsilon=\delta/2$, we get
\begin{equation*}
\mathbb E\int_0^T\vert\bar u_t\vert^2\, \mathrm d t\leq KI.
\end{equation*}
By \eqref{pf:th:wp:3} again, we conclude \eqref{th:wp:1}.

Finally, we prove that $\bar u$ is the unique optimal control. 
For any $v\in\mathscr U_{\mathrm {ad}}[0,T]$, we have
\begin{equation}
\begin{aligned}
J(v)-J(\bar u)=&\mathbb E\int_0^1\int_0^T\langle \mathcal D[\bar u+\gamma (v-\bar u)]_t,v_t-\bar u_t\rangle\, \mathrm d t \, \mathrm d\gamma\\
=&\mathbb E\int_0^1\int_0^T\langle \mathcal D[\bar u+\gamma (v-\bar u)]_t-\mathcal D[\bar u]_t,v_t-\bar u_t \rangle\, \mathrm d t \, \mathrm d\gamma.
\end{aligned}
\end{equation}
By Lemma \ref{lem:1oc}, we have 
\begin{equation}
J(v)-J(\bar u)\geq\delta\left(\int_0^1\gamma\, \mathrm d\gamma\right)\left(\mathbb E\int_0^T\vert v_t-\bar u_t\vert^2\, \mathrm d t\right) =\frac{\delta}2\mathbb E\int_0^T\vert v_t-\bar u_t\vert^2\, \mathrm d t,
\end{equation}
which shows that $\bar u$ is the unique optimal control. The proof is completed.
\end{proof}

It is natural to extend the initial pair $(t,x)\in[0,T]\times \mathbb R^n$ to $(t,\xi)\in[0,T]\times L^2_{\mathcal F_t}(\Omega;\mathbb R^n)$ with compatible notations for $J(t,\xi;u)$ and $X^{t,\xi,u}$.
In detail, for any given $(t,\xi)\in[0,T]\times L^2_{\mathcal F_t}(\Omega;\mathbb R^n)$ and $u\in\mathscr U_{\mathrm{ad}}[t,T]$, the cost functional is defined as
\begin{equation} 
J(t,\xi;u)=\mathbb E\left[g(X^{t,\xi,u}_T)+\int_t^Tl(s,X^{t,\xi,u}_s,u_s)\, \mathrm ds\right],
\end{equation} 
where $X^{t,\xi,u}$ satisfies the following linear SDE:
\begin{equation}
\left\{
\begin{aligned}
\mathrm dX^{t,\xi,u}_s=& \big[A(s)X^{t,\xi,u}_s+B(s)u_s+b(s)\big] \, \mathrm ds\\
&+\sum_{i=1}^d \big[C_i(s)X^{t,\xi,u}_s+D_i(s)u_s+\sigma_i(s)\big]\, \mathrm dW^i_s,\quad s\in [t,T],\\
X^{t,\xi,u}_t =& \xi.
\end{aligned}
\right.
\end{equation}
Correspondingly, it is straightforward to generalize Theorem \ref{th:wp} to the case where the initial pair $(t,\xi)\in[0,T]\times L^2_{\mathcal F_t}(\Omega;\mathbb R^n)$ using the same method. 
Specifically, if the mapping $u\mapsto J(t,\xi;u)$ is uniformly convex for the given initial pair $(t,\xi)\in[0,T]\times L^2_{\mathcal F_t}(\Omega;\mathbb R^n)$, the following Hamiltonian system admits a unique solution $(\bar X^{t,\xi},\bar Y^{t,\xi},\bar Z^{t,\xi},\bar u^{t,\xi})\in\mathcal M[t,T]$:

\begin{equation}\label{2:hst}
\left\{
\begin{aligned}
&\mathrm d\bar X_s^{t,\xi}= \big[A(s)\bar X_s^{t,\xi}+B(s)\bar u_s^{t,\xi}+b(s)\big]\, \mathrm ds +\sum_{i=1}^d \big[C_i(s)\bar X_s^{t,\xi} +D_i(s)\bar u_s^{t,\xi}+\sigma_i(s)\big]\, \mathrm dW^i_s,\\
&\mathrm d \bar Y_s^{t,\xi}=-\left[A(s)^\top \bar Y_s^{t,\xi} +C(s)^\top \bar Z_s^{t,\xi}+\mathrm D_xl(s,\bar X_s^{t,\xi},\bar u_s^{t,\xi})\right]\, \mathrm ds+\sum_{i=1}^{d}\bar Z_s^{t,\xi,i}\, \mathrm dW_s^i,\\
&\mathrm D_ul(s,\bar X_s^{t,\xi},\bar u_s^{t,\xi})+B(s)^\top \bar Y_s^{t,\xi}+D(s)^\top\bar Z_s^{t,\xi}=0,\quad s\in [t,T],\\
& \bar X_t^{t,\xi} =\xi, ~~~~~~~~\bar Y_T^{t,\xi}=\mathrm D_x g(\bar X_T^{t,\xi}).
\end{aligned}
\right.
\end{equation} 
Moreover, $\bar u^{t,\xi}$ is the unique optimal control such that 
$$
J(t,\xi;\bar u^{t,\xi})=\inf_{u\in\mathscr U_{\mathrm{ad}}[t,T]}J(t,\xi;u).
$$
Furthermore, there exists a constant $K>0$ only depending on $T,\delta$, the Lipschitz constant of $\mathrm D_xg,\mathrm D_xl,\mathrm D_ul$ and the bound of $A,B,C,D$ such that
\begin{equation}\label{2:est}
\mathbb E\bigg[\sup_{t\leq s\leq T}\vert \bar X_s^{t,\xi}\vert^2+ \sup_{t\leq s\leq T}\vert\bar Y_s^{t,\xi}\vert^2+\int_t^T\vert \bar Z_s^{t,\xi}\vert^2\, \mathrm dt+\int_t^T\vert \bar u_s^{t,\xi}\vert^2\, \mathrm ds\bigg]
\leq KI^{t,\xi},
\end{equation} 
where
$$
\begin{aligned}
I^{t,\xi}=&\mathbb E\bigg[\vert\xi\vert^2+\vert\mathrm D_xg(0)\vert^2 +\left(\int_t^T\vert  b(s)\vert\, \mathrm ds\right)^2+\left(\int_t^T\vert\mathrm D_xl(s,0,0)\vert\, \mathrm ds\right)^2\\
&+ \int_t^T [\vert  \sigma(s)\vert^2+\vert \mathrm D_u l(s,0,0)\vert^2]\, \mathrm ds\bigg].
\end{aligned}
$$
\section{Regularity of solutions to Hamiltonian systems}\label{sec3}
This section is devoted to establishing the regularity of the solution $(\bar X^{t,\xi},\bar Y^{t,\xi},\bar Z^{t,\xi},\bar u^{t,\xi})$, including the derivatives with respect to $x$ and the Malliavin derivatives.
In addition to \eqref{1:do} and \eqref{1:do2}, we further introduce the following differential operators:
$$
\mathrm D^2_{ux}=\mathrm D_u\mathrm D_x^\top,~~ \mathrm D^2_{xu}:=\mathrm D_x\mathrm D_u^\top.
$$
From now on, we would like to suppose the following assumptions:
\begin{assumption}\label{A3}
All coefficients $A,B,C,D,b,\sigma,g,l$ are deterministic.
The derivatives $\mathrm D_xg,\mathrm D^2_{xx}g,\mathrm D_xl,\mathrm D_ul,\mathrm D^2_{xx}l,$ $\mathrm D^2_{xu}l,\mathrm D^2_{ux}l,\mathrm D^2_{uu}l$ exist.
The functions $A,B,C,D,b,\sigma,g,l$ and all derivatives of $g,l$ are jointly continuous in $(t,x,u)$.
All second-order derivatives of $g,l$ are uniformly bounded.
\end{assumption}
\begin{assumption}\label{A4}
  There exists a constant $\delta>0$ such that, for any $(t,\xi)\in[0,T]\times L^2_{\mathcal F_t}(\Omega;\mathbb R^n)$, the mapping $u\mapsto J(t,\xi;u)-\frac\delta 2\Vert u\Vert^2_{L^2}$ is convex.
\end{assumption}

Under \ref{A3} and \ref{A4}, all assumptions of Theorem \ref{th:wp} are satisfied for any given initial pair $(t,\xi)\in[0,T]\times L^2_{\mathcal F_t}(\Omega;\mathbb R^n)$.
Therefore, the corresponding Hamiltonian system admits a unique solution $(\bar X^{t,\xi},\bar Y^{t,\xi},\bar Z^{t,\xi},\bar u^{t,\xi})\in\mathcal M[t,T]$.
\begin{remark}\label{rmk}
  Let $\mathbb F^t:=\{\mathcal F_s^t\}_{s\in[t,T]}$ be a completed filtration, where $\mathcal F_s^t$ is $\sigma(W_r-W_t,r\in[t,s])$ augmented by all $\mathbb P$-null sets. 
  Noting that all coefficients are deterministic, for any initial pair $(t,x)\in[0,T]\times \mathbb R^n$, we would like to consider the Hamiltonian system \eqref{2:hst} on the filtration $\mathbb F^t$.
  Thus, applying Theorem \ref{th:wp}, we derive that the unique solution $(\bar X^{t,x},\bar Y^{t,x},\bar Z^{t,x},\bar u^{t,x})$ is adapted to $\mathbb F^t$.
  In other words, $(\bar X^{t,x},\bar Y^{t,x},\bar Z^{t,x},\bar u^{t,x})$ is independent of $\mathcal F_t$.
  Specially, $\bar Y^{t,x}_t$ is deterministic.
\end{remark}

For any $(t,x)\in[0,T]\times\mathbb R^n$, we extend the solution $(\bar X^{t,x},\bar Y^{t,x},\bar Z^{t,x},\bar u^{t,x})$ from $\mathcal M[t,T]$ to $\mathcal M[0,T]$ by letting
\begin{equation*}
(\bar X^{t,x}_s,\bar Y^{t,x}_s,\bar Z^{t,x}_s,\bar u^{t,x}_s)=(x,\bar Y^{t,x}_t,0,0),~~\mbox{for all }s \in[0,t).
\end{equation*}
The following proposition comes from the estimate \eqref{2:est}.
\begin{proposition}\label{prop:est}
  Under Assumptions \ref{A3} and \ref{A4}, there exists a constant $K>0$ such that, for any $(t,x),(t',x')\in[0,T]\times \mathbb R^n$, 
\begin{equation}\label{prop:est:1}
\mathbb E\bigg[\sup_{0\leq s\leq T}(\vert \bar X^{t,x}_s \vert^2+\vert \bar Y^{t,x}_s \vert^2)+\int_0^T(\vert \bar Z^{t,x}_s \vert^2+\vert \bar u^{t,x}_s \vert^2)\, \mathrm ds\bigg]\leq K(1+\vert x\vert^2),
\end{equation} 
\begin{equation}\label{prop:est:2}
\begin{aligned}
&\mathbb E\bigg[\sup_{0\leq s\leq T}(\vert \bar X^{t,x}_s-\bar X^{t',x'}_s \vert^2+\vert \bar Y^{t,x}_s-\bar Y^{t',x'}_s\vert^2)\\
&+\int_0^T(\vert \bar Z^{t,x}_s-\bar Z^{t',x'}_s \vert^2+\vert \bar u^{t,x}_s-\bar u^{t',x'}_s \vert^2)\, \mathrm ds\bigg]\\
\leq &K\Big[\vert x-x'\vert^2+(1+\vert x\vert^2+\vert x'\vert^2)\vert t-t'\vert\Big].
\end{aligned}
\end{equation} 
\end{proposition}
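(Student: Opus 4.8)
The plan is to obtain \eqref{prop:est:1} as a direct reading of the a priori bound \eqref{2:est}, and to split \eqref{prop:est:2} into a variation in $x$ (with $t=t'$) and a variation in $t$ (with $x=x'$), handling each difference as the solution of a linear Hamiltonian system of the type \eqref{2:hslq}. For \eqref{prop:est:1} I would note that under Assumption \ref{A3} the data $b,\sigma,\mathrm D_xg(0),\mathrm D_xl(\cdot,0,0),\mathrm D_ul(\cdot,0,0)$ are deterministic and continuous on $[0,T]$, hence bounded, so that $I^{t,x}\le K(1+|x|^2)$; since the extension to $\mathcal M[0,T]$ only freezes the components at constants on $[0,t)$ and leaves $\sup_{[0,T]}$, $\int_0^T|\bar Z|^2\,\mathrm ds$ and $\int_0^T|\bar u|^2\,\mathrm ds$ unchanged, \eqref{prop:est:1} follows at once from \eqref{2:est}.

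For the variation in $x$ I fix $t=t'$ and set $\delta X=\bar X^{t,x}-\bar X^{t,x'}$, and analogously $\delta Y,\delta Z,\delta u$. Subtracting the two Hamiltonian systems and expanding each increment of $\mathrm D_xl,\mathrm D_ul,\mathrm D_xg$ by the fundamental theorem of calculus, $(\delta X,\delta Y,\delta Z,\delta u)$ solves a system of the form \eqref{2:hslq} with $b=\sigma=q=\rho=r=0$, initial datum $\delta X_t=x-x'$, and coefficients
\[
\begin{pmatrix}Q(s)&S(s)^\top\\ S(s)&R(s)\end{pmatrix}=\int_0^1\begin{pmatrix}\mathrm D^2_{xx}l&\mathrm D^2_{xu}l\\ \mathrm D^2_{ux}l&\mathrm D^2_{uu}l\end{pmatrix}(s,X^\lambda_s,u^\lambda_s)\,\mathrm d\lambda,\qquad G=\int_0^1\mathrm D^2_{xx}g(X^\lambda_T)\,\mathrm d\lambda,
\]
where $(X^\lambda,u^\lambda)$ interpolates linearly between the two solutions (and $X^\lambda$ is the state driven by $u^\lambda$ from $x^\lambda:=(1-\lambda)x+\lambda x'$, by affineness of the dynamics). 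By Assumption \ref{A3} these matrices are bounded, so Assumptions \ref{A1}, \ref{A2} hold for this LQ problem; the essential point is that it inherits \emph{uniform} convexity, because the second variation of $J(t,x^\lambda;\cdot)$ at the base control $u^\lambda$ is bounded below by $\delta\Vert\cdot\Vert_{L^2}^2$ for every $\lambda$ (Assumption \ref{A4}) and the linearized state is independent of $\lambda$, so averaging over $\lambda$ yields uniform convexity of the associated LQ cost with the same constant $\delta$. Theorem \ref{th:wp} and \eqref{2:est} then give $\mathbb E[\sup_{[t,T]}(|\delta X|^2+|\delta Y|^2)+\int_t^T(|\delta Z|^2+|\delta u|^2)\,\mathrm ds]\le K|x-x'|^2$; the same computation for random initial data $\xi,\xi'\in L^2_{\mathcal F_t}(\Omega;\mathbb R^n)$ produces the bound $K\mathbb E|\xi-\xi'|^2$, which I shall reuse below.

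For the variation in $t$ I take $t<t'$ and $x=x'$, and first record the flow property: since all coefficients are deterministic, the restriction of $(\bar X^{t,x},\bar Y^{t,x},\bar Z^{t,x},\bar u^{t,x})$ to $[t',T]$ solves the Hamiltonian system started from $(t',\bar X^{t,x}_{t'})$, so by the uniqueness in Theorem \ref{th:wp} it coincides there with the solution issued from $(t',\bar X^{t,x}_{t'})$. The $x$-variation estimate with random initial data then bounds the difference on $[t',T]$ by $K\mathbb E|\bar X^{t,x}_{t'}-x|^2$, while on $[0,t')$ the extended process $\bar X^{t',x}$ is frozen at $x$ and $(\bar Z^{t',x},\bar u^{t',x})$ vanish, so the remaining contribution is $\mathbb E\sup_{[t,t']}|\bar X^{t,x}_s-x|^2$, $\mathbb E\sup_{[t,t']}|\bar Y^{t,x}_s-\bar Y^{t',x}_{t'}|^2$ and $\mathbb E\int_t^{t'}(|\bar Z^{t,x}_s|^2+|\bar u^{t,x}_s|^2)\,\mathrm ds$. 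Writing out the forward SDE increment and the backward BSDE increment over $[t,t']$ and using \eqref{prop:est:1}, every term is $O((1+|x|^2)|t'-t|)$ \emph{except} the energy $\mathbb E\int_t^{t'}(|\bar Z^{t,x}_s|^2+|\bar u^{t,x}_s|^2)\,\mathrm ds$, for which \eqref{prop:est:1} only supplies the global bound $K(1+|x|^2)$.

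The hard part will therefore be the short-interval energy estimate
\[
\mathbb E\int_t^{t'}\big(|\bar Z^{t,x}_s|^2+|\bar u^{t,x}_s|^2\big)\,\mathrm ds\le K(1+|x|^2)\,|t'-t|,
\]
which amounts to a uniform-in-time bound $\sup_{s\in[t,T]}\mathbb E[|\bar Z^{t,x}_s|^2+|\bar u^{t,x}_s|^2]\le K(1+|x|^2)$ and cannot be extracted from the global $L^2$ a priori estimate alone, since $(\bar Z^{t,x},\bar u^{t,x})$ is a priori only square-integrable in time; it is a genuine regularity property of the optimal pair. I would isolate it as a separate lemma, proving it from the stationarity relation in \eqref{2:hst} together with the Malliavin regularity of $(\bar Z^{t,x},\bar u^{t,x})$ developed in this section. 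Granting this bound, the short-interval increments of $\bar X^{t,x}$ and $\bar Y^{t,x}$ become $O((1+|x|^2)|t'-t|)$, and assembling the $[t',T]$ and $[t,t']$ contributions by the triangle inequality yields \eqref{prop:est:2}.
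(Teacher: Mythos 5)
Your treatment of \eqref{prop:est:1} and of the pure $x$-variation is sound and matches the paper's technology (linearize via the fundamental theorem of calculus, verify uniform convexity of the averaged LQ data through Lemma \ref{lem:2oc} at the interpolated admissible pair, invoke Theorem \ref{th:wp}; the paper does exactly this for the system \eqref{3:lhsd}). The genuine gap is the $t$-variation: your whole argument funnels into the short-interval energy estimate $\mathbb E\int_t^{t'}(|\bar Z^{t,x}_s|^2+|\bar u^{t,x}_s|^2)\,\mathrm ds\leq K(1+|x|^2)|t'-t|$, which you correctly flag as the hard part but then merely defer to an unproven lemma. This estimate is not an auxiliary fact --- it is equivalent in strength to the $t=t'$, $x=x'$ case of \eqref{prop:est:2} itself (on $[t,t')$ the extension sets $\bar Z^{t',x}=0$, $\bar u^{t',x}=0$), so deferring it begs the question. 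Worse, the proof route you sketch is circular within the paper's architecture: the stationarity condition alone does not determine $\bar u$ pointwise (under Assumption \ref{A4}, $\mathrm D^2_{uu}l$ may be degenerate, as in Case 2), and the pointwise control of $(\bar Z_s,\bar u_s)$ is only obtained much later via the feedback representation $\bar u^{t,x}_s=\bar{\mathbbm u}(s,\bar X^{t,x}_s)$ of Proposition \ref{prop:cv}, which rests on Propositions \ref{prop:cvg}, \ref{prop:vfd} and \ref{prop:vfpd} --- all of which use \eqref{prop:est:2} in their proofs. The Malliavin system of Lemma \ref{lem:md} does not rescue you either: applying \eqref{2:est} to it bounds $\mathbb E|\bar Z_\theta|^2=\mathbb E|\mathbf D_\theta\bar Y_\theta|^2$ by $K(1+\mathbb E[|\bar X_\theta|^2+|\bar u_\theta|^2])$, i.e., precisely in terms of the quantity you are trying to control.

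The paper avoids this obstruction entirely by never estimating time-increments of the original solution. It treats both variations at once: the difference $(\hat X,\hat Y,\hat Z,\hat u)$ is shown to solve the shifted nonlinear Hamiltonian system \eqref{pf:prop:est:1}, whose data $\hat b,\hat\sigma$ and the gradient corrections in $\hat l$ are supported on $[t,t']$ with magnitude $O(1+|x'|)$ (the indicator terms compensate for the fact that the frozen extension $(x',\bar Y^{t',x'}_{t'},0,0)$ does not solve the original system on $[t,t']$), while $\mathrm D_x\hat g(0)=0$. Uniform convexity of the shifted cost $\hat J$ follows because $\hat X^{t,x,u}+\bar X^{t',x'}=X^{t,x,u+\bar u^{t',x'}}$, so $\hat J$ is $J$ composed with a translation plus terms linear in $u$. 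A single application of the a priori estimate \eqref{2:est} then delivers \eqref{prop:est:2}, the factor $|t-t'|$ coming from the smallness of the data $I^{t,\hat\xi}$ rather than from any pointwise-in-time bound on $(\bar Z,\bar u)$. To repair your proof you would either need to adopt this shifted-system device or supply an independent proof of the short-interval energy bound that does not pass through the downstream feedback machinery; as written, the proposal is incomplete at its decisive step.
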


\begin{proof}
The estimate \eqref{prop:est:1} comes from the estimate \eqref{2:est} immediately. 
To prove \eqref{prop:est:2}, we assume $t\leq t'$ without loss of generality. 
Denote $\hat X:=\bar X^{t,x}-\bar X^{t',x'}$, similarly for $\hat Y,\hat Z$ and $\hat u$.
It is straightforward to check that $(\hat X,\hat Y,\hat Z,\hat u)$ satisfies the following nonlinear Hamiltonian system:
\begin{equation}\label{pf:prop:est:1}
\left\{
\begin{aligned}
&\mathrm d\hat X_s = \big\{A(s)\hat X_s+B(s)\hat u_s+\hat b(s)\big\} \, \mathrm ds+\sum_{j=1}^d \big\{C_j(s)\hat X_s+D_j(s)\hat u_s+\hat\sigma_j(s)\big\}\, \mathrm dW^j_s,\\
&\mathrm d \hat Y_s=-\big\{A(s)^\top \hat Y_s +C(s)^\top \hat Z_s+\mathrm D_x\hat l(s,\hat X_s,\hat u_s)\big\}\, \mathrm ds+\sum_{j=1}^{d}\hat Z^{j}_s\, \mathrm dW^j_s,\\
&\mathrm D_u\hat l(s,\hat X_s,\hat u_s)+B(s)^\top\hat Y_s+D(s)^\top\hat Z_s=0,\quad s\in [t,T],\\
&\hat X_t = x-x', ~~~~~\hat Y^h_T=\mathrm D_x\hat g(\hat X_T),
\end{aligned}
\right.
\end{equation} 
where 
\begin{equation*}
\begin{aligned}
\hat b(s)=&[A(s)x'+b(s)]\mathbbm 1_{[t,t']}(s),~~\hat \sigma_i(s)=[C_j(s)x'+\sigma_j(s)]\mathbbm 1_{[t,t']}(s),\\
\hat l(s,x,u)=&l(s,x+\bar X^{t',x'}_s,u+\bar u^{t',x'}_s)\\
&-\langle \mathrm D_ul(s,\bar X^{t',x'}_s,\bar u^{t',x'}_s)-[B(s)^\top Y^{t',x'}_{t'}+\mathrm D_ul(s,x',0)]\mathbbm 1_{[t,t']}(s),u\rangle\\
&-\langle \mathrm D_xl(s,\bar X^{t',x'}_s,\bar u^{t',x'}_s)-[A(s)^\top\bar Y^{t',x'}_{t'}+\mathrm D_xl(s,x',0)]\mathbbm 1_{[t,t']}(s),x\rangle,\\
\hat g(x)=&g(x+\bar X^{t',x'}_T)-\langle\mathrm D_xg(\bar X^{t',x'}_T),x\rangle,
\end{aligned}
\end{equation*} 
and $\mathbbm 1_{[t,t']}$ is the indicator function of $[t,t']$.
Then, we introduce the following cost functional:
\begin{equation} 
\hat J(t,x-x';u)=\mathbb E\left[\hat g(\hat X^{t,x,u}_T)+\int_t^T\hat l(s,\hat X^{t,x,u}_s,u_s)\, \mathrm ds\right],
\end{equation} 
where $\hat X^{t,x,u}$ satisfies the following linear SDE:
\begin{equation}
\left\{
\begin{aligned}
\mathrm d\hat X^{t,x,u}_s=& \big[A(s)\hat X^{t,x,u}_s+B(s)u_s+\hat b(s)\big] \, \mathrm ds\\
&+\sum_{i=1}^d \big[C_i(s)\hat X^{t,x,u}_s+D_i(s)u_s+\hat \sigma_i(s)\big]\, \mathrm dW^i_s,\quad s\in [t,T],\\
\hat X^{t,x,u}_t =& x-x'.
\end{aligned}
\right.
\end{equation}
Noting that $\hat X^{t,x,u}+\bar X^{t',x'}=X^{t,x,u+\bar u^{t',x'}}$, we have
\begin{equation}
\begin{aligned}
&\hat J(t,x-x';u)=J(t,x;u+\bar u^{t',x'})-\mathbb E\Bigg[\langle\mathrm D_xg(\bar X^{t',x'}_T),\hat X^{t,x,u}_T\rangle\\
&+\int_t^T\left\langle
  \begin{pmatrix}
\mathrm D_xl(s,\bar X^{t',x'}_s,\bar u^{t',x'}_s)-[A(s)^\top\bar Y^{t',x'}_{t'}+\mathrm D_xl(s,x',0)]\mathbbm 1_{[t,t']}(s)\\
\mathrm D_ul(s,\bar X^{t',x'}_s,\bar u^{t',x'}_s)-[B(s)^\top Y^{t',x'}_{t'}+\mathrm D_ul(s,x',0)]\mathbbm 1_{[t,t']}(s)
\end{pmatrix}
,\begin{pmatrix}
\hat X^{t,x,u}_s\\
  u_s
\end{pmatrix}
\right\rangle\, \mathrm ds\Bigg].
\end{aligned}
\end{equation}
Since $J(t,x;u)$ is uniformly convex in $u$, we conclude $J(t,x;u+\bar u^{t',x'})$, which composites a linear transformation, is also uniformly convex in $u$.
The rest parts are linear in $u$.
Thus, $\hat J$ is uniformly convex, showing that the Hamiltonian system \eqref{pf:prop:est:1} satisfies the assumptions in Theorem \ref{th:wp}.
By the estimate \eqref{2:est} again, we derive \eqref{prop:est:2}.
\end{proof}

For any given initial pair $(t,\xi)\in[0,T]\times L^2_{\mathcal F_t}(\Omega;\mathbb R^n)$ and any admissible control $u\in \mathscr U_{\mathrm {ad}}[t,T]$, we introduce the following notations:
for any $s\in[t,T]$,
\begin{equation}\label{3:not1}
\begin{aligned}
\mathcal G^{t,\xi,u}:=\mathrm D^2_{xx}g(X^{t,\xi,u}_T),~~
\begin{pmatrix}
\mathcal Q^{t,\xi,u}_s& (\mathcal S^{t,\xi,u}_s)^\top\\
\mathcal S^{t,\xi,u}_s& \mathcal R^{t,\xi,u}_s
\end{pmatrix}:=\begin{pmatrix}
\mathrm D^2_{xx}l& \mathrm D^2_{xu}l\\
\mathrm D^2_{ux}l& \mathrm D^2_{uu}l
\end{pmatrix}(s,X^{t,\xi,u}_s,u_s).
\end{aligned}
\end{equation}

Using the above notations, we provide the following lemma.
Compared with Lemma \ref{lem:1oc}, which is the first-order condition for the uniform convexity, the following lemma can be regarded as a second-order condition.

\begin{lemma}\label{lem:2oc}
  Let Assumption \ref{A3} hold. Then for any given initial pair $(t,\xi)\in[0,T]\times L^2_{\mathcal F_t}(\Omega;\mathbb R^n)$,
  the mapping $u\mapsto J(t,\xi;u)$ is uniformly convex if and only if there exists a constant $\delta>0$, such that for any $u,v\in \mathscr U_{\mathrm {ad}}[t,T]$,
\begin{equation}\label{lem:2oc:1}
\begin{aligned}
\mathbb E\int_t^T \left\langle \begin{pmatrix}
\mathcal Q^{t,\xi,u}_s& (\mathcal S^{t,\xi,u}_s)^\top\\
\mathcal S^{t,\xi,u}_s& \mathcal R^{t,\xi,u}_s
\end{pmatrix}\begin{pmatrix}
X^{t,\xi,v}_s-X^{t,\xi,u}_s\\
v_s-u_s
\end{pmatrix},\begin{pmatrix}
X^{t,\xi,v}_s-X^{t,\xi,u}_s\\
v_s-u_s
\end{pmatrix}\right\rangle \,\mathrm ds\\
+\mathbb E[\langle \mathcal G^{t,\xi,u}(X^{t,\xi,v}_T-X^{t,\xi,u}_T),X^{t,\xi,v}_T-X^{t,\xi,u}_T\rangle]\geq\delta \mathbb E\int_t^T\vert v_s-u_s\vert^2 \mathrm ds.
\end{aligned}
\end{equation}
\end{lemma}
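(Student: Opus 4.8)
The plan is to reduce the statement to the elementary fact that a functional on a Hilbert space is uniformly convex with modulus $\delta$ if and only if, along every line segment, its restriction is a one-dimensional function whose second derivative is at least $\delta$ times the squared length of the direction. Concretely, fix $(t,\xi)$ and $u,v\in\mathscr U_{\mathrm{ad}}[t,T]$, write $u^\gamma:=u+\gamma(v-u)$ for $\gamma\in[0,1]$, and set $g(\gamma):=J(t,\xi;u^\gamma)$. By the linearity of the state equation, $X^{t,\xi,u^\gamma}=X^{t,\xi,u}+\gamma(X^{t,\xi,v}-X^{t,\xi,u})$, so $\nabla X:=X^{t,\xi,v}-X^{t,\xi,u}$ is the ($\gamma$-independent) derivative of the state along the segment. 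Applying Proposition \ref{prop:fd} at the base point $u^\gamma$ gives
\begin{equation*}
g'(\gamma)=\mathbb E\Big[\langle\mathrm D_xg(X^{t,\xi,u^\gamma}_T),\nabla X_T\rangle+\int_t^T\Big\langle\begin{pmatrix}\mathrm D_xl\\ \mathrm D_ul\end{pmatrix}(s,X^{t,\xi,u^\gamma}_s,u^\gamma_s),\begin{pmatrix}\nabla X_s\\ v_s-u_s\end{pmatrix}\Big\rangle\,\mathrm ds\Big].
\end{equation*}
The quadratic form on the left-hand side of \eqref{lem:2oc:1} is then precisely $g''(0)$, so the whole lemma amounts to identifying $g''$ and invoking the one-dimensional characterization of uniform convexity.

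The technical heart, and the step I expect to be the main obstacle, is the rigorous computation of $g''$, that is, differentiating the expression for $g'$ a second time and interchanging $\frac{\mathrm d}{\mathrm d\gamma}$ with $\mathbb E$ and $\int_t^T$. I would carry this out through difference quotients: since the state difference is linear in the control, the increment $X^{t,\xi,u^{\gamma+h}}_s-X^{t,\xi,u^\gamma}_s$ equals $h\,\nabla X_s$ exactly, and a mean-value argument writes each difference quotient of $\mathrm D_xg$, $\mathrm D_xl$, $\mathrm D_ul$ as a second derivative evaluated at an intermediate point acting on $(\nabla X_s,v_s-u_s)$. Assumption \ref{A3} supplies the existence, joint continuity, and uniform boundedness of all second-order derivatives of $g$ and $l$, while the standard estimate for the linear SDE (cf.\ \eqref{pf:prop:fd:2}) controls $\nabla X$ in $L^2$; dominated convergence then justifies passing to the limit and yields
\begin{equation*}
g''(\gamma)=\mathbb E\Big[\langle\mathrm D^2_{xx}g(X^{t,\xi,u^\gamma}_T)\nabla X_T,\nabla X_T\rangle+\int_t^T\Big\langle\begin{pmatrix}\mathrm D^2_{xx}l&\mathrm D^2_{xu}l\\ \mathrm D^2_{ux}l&\mathrm D^2_{uu}l\end{pmatrix}(s,X^{t,\xi,u^\gamma}_s,u^\gamma_s)\begin{pmatrix}\nabla X_s\\ v_s-u_s\end{pmatrix},\begin{pmatrix}\nabla X_s\\ v_s-u_s\end{pmatrix}\Big\rangle\,\mathrm ds\Big].
\end{equation*}
In particular $g''$ is continuous in $\gamma$, so $g\in C^2([0,1])$, and $g''(0)$ coincides with the left-hand side of \eqref{lem:2oc:1}.

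With $g''$ in hand, both implications are short. For necessity, uniform convexity of $J$ means $J(\cdot)-\tfrac\delta2\Vert\cdot\Vert^2_{L^2}$ is convex; restricting this convex functional to the segment shows that $\gamma\mapsto g(\gamma)-\tfrac\delta2\Vert u^\gamma\Vert^2_{L^2}$ is convex, and since it is $C^2$ its second derivative $g''(\gamma)-\delta\Vert v-u\Vert^2_{L^2}$ is nonnegative; taking $\gamma=0$ gives \eqref{lem:2oc:1}. For sufficiency, I would exploit that \eqref{lem:2oc:1} is assumed for \emph{all} pairs of controls: applying it at base point $u^\gamma$ with comparison control $v':=u^\gamma+(v-u)$, and using that $X^{t,\xi,v'}-X^{t,\xi,u^\gamma}=\nabla X$ (again by linearity of the state in the control) together with $\Vert v'-u^\gamma\Vert_{L^2}=\Vert v-u\Vert_{L^2}$, the left-hand side becomes exactly $g''(\gamma)$, whence $g''(\gamma)\geq\delta\Vert v-u\Vert^2_{L^2}$ for every $\gamma\in[0,1]$. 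Thus $\gamma\mapsto g(\gamma)-\tfrac\delta2\Vert u^\gamma\Vert^2_{L^2}$ is convex on $[0,1]$ along every segment, which is equivalent to convexity of $J(\cdot)-\tfrac\delta2\Vert\cdot\Vert^2_{L^2}$, i.e.\ to the uniform convexity of $J$.
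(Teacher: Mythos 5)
Your proposal is correct and is essentially the paper's argument in different packaging: your computation of the second derivative of $\gamma\mapsto J(t,\xi;u+\gamma(v-u))$ at $\gamma=0$ via exact difference quotients (using linearity of the state) and dominated convergence with the bounded, jointly continuous second derivatives of \ref{A3} is precisely the paper's passage from the gradient-monotonicity inequality \eqref{lem:1oc:2} with $v$ replaced by $u^\varepsilon$, divided by $\varepsilon^2$, with remainder $\Gamma^{\varepsilon}\to 0$; and your converse step---applying \eqref{lem:2oc:1} at the shifted base points $u^\gamma$ with direction $v-u$ and integrating along the segment---is exactly the paper's necessity argument, which writes the monotonicity quantity as an integral over the segment of the same quadratic forms and then invokes Lemma \ref{lem:1oc}. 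The only cosmetic difference is that you route both implications through the one-dimensional restriction and its second derivative rather than through the first-order characterization of Lemma \ref{lem:1oc}, which changes nothing of substance.
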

\begin{proof}
  For the sufficiency, we assume that $J$ is uniformly convex.
  Substituting \eqref{prop:fd:2} into \eqref{lem:1oc:2}, we have, for any $ u,v\in \mathscr U_{\mathrm {ad}}[t,T]$,
  \begin{equation}\label{pf:lem:2oc:1}
\begin{aligned}
&\mathbb E\int_t^T \left\langle 
\begin{pmatrix}
\mathrm D_{x}l(s,X^{t,\xi,v}_s,v_s)-\mathrm D_{x}l(s,X^{t,\xi,u}_s,u_s)\\
\mathrm D_{u}l(s,X^{t,\xi,v}_s,v_s)-\mathrm D_{u}l(s,X^{t,\xi,u}_s,u_s)
\end{pmatrix},
\begin{pmatrix}
X^{t,\xi,v}_s-X^{t,\xi,u}_s\\
v_s-u_s
\end{pmatrix}
\right\rangle\,\mathrm ds\\
&+\mathbb E[\langle \mathrm D_{x}g(X^{t,\xi,v}_T)-\mathrm D_{x}g(X^{t,\xi,u}_T),X^{t,\xi,v}_T-X^{t,\xi,u}_T\rangle]\geq\delta \mathbb E\int_t^T\vert v_s-u_s\vert^2 \mathrm ds.
\end{aligned}
\end{equation}
For simplification, we denote $\hat u:=v-u$ and $\hat X^{v-u}:=X^{t,\xi,v}-X^{t,\xi,u}$.
Let $u^{\varepsilon}:=u+\varepsilon \hat u$, where $\varepsilon\in(0,1]$. 
By the linearity of the controlled SDE, we have $X^{t,\xi,u^{\varepsilon}}-X^{t,\xi,u}=\varepsilon \hat X^{v-u}$ for any $\varepsilon\in(0,1]$.
Therefore, replacing $v$ by $u^{\varepsilon}$ in \eqref{pf:lem:2oc:1}, we derive
\begin{equation}\label{pf:lem:2oc:2}
\begin{aligned}
\varepsilon^2\mathbb E\int_t^T \left\langle 
\begin{pmatrix}
\mathcal Q^{t,\xi,u}_s& (\mathcal S^{t,\xi,u}_s)^\top\\
\mathcal S^{t,\xi,u}_s& \mathcal R^{t,\xi,u}_s
\end{pmatrix}
\begin{pmatrix}
\hat X^{v-u}_s\\
\hat u_s
\end{pmatrix},\begin{pmatrix}
\hat X^{v-u}_s\\
\hat u_s
\end{pmatrix}\right\rangle \,\mathrm ds\\
+\varepsilon^2\mathbb E[\langle \mathcal G^{t,\xi,u}\hat X^{v-u}_T,\hat X^{v-u}_T\rangle]+\varepsilon^2\Gamma^{\varepsilon}\geq\delta \varepsilon^2\mathbb E\int_t^T\vert \hat u_s\vert^2 \mathrm ds,
\end{aligned}
\end{equation}
where
\begin{equation*}
\begin{aligned}
&\Gamma^{\varepsilon}=\mathbb E\int_0^1\langle [\mathcal G^{t,\xi,u^{\gamma\varepsilon}}-\mathcal G^{t,\xi,u}]\hat X^{v-u}_T,\hat X^{v-u}_T\rangle\,\mathrm d\gamma\\
&+\mathbb E\int_t^T \int_0^1\Bigg\langle
\begin{pmatrix}
\mathcal Q^{t,\xi,u^{\gamma\varepsilon}}_s-\mathcal Q^{t,\xi,u}_s& (\mathcal S^{t,\xi,u^{\gamma\varepsilon}}_s-\mathcal S^{t,\xi,u}_s)^\top\\
\mathcal S^{t,\xi,u^{\gamma\varepsilon}}_s-\mathcal S^{t,\xi,u}_s& \mathcal R^{t,\xi,u^{\gamma\varepsilon}}_s-\mathcal R^{t,\xi,u}_s
\end{pmatrix}\begin{pmatrix}
\hat X^{v-u}_s\\
\hat u_s
\end{pmatrix},
\begin{pmatrix}
\hat X^{v-u}_s\\
\hat u_s
\end{pmatrix}\Bigg\rangle \,\mathrm d\gamma\,\mathrm ds
\end{aligned}
\end{equation*}
and $u^{\gamma\varepsilon}:=u+\gamma\varepsilon\hat u$.
Noting that $X^{t,\xi,u}+\gamma\varepsilon\hat X^{v-u}=X^{t,\xi,u^{\gamma\varepsilon}}$, the above representation follows.
Since all second-order derivatives of $g,l$ are bounded and continuous and 
$$
\mathbb E\bigg[\sup_{t\leq s\leq T}\vert \hat X^{v-u}_s\vert^2\bigg]\leq K\mathbb E\int_t^T\vert \hat u_s\vert^2 \mathrm ds,
$$
applying Lebesgue's dominated convergence theorem leads to
\begin{equation}
  \lim_{\varepsilon\to0}\Gamma^{\varepsilon}=0.
\end{equation}
Therefore, dividing both sides of \eqref{pf:lem:2oc:2} by $\varepsilon^2$ and letting $\varepsilon\to0$, we conclude \eqref{lem:2oc:1}.

For the necessary, we assume \eqref{lem:2oc:1} holds for any $u,v\in \mathscr U_{\mathrm {ad}}[t,T]$.
Then, noting that the equality \eqref{prop:fd:2} does not rely on the uniform convexity of $J$, we have
  \begin{equation}
\begin{aligned}
&\mathbb E\int_0^T\langle \mathcal D[u]_t-\mathcal D[v]_t ,u_t-v_t\rangle\, \mathrm d t=\mathbb E\int_0^1\langle \mathcal G^{t,\xi,u^{\varepsilon}}\hat X^{v-u}_T,\hat X^{v-u}_T\rangle\,\mathrm d\gamma\\
&+\mathbb E\int_t^T \int_0^1\Bigg\langle
\begin{pmatrix}
\mathcal Q^{t,\xi,u^{\varepsilon}}_s& (\mathcal S^{t,\xi,u^{\varepsilon}}_s)^\top\\
\mathcal S^{t,\xi,u^{\varepsilon}}_s& \mathcal R^{t,\xi,u^{\varepsilon}}_s
\end{pmatrix}\begin{pmatrix}
\hat X^{v-u}_s\\
\hat u_s
\end{pmatrix},
\begin{pmatrix}
\hat X^{v-u}_s\\
\hat u_s
\end{pmatrix}\Bigg\rangle \,\mathrm d\varepsilon\,\mathrm ds\geq\delta \mathbb E\int_t^T\vert v_s-u_s\vert^2\,\mathrm ds.
\end{aligned}
\end{equation}
By Lemma \ref{lem:1oc}, the cost functional $J$ is uniformly convex.
The proof is completed.
\end{proof}
\begin{remark}
When \eqref{2:lqc} holds, i.e., the LQ case, all second-order derivatives of $g,l$ are independent of $(x,u)$.
The condition \eqref{lem:2oc:1} is reduced to 
\begin{equation}\label{3:lquc}
\begin{aligned}
&\mathbb E\int_t^T\left\langle \begin{pmatrix}
Q(s)& S(s)^\top\\
S(s)& R(s)
\end{pmatrix}\begin{pmatrix}
\hat X_s\\
\hat u_s
\end{pmatrix},\begin{pmatrix}
\hat X_s\\
\hat u_s
\end{pmatrix}\right\rangle\, \mathrm ds+\mathbb E[ \langle G\hat X_T,\hat X_T\rangle]\geq\delta \mathbb E\int_t^T\vert u_s\vert^2\, \mathrm ds,
\end{aligned}
\end{equation} 
where we denote $\hat X:= X^{t,\xi,v}-X^{t,\xi,u}$ and $\hat u:=v-u$.
Noting that $\hat X$ satisfies the following linear controlled SDE:
\begin{equation}
\hat X_s= \int_t^s\big[A(s)\hat X_s+B(s) \hat u_s\big]\, \mathrm ds+\sum_{i=1}^d \int_t^s\big[C_i(s)\hat X_s+D_i(s) \hat u_s\big]\, \mathrm dW^i_s,~~~s\in[0,T],
\end{equation}
which is independent of $\xi$, the above condition \eqref{3:lquc} does not actually rely on the initial value.
Moreover, Sun et al. \cite[Proposition 5.4]{sxy21} showed that \eqref{3:lquc} holds for any $t\in[0,T]$ if it holds for $t=0$.
In other words, Assumption \ref{A4} is equivalent to that \eqref{3:lquc} holds for $t=0$ in the LQ case.

However, when $g,l$ are not quadratic, the second-order derivatives depend on $(x,u)$ in general.
Thus, the condition \eqref{lem:2oc:1} relies on the initial pair $(t,\xi)$, which makes it impossible to obtain a similar simplified condition.
\end{remark}
Based on the above preparations, we now consider the derivative of $(\bar X^{t,x},\bar Y^{t,x},\bar Z^{t,x},\bar u^{t,x})$ with respect to $x$.
For any $i\in\{1,...,n\}$ and any $h\in\mathbb R/\{0\}$, we denote $\Delta_i^hX^{t,x}_s=h^{-1}[\bar X^{t,x+he_i}_s-\bar X^{t,x}_s]$, similarly for $\Delta_i^hY^{t,x},\Delta_i^hZ^{t,x},\Delta_i^hu^{t,x}$. 
It is straightforward to verify that $(\Delta_i^hX^{t,x},\Delta_i^hY^{t,x},\Delta_i^hZ^{t,x},\Delta_i^hu^{t,x})$ satisfies the following linear Hamiltonian system:

\begin{equation}\label{3:lhsd}
\left\{
\begin{aligned}
&\mathrm d\Delta_i^hX^{t,x}_s = \big[A(s)\Delta_i^hX^{t,x}_s+B(s)\Delta_i^hu^{t,x}_s\big]\, \mathrm ds\\
&\qquad\qquad+\sum_{j=1}^d \big[C_j(s)\Delta_i^hX^{t,x}_s+D_j(s)\Delta_i^hu^{t,x}_s\big]\, \mathrm dW^j_s,\\
&\mathrm d \Delta_i^hY^{t,x}_s=-\big[A(s)^\top \Delta_i^hY^{t,x}_s +C(s)^\top \Delta_i^hZ^{t,x}_s+\mathcal Q^{t,x,h}_s\Delta_i^hX^{t,x}_s\\
&\qquad\qquad+(\mathcal S^{t,x,h}_s)^\top\Delta_i^hu^{t,x}_s\big]\, \mathrm ds+\sum_{j=1}^{d}\Delta_i^hZ^{t,x,j}_s\, \mathrm dW^j_s,\\
&\mathcal R^{t,x,h}_s\Delta_i^hu^{t,x}_s+\mathcal S^{t,x,h}_s\Delta_i^hX^{t,x}_s+B(s)^\top \Delta_i^hY^{t,x}_s+D(s)^\top\Delta_i^hZ^{t,x}_s=0,\quad s\in [t,T],\\
& \Delta_i^hX^{t,x}_t = e_i, ~~~~~~~~\Delta_i^hY^{t,x}_T=\mathcal G^{t,x,h} \Delta_i^hX^{t,x}_T,
\end{aligned}
\right.
\end{equation} 
where
\begin{equation*}
\begin{aligned}
&\begin{pmatrix}
\mathcal Q^{t,x,h}_s & (\mathcal S^{t,x,h}_s)^\top \\
\mathcal S^{t,x,h}_s & \mathcal R^{t,x,h}_s
\end{pmatrix}:=\int_0^1\begin{pmatrix}
\mathrm D^2_{xx}l& \mathrm D^2_{xu}l\\
\mathrm D^2_{ux}l & \mathrm D^2_{uu}l
\end{pmatrix}(s,\bar X^{t,x}_s+\lambda h\Delta_i^hX^{t,x}_s,\bar u^{t,x}_s+\lambda h\Delta_i^hu^{t,x}_s)\, \mathrm d\lambda,\\
&\mathcal G^{t,x,h}:=\int_0^1\mathrm D^2_{xx}g(\bar X^{t,x}_T+\lambda h\Delta_i^hX^{t,x}_T)\, \mathrm d\lambda.
\end{aligned}
\end{equation*} 
By the linearity of controlled SDE \eqref{1:SDEt}, we deduce
\begin{equation}
\begin{aligned}
\bar X^{t,x}+\lambda h\Delta_i^hX^{t,x}=&X^{t,x,\bar u^{t,x}}+\lambda (X^{t,x+he_i,\bar u^{t,x+he_i}}-X^{t,x,\bar u^{t,x}})\\
=&X^{t,x+\lambda he_i,\bar u^{t,x}+\lambda h\Delta_i^hu^{t,x}_s}.
\end{aligned}
\end{equation} 
Therefore, by Lemma \ref{lem:2oc}, for any $u,v\in\mathscr U_{\mathrm {ad}}[t,T]$,
\begin{equation}
\begin{aligned}
\mathbb E\int_t^T \left\langle \begin{pmatrix}
\mathcal Q^{t,x,h}_s& (\mathcal S^{t,x,h}_s)^\top\\
\mathcal S^{t,x,h}_s& \mathcal R^{t,x,h}_s
\end{pmatrix}\begin{pmatrix}
X^{t,x,v}_s-X^{t,x,u}_s\\
v_s-u_s
\end{pmatrix},\begin{pmatrix}
X^{t,x,v}_s-X^{t,x,u}_s\\
v_s-u_s
\end{pmatrix}\right\rangle \,\mathrm ds\\
+\mathbb E[\langle \mathcal G^{t,x,h}(X^{t,x,v}_T-X^{t,x,u}_T),X^{t,x,v}_T-X^{t,x,u}_T\rangle]\geq\delta \mathbb E\int_t^T\vert v_s-u_s\vert^2 \mathrm ds.
\end{aligned}
\end{equation}
Compared to the uniform convexity condition \eqref{3:lquc} of the LQ case, the Hamiltonian system \eqref{3:lhsd} satisfies the assumptions of Theorem \ref{th:wp}. 
By the estimate \eqref {2:est}, there exists a constant $K>0$ such that for any given $h\in\mathbb R/\{0\}$,
\begin{equation}
\mathbb E\bigg[\sup_{t\leq s\leq T}\big(\vert\Delta_i^hX^{t,x}_s\vert^2+\vert\Delta_i^hY^{t,x}_s\vert^2\big)+\int_t^T(\vert\Delta_i^hZ^{t,x}_s\vert^2+\vert\Delta_i^hu^{t,x}_s\vert^2)\, \mathrm ds\bigg]\leq K.
\end{equation}

The following linear Hamiltonian system will be proven to be the limit of the Hamiltonian system \eqref{3:lhsd} when $h\to0$.
\begin{equation}\label{3:lhsg}
\left\{
\begin{aligned}
&\mathrm d\nabla_i X^{t,x}_s = \big[A(s)\nabla_i X^{t,x}_s+B(s)\nabla_i u^{t,x}_s\big]\, \mathrm ds\\
&\qquad\qquad+\sum_{j=1}^d \big[C_j(s)\nabla_i X^{t,x}_s+D_j(s)\nabla_i u^{t,x}_s\big]\, \mathrm dW^j_s,\\
&\mathrm d \nabla_i Y^{t,x}_s=-\big[A(s)^\top \nabla_i Y^{t,x}_s +C(s)^\top \nabla_i Z^{t,x}_s+\mathcal Q_s^{t,x}\nabla_i X^{t,x}_s\\
&\qquad\qquad+(\mathcal S_s^{t,x})^\top\nabla_i u^{t,x}_s\big]\, \mathrm ds+\sum_{j=1}^{d}\nabla_i Z^{t,x,j}_s\, \mathrm dW^j_s,\\
&\mathcal R_s^{t,x}\nabla_i u^{t,x}_s+\mathcal S_s^{t,x}\nabla_i X^{t,x}_s+B(s)^\top \nabla_i Y^{t,x}_s+D(s)^\top\nabla_i Z^{t,x}_s=0,\quad s\in [t,T],\\
& \nabla_i X^{t,x}_t = e_i, ~~~~~~~~\nabla_i Y^{t,x}_T=\mathcal G \nabla X^{t,x}_T,
\end{aligned}
\right.
\end{equation} 
where
\begin{equation}\label{3:not2}
\begin{pmatrix}
\mathcal Q^{t,x} & (\mathcal S^{t,x})^\top \\
\mathcal S^{t,x} & \mathcal R^{t,x}
\end{pmatrix}:=\begin{pmatrix}
\mathcal Q^{t,x,\bar u^{t,x}}& (\mathcal S^{t,x,\bar u^{t,x}})^\top\\
\mathcal S^{t,x,\bar u^{t,x}}& \mathcal R^{t,x,\bar u^{t,x}}
\end{pmatrix}, ~~~\mathcal G^{t,x}:=\mathcal G^{t,x,\bar u^{t,x}}.
\end{equation} 
The Hamiltonian system \eqref{3:lhsg} admits a unique solution $(\nabla_i X^{t,x},\nabla_i Y^{t,x},\nabla_i Z^{t,x},\nabla_i u^{t,x})\in\mathcal M[t,T]$ when applying Theorem \ref{th:wp} and Lemma \ref{lem:2oc} again. 
Moreover, there exists a constant $K>0$ such that
\begin{equation}\label{3:lhsg:est}
\mathbb E\bigg[\sup_{t\leq s\leq T}\big(\vert\nabla_i X^{t,x}_s\vert^2+\vert\nabla_i Y^{t,x}_s\vert^2\big)+\int_t^T(\vert\nabla_i Z^{t,x}_s\vert^2+\vert\nabla_i u^{t,x}_s\vert^2)\, \mathrm ds\bigg]\leq K.
\end{equation}

Next, we provide the convergence of $(\Delta_i^h X^{t,x},\Delta_i^h Y^{t,x},\Delta_i^h Z^{t,x},\Delta_i^h u^{t,x})$ and the continuous dependence with respect to $(t,x)$ of the derivatives $(\nabla_i X^{t,x},\nabla_i Y^{t,x},\nabla_i Z^{t,x},\nabla_i u^{t,x})$.
\begin{proposition}\label{prop:cvg}
Under Assumptions \ref{A3} and \ref{A4}, for any $(t,x)\in[0,T]\times\mathbb R^n$ and any $i\in\{1,...,n\}$, we have
\begin{equation}\label{prop:cvg:1}
\begin{aligned}
\lim_{h\to0}\mathbb E\bigg[\sup_{t\leq s\leq T}\left[\vert\Delta^h_iX^{t,x}_s-\nabla_i X^{t,x}_s\vert^2+\vert\Delta^h_iY^{t,x}_s-\nabla_i Y^{t,x}_s\vert^2\right]+\\
\int_t^T(\vert\Delta^h_iZ^{t,x}_s-\nabla_i Z^{t,x}_s\vert^2+\vert\Delta^h_iu^{t,x}_s-\nabla_i u^{t,x}_s\vert^2)\, \mathrm ds\bigg]=0
\end{aligned}
\end{equation}
and
\begin{equation}\label{prop:cvg:2}
\begin{aligned}
\lim_{(t',x')\to (t,x)}\mathbb E\bigg[\sup_{0\leq s\leq T}\left[\vert\nabla_iX^{t',x'}_s-\nabla_i X^{t,x}_s\vert^2+\vert\nabla_iY^{t',x'}_s-\nabla_i Y^{t,x}_s\vert^2\right]+\\
\int_0^T(\vert\nabla_iZ^{t',x'}_s-\nabla_i Z^{t,x}_s\vert^2+\vert\nabla_iu^{t',x'}_s-\nabla_i u^{t,x}_s\vert^2)\, \mathrm ds\bigg]=0.
\end{aligned}
\end{equation}
\end{proposition}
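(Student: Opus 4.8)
The plan is to handle both limits \eqref{prop:cvg:1} and \eqref{prop:cvg:2} by one device: in each case the relevant difference of quadruples solves a \emph{linear} Hamiltonian system of exactly the form treated in Theorem \ref{th:wp}, whose homogeneous coefficients satisfy the uniform convexity of Lemma \ref{lem:2oc} with the \emph{same} constant $\delta$ (this was already checked for \eqref{3:lhsd} and \eqref{3:lhsg}); hence the a priori estimate \eqref{2:est} applies with a constant $K$ independent of the perturbation parameter, and everything reduces to showing that the inhomogeneous forcing tends to $0$.

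For \eqref{prop:cvg:1}, write $\delta X:=\Delta_i^hX^{t,x}-\nabla_iX^{t,x}$ and likewise $\delta Y,\delta Z,\delta u$. Subtracting \eqref{3:lhsg} from \eqref{3:lhsd} while keeping the $h$-dependent matrices $\mathcal Q^{t,x,h},\mathcal S^{t,x,h},\mathcal R^{t,x,h},\mathcal G^{t,x,h}$ in the homogeneous part, one finds that $(\delta X,\delta Y,\delta Z,\delta u)$ solves the linear Hamiltonian system with those coefficients, with vanishing initial datum $\delta X_t=0$, and with forcing terms that are products of the coefficient mismatches $\mathcal Q^{t,x,h}-\mathcal Q^{t,x}$, $\mathcal S^{t,x,h}-\mathcal S^{t,x}$, $\mathcal R^{t,x,h}-\mathcal R^{t,x}$ (in the drift and the algebraic relation) and $\mathcal G^{t,x,h}-\mathcal G^{t,x}$ (in the terminal condition) times the \emph{fixed} limit processes $\nabla_iX^{t,x}$ and $\nabla_iu^{t,x}$. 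The decisive point of this bookkeeping is that every mismatch multiplies a process that does not depend on $h$. Applying \eqref{2:est} bounds the left side of \eqref{prop:cvg:1} by $K$ times the $L^2$-norms of this forcing.

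The main obstacle---and the reason the difference is organised this way---is the convergence of the forcing. Each mismatch, e.g.\ $\mathcal Q^{t,x,h}-\mathcal Q^{t,x}$, is bounded by twice the uniform bound on the second derivatives of $l$ and converges to $0$ in measure on $\Omega\times[t,T]$: indeed $h\Delta_i^hX^{t,x}=\bar X^{t,x+he_i}-\bar X^{t,x}$ and $h\Delta_i^hu^{t,x}=\bar u^{t,x+he_i}-\bar u^{t,x}$ tend to $0$ in $L^2$ by \eqref{prop:est:2}, so the arguments of $\mathrm D^2_{xx}l$ defining $\mathcal Q^{t,x,h}$ converge to those of $\mathcal Q^{t,x}$, and joint continuity of the second derivatives gives convergence in measure. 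Since the mismatch multiplies the fixed $\nabla_iX^{t,x}$ (resp.\ $\nabla_iu^{t,x}$), the integrand is dominated by the fixed $L^1$ function $CM^2|\nabla_iX^{t,x}|^2$ (resp.\ $CM^2|\nabla_iu^{t,x}|^2$), which is integrable by \eqref{3:lhsg:est}; convergence in measure together with this fixed domination yields $L^1$-convergence of the integrands, i.e.\ the forcing tends to $0$ in $L^2$. It is precisely the use of the fixed limit process that supplies a dominating function \emph{independent of} $h$: had the mismatch multiplied $\Delta_i^hX^{t,x}$ instead, its mere $L^2$-boundedness would not give uniform integrability, and one would be pushed into the $L^p$ ($p>2$) estimates that are unavailable for these systems. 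This proves \eqref{prop:cvg:1}.

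For the continuous dependence \eqref{prop:cvg:2} assume $t\le t'$ without loss of generality. On $[t',T]$ I repeat the argument with $\delta X:=\nabla_iX^{t',x'}-\nabla_iX^{t,x}$, etc.: the difference solves the linear Hamiltonian system with the $(t',x')$-coefficients, the forcing is again a mismatch $\mathcal Q^{t',x'}-\mathcal Q^{t,x}=\mathrm D^2_{xx}l(\cdot,\bar X^{t',x'},\bar u^{t',x'})-\mathrm D^2_{xx}l(\cdot,\bar X^{t,x},\bar u^{t,x})$ (which $\to0$ in measure by \eqref{prop:est:2} and continuity) times the fixed $\nabla_iX^{t,x}$, so the same domination argument kills it; the terminal forcing $(\mathcal G^{t',x'}-\mathcal G^{t,x})\nabla_iX^{t,x}_T$ is handled identically; and the nonzero initial datum $\delta X_{t'}=e_i-\nabla_iX^{t,x}_{t'}$ satisfies $\mathbb E|e_i-\nabla_iX^{t,x}_{t'}|^2\to0$ because $\nabla_iX^{t,x}$ has a.s.\ continuous paths with $\nabla_iX^{t,x}_t=e_i$ and is $L^2$-bounded by \eqref{3:lhsg:est}. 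Thus \eqref{2:est} on $[t',T]$ controls $\delta$ there and in particular gives $\mathbb E|\nabla_iY^{t',x'}_{t'}-\nabla_iY^{t,x}_{t'}|^2\to0$. It then remains to dispose of the shrinking interval $[0,t']$, where $\nabla_iX^{t',x'}$ is frozen at its initial extension while $\nabla_iX^{t,x}$ moves only on $[t,t']$; combining the path continuity of $\nabla_iX^{t,x}$ and $\nabla_iY^{t,x}$ at the initial time $t$, the vanishing of the tails $\int_t^{t'}(|\nabla_iZ^{t,x}|^2+|\nabla_iu^{t,x}|^2)\,\mathrm ds\to0$, and the convergence at $s=t'$ just obtained, closes the estimate on $[0,t']$ and yields \eqref{prop:cvg:2}. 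The principal bookkeeping difficulty here is the time-horizon mismatch, resolved by splitting $[0,T]$ into $[0,t']$ and $[t',T]$ and invoking path continuity at the initial time.
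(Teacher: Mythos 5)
Your treatment of \eqref{prop:cvg:1} coincides with the paper's proof: you form the same difference system, keeping the $h$-dependent matrices $\mathcal Q^{t,x,h},\mathcal S^{t,x,h},\mathcal R^{t,x,h},\mathcal G^{t,x,h}$ in the homogeneous part, with the same remainders (coefficient mismatches multiplied by the fixed limit processes $\nabla_iX^{t,x},\nabla_iu^{t,x}$), you invoke the uniform estimate \eqref{2:est}, and you pass to the limit by convergence in measure plus a dominating function supplied by the boundedness of the second derivatives and \eqref{3:lhsg:est}. Your remark that the mismatch must multiply the limit process rather than $\Delta_i^hX^{t,x}$ is precisely the point of the paper's bookkeeping.

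For \eqref{prop:cvg:2}, however, there is a genuine gap: the reduction ``assume $t\le t'$ without loss of generality'' is not legitimate, because your argument is not symmetric in $(t,x)$ and $(t',x')$. Two asymmetries matter. First, the domination argument forces the coefficient mismatches to multiply the \emph{fixed} $(t,x)$-processes (as you yourself stress), so the roles of the two pairs cannot simply be exchanged. Second, in your split-at-the-later-time scheme, when $t\le t'$ the nonzero initial datum is $e_i-\nabla_iX^{t,x}_{t'}$, and it converges by path continuity of the fixed process together with the dominating bound $\sup_s\vert\nabla_iX^{t,x}_s\vert\in L^2$; but when $t'<t$ the corresponding datum at the split time $t$ is $\nabla_iX^{t',x'}_t-e_i$, which involves the \emph{moving} solution shortly after its own start. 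Estimating it via the forward equation produces, from the stochastic integral, the term $\mathbb E\int_{t'}^{t}\vert\nabla_iu^{t',x'}_s\vert^2\,\mathrm ds$, and sending this to $0$ as $t'\uparrow t$ requires equi-integrability in time of the family $\{\vert\nabla_iu^{t',x'}\vert^2\}$, which does not follow from the uniform $L^2$ bound \eqref{3:lhsg:est}; under Assumption \ref{A4} there is no pointwise representation of $\nabla_iu^{t',x'}$, and the $L^p$ ($p>2$) estimates that would yield uniform integrability are exactly what the paper declares unavailable. The paper's proof avoids this by extending \emph{both} quadruples by freezing and writing a single linear Hamiltonian system for the difference on $[t\wedge t',T]$ with \emph{zero} initial datum at $t\wedge t'$; the time mismatch then enters only through the forcings $\Pi^1,\dots,\Pi^4$, whose extra terms are uniformly bounded quantities multiplied by $\mathbbm 1_{[t\wedge t',t\vee t']}$, so their squared $L^2$ norms are of order $\vert t-t'\vert$ in either ordering. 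Replacing your split-interval step by this device (or supplying a uniform small-time estimate for $\mathbb E\int_{t'}^{t}\vert\nabla_iu^{t',x'}_s\vert^2\,\mathrm ds$) closes the gap; the rest of your argument in the case $t\le t'$, including the handling of $[0,t']$ via path continuity and absolute continuity of the integrals, is sound.
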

\begin{proof}
To prove \eqref{prop:cvg:1}, we denote $\hat X^h:=\Delta^h_iX^{t,x}-\nabla_i X^{t,x}$, similarly for $\hat Y^h,\hat Z^h,\hat u^h$. 
Then, it is clear that $(\hat X^h,\hat Y^h,\hat Z^h,\hat u^h)$ satisfies the following linear Hamiltonian system, which admits a similar form to \eqref{3:lhsd}:

\begin{equation}\label{pf:prop:cvg:1}
\left\{
\begin{aligned}
&\mathrm d\hat X^h_s = \big[A(s)\hat X^h_s+B(s)\hat u^h_s\big]\, \mathrm ds+\sum_{j=1}^d \big[C_j(s)\hat X^h_s+D_j(s)\hat u^h_s\big]\, \mathrm dW^j_s,\\
&\mathrm d \hat Y^h_s=-\big[A(s)^\top \hat Y^h_s +C(s)^\top \hat Z^h_s+\mathcal Q^{t,x,h}_s\hat X^h_s+(\mathcal S^{t,x,h}_s)^\top\hat u^h_s\\
&\qquad\qquad+R^{h,1}_s\big]\, \mathrm ds+\sum_{j=1}^{d}\hat Z^{h,j}_s\, \mathrm dW^j_s,\\
&\mathcal R^{t,x,h}_s\hat u^h_s+\mathcal S^{t,x,h}_s\hat X^h_s+B(s)^\top\hat Y^h_s+D(s)^\top\hat Z^h_s+R^{h,2}_s=0,\quad s\in [t,T],\\
&\hat X^h_t = 0, ~~~~~~~~\hat Y^h_T=\mathcal G^{t,x,h}\hat X^h_T+R^{h,3},
\end{aligned}
\right.
\end{equation} 
where the remainder terms are
\begin{equation}\label{pf:prop:cvg:2}
\begin{aligned}
R^{h,1}_s&=(\mathcal Q^{t,x,h}_s-\mathcal Q^{t,x}_s)\nabla_iX^{t,x}_s+(\mathcal S_s^{t,x,h}-\mathcal S_s^{t,x})^\top\nabla_iu^{t,x}_s,\\
R^{h,2}_s&=(\mathcal R^{t,x,h}_s-\mathcal R^{t,x}_s)\nabla_iu^{t,x}_s+(\mathcal S^{t,x,h}_s-\mathcal S^{t,x}_s)\nabla_iX^{t,x}_s,\\
R^{h,3}  &=(\mathcal G^{t,x,h}-\mathcal G^{t,x})\nabla_iX^{t,x}_T.
\end{aligned}
\end{equation}
Applying the estimate \eqref{2:est}, we have, for any $h\in\mathbb R/\{0\}$, 
\begin{equation}
\begin{aligned}
&\mathbb E\bigg[\sup_{t\leq s\leq T}[\vert\hat X^h_s\vert^2+\vert\hat Y^h_s\vert^2]+\int_t^T[\vert\hat Z^h_s\vert^2+\vert\hat u^h_s\vert^2]\, \mathrm ds\bigg]\\
\leq &K\mathbb E\left[\int_t^T[\vert R^{h,1}_s\vert^2+\vert R^{h,2}_s\vert^2]\, \mathrm ds+\vert R^{h,3}\vert^2\right].
\end{aligned}
\end{equation}
By the estimate \eqref{prop:est:2}, $(\bar X^{t,x+he_i},\bar u^{t,x+he_i})$ converges to $(\bar X^{t,x},\bar u^{t,x})$ in $L^2$ sense as $h\to0$. Since $\mathrm D^2_{xx}l,\mathrm D^2_{xu}l,\mathrm D^2_{uu}l,\mathrm D^2_{xx}g$ are continuous in $(x,u)$ and by \eqref{3:lhsg:est}, we have, for $k=1,2,3$,
$$
\lim_{h\to0}R^{h,k}=0,~~~\mbox{in measure }\, \mathrm dt\times \, \mathrm d\mathbb P.
$$
By the boundedness of the second-order derivatives, we deduce \eqref{prop:cvg:1} by applying Lebesgue's dominated convergence theorem.

Finally, to prove \eqref{prop:cvg:2}, for any $(t,x)\in[0,T]\times\mathbb{R}^n$ and $i\in\{1,2,...,n\}$, we set
$$
(\nabla_iX^{t,x}_s,\nabla_iY^{t,x}_s,\nabla_iZ^{t,x}_s,\nabla_iu^{t,x}_s)=(e_i,\nabla_iY^{t,x}_t,0,0),
$$
when $s\leq t$. 
Then, for any $(t,x),(t',x')\in[0,T]\times\mathbb{R}^n$, $(\nabla_i \hat X,\nabla_i \hat Y,\nabla_i \hat Z,\nabla_i \hat u)$ defined by
$$
(\nabla_iX^{t',x'}-\nabla_iX^{t,x},\nabla_iY^{t',x'}-\nabla_iY^{t,x},\nabla_iZ^{t',x'}-\nabla_iZ^{t,x},\nabla_iu^{t',x'}-\nabla_iu^{t,x})
$$
satisfies the following linear Hamiltonian system:
\begin{equation*}
\left\{
\begin{aligned}
&\mathrm d\nabla_i \hat X_s = \big[A(s)\nabla_i \hat X_s+B(s)\nabla_i \hat u_s+\Pi^1_s\big]\, \mathrm ds\\
&\qquad\qquad+\sum_{j=1}^d \big[C_j(s)\nabla_i \hat X_s+D_j(s)\nabla_i \hat u_s+\Pi^{2,j}_s\big]\, \mathrm dW^j_s,\\
&\mathrm d \nabla_i \hat Y_s=-\big[A(s)^\top \nabla_i \hat Y_s +C(s)^\top \nabla_i \hat Z_s+\mathcal Q^{t',x'}_s\nabla_i \hat X_s\\
&\qquad\qquad+(\mathcal S^{t',x'}_s)^\top\nabla_i \hat u_s+\Pi^3_s\big]\, \mathrm ds+\sum_{j=1}^{d}\nabla_i \hat Z_s\, \mathrm dW^j_s,\\
&\mathcal R^{t',x'}_s\nabla_i \hat u_s+\mathcal S^{t',x'}_s\nabla_i \hat X_s+B(s)^\top\nabla_i \hat Y_s+D(s)^\top\nabla_i \hat Z_s+\Pi^4_s=0,\quad s\in [t,T],\\
&\nabla_i \hat X_{t\land t'} = 0, ~~~~~~\nabla_i \hat Y_T=\mathcal G^{t',x'}\nabla_i \hat X_T+\Pi^5,
\end{aligned}
\right.
\end{equation*} 
where $\Pi^5=(\mathcal{G}^{t',x'}-\mathcal{G}^{t,x})\nabla_iX^{t,x}_T$, 
\begin{align*}
&\Pi^1_s=-A(s)e_i\mathbbm 1_{[t,t']}(s),~~\Pi^{2,j}=-C_j(s)e_i\mathbbm 1_{[t,t']}(s),~j=1,...,d,\\
&\Pi^3_s=(\mathcal{Q}^{t',x'}_s-\mathcal{Q}^{t,x}_s)\nabla_iX^{t,x}_s+(\mathcal{S}^{t',x'}_s-\mathcal{S}^{t,x}_s)^\top\nabla_iu^{t,x}_s-\mathcal{Q}^{t',x'}_se_i\mathbbm 1_{[t,t']}(s),\\
&\Pi^4_s=(\mathcal{R}^{t',x'}_s-\mathcal{R}^{t,x}_s)\nabla_iu^{t,x}_s+(\mathcal{S}^{t',x'}_s-\mathcal{S}^{t,x}_s)\nabla_iX^{t,x}_s-\mathcal{S}^{t',x'}_se_i\mathbbm 1_{[t,t']}(s),
\end{align*}
when $t'\geq t$ and
\begin{align*}
&\Pi^1_s=A(s)e_i\mathbbm 1_{[t',t]}(s),~~\Pi^{2,j}=C_j(s)e_i\mathbbm 1_{[t',t]}(s),~j=1,...,d,\\
&\Pi^3_s=(\mathcal{Q}^{t',x'}_s-\mathcal{Q}^{t,x}_s)\nabla_iX^{t,x}_s+(\mathcal{S}^{t',x'}_s-\mathcal{S}^{t,x}_s)^\top\nabla_iu^{t,x}_s+\mathcal{Q}^{t,x}_se_i\mathbbm 1_{[t',t]}(s),\\
&\Pi^4_s=(\mathcal{R}^{t',x'}_s-\mathcal{R}^{t,x}_s)\nabla_iu^{t,x}_s+(\mathcal{S}^{t',x'}_s-\mathcal{S}^{t,x}_s)\nabla_iX^{t,x}_s+\mathcal{S}^{t,x}_se_i\mathbbm 1_{[t',t]}(s),
\end{align*}
when $t'< t$. 
Noting that the form of the above Hamiltonian system is similar to \eqref{3:lhsg}, it satisfies the assumptions of Theorem \ref{th:wp}.
By the estimate \eqref{2:est}, we derive
\begin{equation}
\begin{aligned}
&\mathbb E\bigg[\sup_{0\leq s\leq T}[\vert\nabla_i \hat X_s\vert^2+\vert\nabla_i \hat Y_s\vert^2]+\int_0^T[\vert\nabla_i \hat Z_s\vert^2+\vert\nabla_i \hat u_s\vert^2]\, \mathrm ds\bigg]\\
\leq& K\mathbb E\left[\sum_{k=1}^4\int_{t\land t'}^T\vert \Pi_s^k\vert^2\, \mathrm ds+\vert \Pi^5\vert^2\right].
\end{aligned}
\end{equation}
Since $A,C,\mathcal{Q}^{t,x},\mathcal{Q}^{t',x'},\mathcal{S}^{t,x},\mathcal{S}^{t',x'}$ are uniformly bounded and the rest parts are similar to \eqref{pf:prop:cvg:2}, by \eqref{prop:est:2} and \eqref{3:lhsg:est} again, we deduce, for $k=1,2,3,4,5$,
$$
\lim_{(t',x')\to(t,x)}\Pi^k=0, ~~\mbox{in measure }\, \mathrm dt\times \, \mathrm d\mathbb P.
$$
By applying Lebesgue's dominated convergence theorem again, we conclude \eqref{prop:cvg:2}.
\end{proof}
At the end of this section, we provide the Malliavin derivatives for $(\bar X^{t,x},\bar Y^{t,x},\bar Z^{t,x},\bar u^{t,x})$, prepared for the continuous version of $(\bar Z^{t,x},\bar u^{t,x})$. 
Specifically, the following notations are introduced. 
(One could refer to El Karoui et al. \cite{kpq97} and Nualart \cite{n06} for example.) 
\begin{itemize}
  \item Let $C^{k}_b(\mathbb R^n;\mathbb R^q)$ be the set of all $C^k$ functions from $\mathbb R^n$ to $\mathbb R^q$ all of whose derivatives of order less than and equal to $k$ are bounded.
  \item Let $\mathscr S$ be the set of random variables $\xi$ of the form $\xi=\phi(W(h^1),...,W(h^n))$ where $\phi\in C^{\infty}_b(\mathbb R^n;\mathbb R)$, $W(h^i)=\int^T_0\langle h^i(s),\, \mathrm dW_s\rangle$ and $h^1,...,h^n\in L^{2}(0,T;\mathbb R^d)$.
  \item If $\xi\in\mathscr S$ is of the above form, its Malliavin derivative is defined as the following $\mathbb R^d$ valued process:
$$
\mathbf{D}_{\theta}\xi=\sum_{j=1}^{k} \frac{\partial\phi}{\partial x_j}(W(h^1),...,W(h^k))h^j(\theta), ~~~~0\leq\theta \leq T.
$$
The $i$-th component of $\mathbf{D}_{\theta}\xi$ is denoted by $\mathbf{D}^i_{\theta}\xi$, where $i=1,...,d$. 
For any $\xi\in\mathscr S$, we define its $(1,2)$-norm by:
$$
\Vert\xi\Vert_{\mathbb D_{1,2}}=\left(\mathbb E\left[\vert\xi\vert^2+\int_0^T\vert\mathbf{D}_{\theta}\xi\vert^2\, \mathrm d\theta\right]\right)^{1/2}.
$$
It is known that the operator $\mathbf D$ has a closed extension to the space $\mathbb D_{1,2}$, the closure of $\mathscr S$ with respect to the $(1,2)$-norm $\Vert\cdot\Vert_{\mathbb D_{1,2}}$.
  \item Let $\mathbb L^a_{1,2}(0,T;\mathbb R^n)$ be the set of $\mathbb R^n$ valued $\mathbb F$-adapted processes $\{\phi_t(\omega);(t,\omega)\in[0,T]\times\Omega\}$ satisfying
 \begin{enumerate}[(i)]
  \item For a.e. $t\in[0,T]$, $\phi_t(\cdot)\in(\mathbb D_{1,2})^n$.
  \item $(t,\omega)\mapsto\mathbf D_{\cdot}\phi_t(\omega)\in L^2(0,T;\mathbb R^{n\times d})$ admits an adapted version.
  \item $\Vert\phi\Vert_{\mathbb L^a_{1,2}} :=\left(\mathbb E\left[\int_0^T\vert\phi_s\vert^2\, \mathrm ds+\int_0^T\int_0^T\vert\mathbf D_{\theta}\phi_s\vert^2\, \mathrm d\theta\, \mathrm ds\right]\right)^{1/2} <\infty$.
 \end{enumerate}
\end{itemize}

The following lemma provides the Malliavin derivatives for the solution $(\bar X^{t,x},\bar Y^{t,x},\bar Z^{t,x},\\\bar u^{t,x})$ to the Hamiltonian system \eqref{2:HSo}.
\begin{lemma}\label{lem:md}
Under Assumptions \ref{A3} and \ref{A4}, for any $(t,x)\in[0,T]\times\mathbb R^n$, $(\bar X^{t,x},\bar Y^{t,x},\bar Z^{t,x},\bar u^{t,x})$ belongs to $\mathbb L^a_{1,2}(0,T;\mathbb R^{n+n+nd+m})$. 
Moreover, for any $i=1,...,d$, the following equation admits a unique solution:
\begin{equation}
\left\{
\begin{aligned}
& \mathbf D^i_\theta \bar X^{t,x}_s = C_i(\theta) \bar X^{t,x}_{\theta}+D_i(\theta) \bar u^{t,x}_{\theta}+\sigma_i(\theta)+\int_{\theta}^s\big[A(r)\mathbf D^i_\theta \bar X^{t,x}_r+B(r)\mathbf D^i_\theta \bar u^{t,x}_r\big]\, \mathrm dr\\
&+\sum_{j=1}^d\int_{\theta}^s \big[C_j(r)\mathbf D^i_\theta \bar X^{t,x}_r+D_j(r)\mathbf D^i_\theta \bar u^{t,x}_r\big]\, \mathrm dW^j_r,\\
& \mathbf D^i_\theta \bar Y^{t,x}_s=\mathcal G^{t,x} \mathbf D^i_\theta \bar X^{t,x}_T+\int_s^T\bigg[A(r)^\top \mathbf D^i_\theta \bar Y^{t,x}_r +C(r)^\top \mathbf D^i_\theta \bar Z^{t,x}_r+\mathcal Q_r^{t,x}\mathbf D^i_\theta \bar X^{t,x}_r\\
&+(\mathcal S_r^{t,x})^\top \mathbf D^i_\theta \bar u^{t,x}_r\bigg]\, \mathrm dr-\sum_{j=1}^{d}\int_s^T\mathbf D^i_\theta \bar Z^{t,x,j}_r\, \mathrm dW^j_r,\\
&\mathcal R_s^{t,x}\mathbf D^i_\theta \bar u^{t,x}_s+\mathcal S_s^{t,x}\mathbf D^i_\theta \bar X^{t,x}_s+B(s)^\top \mathbf D^i_\theta\bar  Y^{t,x}_s+D(s)^\top \mathbf D^i_\theta\bar  Z^{t,x}_s=0,
\end{aligned}
\right.
\end{equation} 
which provides a version of $(\mathbf D_{\theta}^i\bar X^{t,x},\mathbf D_{\theta}^i\bar Y^{t,x},\mathbf D^i_{\theta}\bar Z^{t,x},\mathbf D^i_{\theta}\bar u^{t,x})$ when $s\in[\theta,T]$.
When $s\in[0,\theta)$, $(\mathbf D^i_{\theta}\bar X^{t,x}_s,\mathbf D^i_{\theta}\bar Y^{t,x}_s,\mathbf D^i_{\theta}\bar Z^{t,x}_s,\mathbf D^i_{\theta}\bar u^{t,x}_s)=0$.
Moreover, $\{\mathbf D_s\bar  Y^{t,x}_s\}_{s\in[t,T]}$ is a version of $\{\bar Z^{t,x}_s\}_{s\in[t,T]}$.
\end{lemma}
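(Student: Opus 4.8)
The plan is to transfer the gradient-descent fixed-point construction of Theorem \ref{th:wp} to the Malliavin-Sobolev space $\mathbb L^a_{1,2}$, and then to identify the limiting equations by commuting the operator $\mathbf D^i_\theta$ with the three relations of the Hamiltonian system \eqref{2:HSo}. First I would revisit the iteration $u^{k+1}=\mathcal T[u^k]$ with $\mathcal T[u]=u-\eta\mathcal D[u]$, which converges to $\bar u^{t,x}$ in $\mathscr U_{\mathrm{ad}}[t,T]$. Starting from $u^0\equiv0\in\mathbb L^a_{1,2}$, I would show by induction that every iterate stays in $\mathbb L^a_{1,2}$: if $u^k\in\mathbb L^a_{1,2}$, then the state $X^{u^k}$ solving the linear SDE \eqref{1:SDEt} is Malliavin differentiable by the standard stability theory for SDEs (Nualart \cite{n06}), the adjoint pair $(Y^{u^k},Z^{u^k})$ solving the linear BSDE \eqref{2:BSDE} is Malliavin differentiable by El Karoui et al. \cite{kpq97}, and since $\mathcal D[u^k]=B^\top Y^{u^k}+D^\top Z^{u^k}+\mathrm D_ul(\cdot,X^{u^k},u^k)$ is a $C^1$, globally Lipschitz image of Malliavin differentiable processes (Assumption \ref{A3}), we get $u^{k+1}\in\mathbb L^a_{1,2}$.

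The main obstacle is a bound on $\Vert u^k\Vert_{\mathbb L^a_{1,2}}$ that is uniform in $k$, since this is what upgrades mere $L^2$ convergence into the membership $\bar u^{t,x}\in\mathbb L^a_{1,2}$. I would obtain it by noting that, for fixed $i$ and $\theta$, the recursion $\mathbf D^i_\theta u^{k+1}=\mathbf D^i_\theta u^k-\eta\,\mathbf D^i_\theta\mathcal D[u^k]$ is itself a gradient-descent iteration for a \emph{differentiated} problem: the map $w\mapsto\tilde{\mathcal D}^k[w]$ obtained by linearizing $\mathbf D^i_\theta\mathcal D[u^k]$ in the Malliavin derivative $w=\mathbf D^i_\theta u^k$ is the Riesz representative of the second variation of the cost functional along $u^k$, whose quadratic form is exactly the left-hand side of \eqref{lem:2oc:1} with the matrices $(\mathcal Q,\mathcal S,\mathcal R,\mathcal G)$ evaluated along $(\bar X^{t,x},\bar u^{t,x})$-type trajectories. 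By Lemma \ref{lem:2oc} this form is uniformly convex with the same constant $\delta$, while the coefficients are uniformly bounded by Assumption \ref{A3}; hence $w\mapsto w-\eta\,\tilde{\mathcal D}^k[w]$ is a contraction with the same rate $1-\delta^2/K$ uniformly in $k$, and its inhomogeneous part, coming from the $\mathcal F_\theta$-measurable jump $\xi^i_\theta:=C_i(\theta)\bar X^{t,x}_\theta+D_i(\theta)\bar u^{t,x}_\theta+\sigma_i(\theta)$ and from the $u^k$-dependence of the coefficients, is bounded uniformly in $k$ because $u^k\to\bar u^{t,x}$. This yields $\sup_k\Vert u^k\Vert_{\mathbb L^a_{1,2}}<\infty$. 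Closedness of $\mathbf D$ on $\mathbb D_{1,2}$ then gives $\bar u^{t,x}\in\mathbb L^a_{1,2}$, and propagation through \eqref{1:SDEt} and \eqref{2:BSDE} puts the whole quadruple in $\mathbb L^a_{1,2}(0,T;\mathbb R^{n+n+nd+m})$.

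Having secured Malliavin differentiability, I would apply $\mathbf D^i_\theta$ to the forward SDE, the adjoint BSDE and the stationarity constraint in \eqref{2:HSo}. The commutation rules give that $\mathbf D^i_\theta\bar X^{t,x}_s$ vanishes for $s<\theta$ and, for $s\ge\theta$, starts from the diffusion coefficient $\xi^i_\theta$ evaluated at $\theta$ and solves the stated linear SDE; that $\mathbf D^i_\theta\bar Y^{t,x}$ solves the stated linear BSDE, where the chain rule applied to $\mathrm D_xl(s,\bar X^{t,x}_s,\bar u^{t,x}_s)$ produces the coefficients $\mathcal Q^{t,x}_s\mathbf D^i_\theta\bar X^{t,x}_s+(\mathcal S^{t,x}_s)^\top\mathbf D^i_\theta\bar u^{t,x}_s$; and that the constraint differentiates into the stated algebraic relation with $\mathcal R^{t,x}_s,\mathcal S^{t,x}_s$. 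This is precisely the system in the statement, and for each fixed $(i,\theta)$ it has the same form as \eqref{3:lhsg} but posed on $[\theta,T]$ with the $\mathcal F_\theta$-measurable initial value $\xi^i_\theta\in L^2_{\mathcal F_\theta}(\Omega;\mathbb R^n)$. The extension of Theorem \ref{th:wp} to initial pairs $(\theta,\xi)\in[0,T]\times L^2_{\mathcal F_\theta}(\Omega;\mathbb R^n)$, together with the uniform convexity furnished by Lemma \ref{lem:2oc}, then guarantees a unique solution in $\mathcal M[\theta,T]$, which I extend by zero on $[0,\theta)$ in accordance with the causality of the forward dynamics.

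Finally, the identity $\mathbf D_s\bar Y^{t,x}_s=\bar Z^{t,x}_s$ on $[t,T]$ is the standard representation of the martingale integrand of a Malliavin differentiable BSDE solution (El Karoui et al. \cite{kpq97}), applied to the adjoint component of \eqref{2:HSo} once its terminal value $\mathrm D_xg(\bar X^{t,x}_T)$ and generator are known to be Malliavin differentiable, which is exactly what the previous steps establish. I expect the uniform $\mathbb L^a_{1,2}$ estimate of the second paragraph, and in particular the verification that the differentiated iteration contracts with a rate independent of $k$, to be the delicate technical point; the remaining identifications are formal applications of the Malliavin chain rule combined with the well-posedness already proven for linear Hamiltonian systems.
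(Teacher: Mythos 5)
Your proposal is correct in substance and follows, at its core, the same route as the paper's Appendix~\ref{APP-A}: iterate the contraction $\mathcal T[u]=u-\eta\mathcal D[u]$ inside $\mathbb L^a_{1,2}$, observe that $\mathcal T$ preserves $\mathbb L^a_{1,2}$ via El Karoui et al.\ \cite{kpq97} and Nualart \cite{n06}, exploit the second-order uniform convexity of Lemma \ref{lem:2oc} to make the \emph{differentiated} iteration contract at the $k$-independent rate $1-\delta^2/K$ (this is exactly the paper's estimate \eqref{app:est3}, proved by applying It\^o's formula to $\langle Y^{k,\theta}_s-\mathbf D_\theta Y^k_s, X^{k,\theta}_s-\mathbf D_\theta X^k_s\rangle$), solve the stated linear Hamiltonian system by Theorem \ref{th:wp} with the $\mathcal F_\theta$-measurable initial value $C_i(\theta)\bar X^{t,x}_\theta+D_i(\theta)\bar u^{t,x}_\theta+\sigma_i(\theta)$, and conclude $\mathbf D_s\bar Y^{t,x}_s=\bar Z^{t,x}_s$ from El Karoui et al.\ \cite[Lemma 5.1]{kpq97}. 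One precision: the matrices in \eqref{lem:2oc:1} for the $k$-th step must be evaluated along $(X^k,u^k)$, not along ``$(\bar X^{t,x},\bar u^{t,x})$-type trajectories''; Lemma \ref{lem:2oc} does supply this for every admissible $u^k$ with the same $\delta$, so the claim is fine once stated exactly.

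Where you genuinely deviate is the limit passage, and this is also where your sketch needs repair. The paper introduces the intermediate decoupled FBSDE \eqref{a.5} --- driven by the limiting control $u^{\theta,i}$ but with coefficients frozen along $(X^k,u^k)$ and the \emph{same} initial jump as $\mathbf D_\theta X^k$, so that the difference has zero initial condition and the convexity argument applies cleanly --- and thereby proves that $\{u^k\}$ is Cauchy in $\Vert\cdot\Vert_{\mathbb L^a_{1,2}}$ with $\mathbf D_\theta u^k\to u^\theta$ strongly; the identification of the Malliavin derivatives with the solution of \eqref{app:lh2} then requires no further argument. You instead stop at a uniform bound $\sup_k\Vert u^k\Vert_{\mathbb L^a_{1,2}}<\infty$ and invoke ``closedness of $\mathbf D$''. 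Closedness alone does not do this: it requires strong convergence of the derivatives, which a uniform bound does not give. The correct tool is the weak-compactness lemma (Nualart \cite[Lemma 1.2.3]{n06}): $L^2$-convergence of $u^k$ plus a uniform $\mathbb D_{1,2}$-bound yield $\bar u^{t,x}\in\mathbb D_{1,2}$ with only \emph{weak} convergence of $\mathbf D u^k$. With that fix, your subsequent step --- identifying $\mathbf D^i_\theta$ of the limit by differentiating the coupled system \eqref{2:HSo} directly (chain rule on the stationarity constraint using the bounded continuous second derivatives of Assumption \ref{A3}, then uniqueness from Theorem \ref{th:wp} and Lemma \ref{lem:2oc}) --- is legitimate and slightly lighter on estimates than the paper's Cauchy argument, at the price of these extra justifications; the paper's stronger convergence buys the identification for free.
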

The proof of this lemma is lengthy but similar to Wu et al. \cite{wxy25}, which is put in Appendix \ref{APP-A}.

\section{Properties of value function}\label{sec4}
First of all, to study the properties of the value function via the Hamiltonian system \eqref{2:hst}, we provide a connection between the value function $V$ and the Hamiltonian system \eqref{2:hst}.
Based on the analysis in Section \ref{sec3}, the twice continuous differentiability of $V$ and the representations for $\mathrm D_x V$ and $\mathrm D^2_{xx}V$ are obtained.
For convenience, we denote $\nabla X^{t,x}:=(\nabla_1X^{t,x},\nabla_2X^{t,x},...,\nabla_nX^{t,x})$, similarly for $\nabla Y^{t,x},\nabla Z^{t,x},\nabla u^{t,x}$ (recall the equation \eqref{3:lhsg}). 

\begin{proposition}\label{prop:vfd}
Under Assumptions \ref{A3} and \ref{A4}, the partial derivatives $\mathrm D_xV$ and $\mathrm D^2_{xx}V$ exist and are jointly continuous in $(t,x)$. Moreover, the derivatives admits the following representations:
\begin{equation}
\mathrm D_xV(t,x)=\bar Y^{t,x}_t,~~~\mathrm D^2_{xx}V(t,x)=\nabla Y^{t,x}_t.
\end{equation}
Furthermore, $\mathrm D^2_{xx}V$ is uniformly bounded.
\end{proposition}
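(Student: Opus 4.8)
The plan is to establish the two representations in turn, differentiating $V(t,x)=J(t,x;\bar u^{t,x})$ first once and then twice in $x$, and then to extract continuity and boundedness from the a priori estimates of Section~\ref{sec3}.

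\textbf{First derivative.} First I would exploit the differentiability of the optimal pair $(\bar X^{t,x},\bar u^{t,x})$ in $x$. By Proposition~\ref{prop:cvg}, the difference quotients $(\Delta_i^hX^{t,x},\Delta_i^hu^{t,x})$ converge in the $L^2$-sense to $(\nabla_iX^{t,x},\nabla_iu^{t,x})$ as $h\to0$ (this is \eqref{prop:cvg:1}). Writing $V(t,x)=\mathbb E[g(\bar X^{t,x}_T)+\int_t^Tl(s,\bar X^{t,x}_s,\bar u^{t,x}_s)\,\mathrm ds]$, I would form $h^{-1}[V(t,x+he_i)-V(t,x)]$, expand each increment of $g$ and $l$ by the mean value theorem, and pass to the limit under the expectation. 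The interchange of limit and expectation is justified by Lebesgue's dominated convergence theorem, using the uniform bound \eqref{3:lhsg:est} together with the boundedness and continuity of the second-order derivatives of $g,l$ from Assumption~\ref{A3}. This shows that $\mathrm D_{x_i}V(t,x)$ exists and equals $\mathbb E[\langle \mathrm D_xg(\bar X^{t,x}_T),\nabla_iX^{t,x}_T\rangle+\int_t^T(\langle\mathrm D_xl,\nabla_iX^{t,x}_s\rangle+\langle\mathrm D_ul,\nabla_iu^{t,x}_s\rangle)\,\mathrm ds]$, where $\mathrm D_xl,\mathrm D_ul$ are evaluated along $(\bar X^{t,x},\bar u^{t,x})$.

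\textbf{Identifying the limit via It\^o's formula (the crux).} The key step is to collapse this expression to $\bar Y^{t,x}_t$. I would apply It\^o's formula to $\langle \nabla_iX^{t,x}_s,\bar Y^{t,x}_s\rangle$ on $[t,T]$, using the dynamics of $\nabla_iX^{t,x}$ from \eqref{3:lhsg} and of $\bar Y^{t,x}$ from \eqref{2:hst}. The drift terms carrying $A$ cancel against one another, and those carrying $C$ cancel against the quadratic covariation, leaving the drift $\langle\nabla_iu^{t,x}_s,B(s)^\top\bar Y^{t,x}_s+D(s)^\top\bar Z^{t,x}_s\rangle-\langle\nabla_iX^{t,x}_s,\mathrm D_xl\rangle$. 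By the stationarity condition $B^\top\bar Y^{t,x}+D^\top\bar Z^{t,x}=-\mathrm D_ul$ in \eqref{2:hst}, this drift reduces to exactly $-\langle\nabla_iX^{t,x}_s,\mathrm D_xl\rangle-\langle\nabla_iu^{t,x}_s,\mathrm D_ul\rangle$. Taking expectations kills the martingale part; then using $\nabla_iX^{t,x}_t=e_i$, $\bar Y^{t,x}_T=\mathrm D_xg(\bar X^{t,x}_T)$, and the fact that $\bar Y^{t,x}_t$ is deterministic (Remark~\ref{rmk}), the integrated identity becomes precisely $(\bar Y^{t,x}_t)_i=\mathrm D_{x_i}V(t,x)$. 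Hence $\mathrm D_xV(t,x)=\bar Y^{t,x}_t$.

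\textbf{Second derivative, continuity, and boundedness.} Since $\mathrm D_xV(t,x)=\bar Y^{t,x}_t$, I would differentiate once more by forming $\Delta_j^hY^{t,x}_t=h^{-1}[\bar Y^{t,x+he_j}_t-\bar Y^{t,x}_t]$. Evaluating \eqref{prop:cvg:1} at the single slice $s=t$ and noting that both $\Delta_j^hY^{t,x}_t$ and $\nabla_jY^{t,x}_t$ are deterministic (the linearized system \eqref{3:lhsg} is adapted to $\mathbb F^t$, so as in Remark~\ref{rmk} its value at $s=t$ is deterministic), the $L^2$-convergence becomes genuine convergence of deterministic vectors, giving $\mathrm D^2_{xx}V(t,x)=\nabla Y^{t,x}_t$. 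Continuity of $\mathrm D_xV$ in $(t,x)$ follows from the continuous dependence \eqref{prop:est:2} of $\bar Y^{t,x}$ evaluated at $s=t$, and continuity of $\mathrm D^2_{xx}V$ follows in the same way from \eqref{prop:cvg:2}, in both cases because the quantity at $s=t$ is deterministic so the $L^2(\Omega)$ bound controls the actual value. Finally, \eqref{3:lhsg:est} yields $\vert\nabla_iY^{t,x}_t\vert^2\le\mathbb E[\sup_{t\le s\le T}\vert\nabla_iY^{t,x}_s\vert^2]\le K$ with $K$ independent of $(t,x)$, so $\mathrm D^2_{xx}V$ is uniformly bounded.

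\textbf{Main obstacle.} I expect the It\^o identification of the second paragraph to be the heart of the argument: differentiating the \emph{optimal} cost produces a term involving the derivative $\nabla_iu^{t,x}$ of the optimal control, and it is only the stationarity condition of the Hamiltonian system that absorbs this term and reduces the whole expression to the adjoint value $\bar Y^{t,x}_t$. A secondary point deserving care is that all the convergences and estimates imported from Section~\ref{sec3} are stated over $\sup_s$ on $\Omega$, whereas the representations require information at the single instant $s=t$; this transfer is legitimate precisely because $\bar Y^{t,x}_t$ and $\nabla_jY^{t,x}_t$ are deterministic, so $L^2(\Omega)$ control at $s=t$ is control of the values themselves.
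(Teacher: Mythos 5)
Your proposal is correct and follows essentially the same route as the paper's proof: difference quotients identified in the limit via Proposition \ref{prop:cvg}, It\^o's formula applied to $\langle \bar Y^{t,x}_s,\nabla_iX^{t,x}_s\rangle$ with the stationarity condition absorbing the $\nabla_iu^{t,x}$ term, then $\mathrm D^2_{xx}V(t,x)=\nabla Y^{t,x}_t$ by differentiating $\bar Y^{t,x}_t$ once more, with continuity from \eqref{prop:est:2} and \eqref{prop:cvg:2} and boundedness from \eqref{3:lhsg:est}. Your explicit observation that the determinism of $\bar Y^{t,x}_t$ and $\nabla_iY^{t,x}_t$ (Remark \ref{rmk}) is what converts the $L^2(\Omega)$ estimates at the slice $s=t$ into genuine convergence is a correct articulation of a point the paper leaves implicit.
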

\begin{proof}
Firstly, for any $h\in\mathbb R/\{0\}$, we have
\begin{equation}
\begin{aligned}
&\frac 1h[V(t,x+he_i)-V(t,x)]\\
=&\frac 1h[J(t,x+he_i;\bar u^{t,x+he_i})-J(t,x;\bar u^{t,x})]\\
=&\mathbb E\int_0^1\langle\mathrm D_xg(\bar X^{t,x}_T+\lambda h\Delta_i^hX^{t,x}_T),\Delta_i^hX^{t,x}_T\rangle\, \mathrm d\lambda\\
&+\mathbb E\int_t^T\int_0^1\big[\langle\mathrm D_xl(s,\bar X^{t,x}_s+\lambda h\Delta_i^hX^{t,x}_s,\bar u^{t,x}_s+\lambda h\Delta_i^hu^{t,x}_s), \Delta_i^hX^{t,x}_s\rangle\\
&+\langle\mathrm D_ul(s,\bar X^{t,x}_s+\lambda h\Delta_i^hX^{t,x}_s,\bar u^{t,x}_s+\lambda h\Delta_i^hu^{t,x}_s)\mathrm, \Delta_i^hu^{t,x}_s\rangle\big] d\lambda\, \mathrm ds.
\end{aligned}
\end{equation}
By the Lipschitz continuity of $\mathrm D_xg$, H\"older's inequality and Proposition \ref{prop:est}, we have
\begin{align*}
&\bigg\vert\mathbb E\int_0^1\langle\mathrm D_xg(\bar X^{t,x}_T+\lambda h\Delta_i^hX^{t,x}_T), \Delta_i^hX^{t,x}_T\rangle\, \mathrm d\lambda-\mathbb E\big[\langle\mathrm D_xg(\bar X^{t,x}_T),\nabla_iX^{t,x}_T\rangle\big]\bigg\vert\\
\leq&\mathbb E\bigg[\int_0^1\vert \mathrm D_xg(\bar X^{t,x}_T+\lambda h\Delta_i^hX^{t,x}_T)-\mathrm D_xg(\bar X^{t,x}_T)\vert\, \mathrm d\lambda\vert \Delta_i^hX^{t,x}_T\vert\bigg]\\
&+\mathbb E\big[\vert\mathrm D_xg(\bar X^{t,x}_T)\vert\vert\Delta_i^hX^{t,x}_T-\nabla_iX^{t,x}_T\vert\big]\\
\leq&Kh\mathbb E\big[\vert \Delta_i^hX^{t,x}_T\vert^2\big]+\left(\mathbb E\big[\vert\mathrm D_xg(\bar X^{t,x}_T)\vert^2\big]\mathbb E\big[\vert\Delta_i^hX^{t,x}_T-\nabla_iX^{t,x}_T\vert^2\big]\right)^{\frac12}\\
\leq&Kh+K(1+\vert x\vert^2)^{\frac{1}{2}}\left(\mathbb E\big[\vert\Delta_i^hX^{t,x}_T-\nabla_iX^{t,x}_T\vert^2\big]\right)^{\frac12}.
\end{align*}
Similarly, we deduce that
\begin{align*}
&\bigg\vert\mathbb E\int_t^T\int_0^1\big[\langle\mathrm D_xl(s,\bar X^{t,x}_s+\lambda h\Delta_i^hX^{t,x}_s,\bar u^{t,x}_s+\lambda h\Delta_i^hu^{t,x}_s), \Delta_i^hX^{t,x}_s\rangle\\
&+\langle\mathrm D_ul(s,\bar X^{t,x}_s+\lambda h\Delta_i^hX^{t,x}_s,\bar u^{t,x}_s+\lambda h\Delta_i^hu^{t,x}_s), \Delta_i^hu^{t,x}_s\rangle\big]\, \mathrm d\lambda\, \mathrm ds\\
&-\mathbb E\int_t^T\big[\langle\mathrm D_xl(s,\bar X_s^{t,x},\bar u^{t,x}_s),\nabla_iX_s^{t,x}\rangle+\langle\mathrm D_ul(s,\bar X_s^{t,x},\bar u^{t,x}_s),\nabla_iu_s^{t,x}\rangle\big]\, \mathrm ds\bigg\vert\\
\leq&Kh+K(1+\vert x\vert^2)^{\frac{1}{2}}\left(\mathbb E\bigg[\sup_{t \leq s \leq T}\vert\Delta_i^hX^{t,x}_s-\nabla_iX^{t,x}_s\vert^2+\int_t^T\vert\Delta_i^hu^{t,x}_s-\nabla_iu^{t,x}_s\vert^2\, \mathrm ds\bigg]\right)^{\frac12}.
\end{align*}
Letting $h\to0$ and by \eqref{prop:cvg:1}, we derive
\begin{equation}
\begin{aligned}
\frac{\partial V}{\partial x_i}(t,x)=&\lim_{h\to0}\frac 1h[V(t,x+he_i)-V(t,x)]\\
=&\mathbb E\bigg[\langle\mathrm D_xg(\bar X^{t,x}_T),\nabla_iX^{t,x}_T\rangle+\int_t^T[\langle\mathrm D_xl(s,\bar X^{t,x}_s,\bar u^{t,x}_s),\nabla_iX^{t,x}_s\rangle\\
&+\langle\mathrm D_ul(s,\bar X^{t,x}_s,\bar u^{t,x}_s), \nabla_iu^{t,x}_s\rangle]\, \mathrm ds\bigg].
\end{aligned}
\end{equation}
Moreover, applying It\^o's formula to $\langle \bar Y^{t,x}_s,\nabla_iX^{t,x}_s\rangle$ on $[t,T]$ yields
\begin{equation}
\begin{aligned}
&\mathbb E\bigg[\langle\mathrm D_xg(\bar X^{t,x}_T),\nabla_iX^{t,x}_T\rangle\bigg]-\langle \bar Y^{t,x}_t,e_i\rangle\\
=&\mathbb E\bigg[\int_t^T[-\langle\mathrm D_xl(s,\bar X^{t,x}_s,\bar u^{t,x}_s),\nabla_iX^{t,x}_s\rangle+\langle B(s)^\top \bar Y^{t,x}_s+D(s)^\top \bar Z^{t,x}_s ,\nabla_iu^{t,x}_s\rangle]\, \mathrm ds\bigg].
\end{aligned}
\end{equation}
By the stationary condition $\mathcal D[\bar u^{t,x}]=0$ and comparing to $(\partial V)/(\partial x_i)$, we obtain
\begin{equation}
\frac{\partial V}{\partial x_i}(t,x)=\langle \bar Y^{t,x}_t,e_i\rangle,
\end{equation}
which leads to the representation for $\mathrm{D}_xV$. 
Finally, by \eqref{prop:cvg:1}, we have
$$
\lim_{h\to0}\frac 1h[\bar Y^{t,x+he_i}_t-\bar Y^{t,x}_t]=\lim_{h\to0}\Delta_i^h Y^{t,x}_t=\nabla_iY^{t,x}_t.
$$
Hence, $V$ is twice differentiable with respect to $x$. 
The joint continuity of $\mathrm D_xV$ and $\mathrm D^2_{xx}V$ with respect to $(t,x)$ follows from \eqref{prop:est:2} and \eqref{prop:cvg:2}, respectively.
By \eqref{3:lhsg:est}, $\mathrm D^2_{xx}V$ is bounded.
\end{proof}
As a corollary, we are able to represent $\bar Y^{t,x}$ by $\bar X^{t,x}$ and $\mathrm D_x V$ on the whole interval $[t,T]$.
\begin{corollary}\label{cor:dcf}
Under Assumptions \ref{A3} and \ref{A4}, for any initial pair $(t,\xi)\in[0,T]\times L^2_{\mathcal F_t}(\Omega;\mathbb R^n)$, we have
\begin{equation}\label{cor:dcf:1}
\bar Y^{t,\xi}_t(\omega)=\mathrm D_xV(t,\xi(\omega)),~~\mbox{for almost all }\omega\in\Omega.
\end{equation}
Moreover, for any initial pair $(t,x)\in[0,T]\times \mathbb R^n$, we have
\begin{equation}\label{cor:dcf:2}
\mathbb P\Big[\bar Y^{t,x}_s(\omega)=\mathrm D_xV(s,\bar X^{t,x}_s(\omega)), \mbox{ for all } s\in[t,T]\Big]=1.
\end{equation}
\end{corollary}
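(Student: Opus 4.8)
The plan is to establish \eqref{cor:dcf:1} first and then derive \eqref{cor:dcf:2} from it through a time-consistency (flow) property of the Hamiltonian system \eqref{2:hst}. For deterministic initial points \eqref{cor:dcf:1} is exactly the representation $\mathrm{D}_xV(t,x)=\bar Y^{t,x}_t$ of Proposition \ref{prop:vfd}, so the real task in the first identity is to upgrade from constant to $\mathcal F_t$-measurable initial data.

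For \eqref{cor:dcf:1} I would first treat a simple datum $\xi=\sum_{i=1}^N x_i\mathbbm 1_{A_i}$, where $\{A_i\}_{i=1}^N\subset\mathcal F_t$ is a partition of $\Omega$ and $x_i\in\mathbb R^n$. By Remark \ref{rmk} each quadruple $(\bar X^{t,x_i},\bar Y^{t,x_i},\bar Z^{t,x_i},\bar u^{t,x_i})$ is adapted to $\mathbb F^t$, hence independent of $\mathcal F_t$, so the pasted processes $(\bar X,\bar Y,\bar Z,\bar u):=\sum_i\mathbbm 1_{A_i}(\bar X^{t,x_i},\bar Y^{t,x_i},\bar Z^{t,x_i},\bar u^{t,x_i})$ are $\mathbb F$-adapted on $[t,T]$, since the $\mathcal F_t$-measurable factors $\mathbbm 1_{A_i}$ may be carried inside the It\^o integrals over $[t,T]$. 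Using $\sum_i\mathbbm 1_{A_i}=1$ and the fact that the partition localizes the nonlinearities, e.g. $\mathrm D_xl(s,\bar X_s,\bar u_s)=\sum_i\mathbbm 1_{A_i}\mathrm D_xl(s,\bar X^{t,x_i}_s,\bar u^{t,x_i}_s)$, $\mathrm D_xg(\bar X_T)=\sum_i\mathbbm 1_{A_i}\mathrm D_xg(\bar X^{t,x_i}_T)$, and similarly the stationarity relation, one checks directly that $(\bar X,\bar Y,\bar Z,\bar u)$ solves \eqref{2:hst} with $\bar X_t=\xi$. By the uniqueness in Theorem \ref{th:wp} in its version for random initial data, these pasted processes coincide with $(\bar X^{t,\xi},\bar Y^{t,\xi},\bar Z^{t,\xi},\bar u^{t,\xi})$, so evaluating at time $t$ and invoking Proposition \ref{prop:vfd} gives $\bar Y^{t,\xi}_t=\sum_i\mathbbm 1_{A_i}\bar Y^{t,x_i}_t=\sum_i\mathbbm 1_{A_i}\mathrm D_xV(t,x_i)=\mathrm D_xV(t,\xi)$. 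For a general $\xi\in L^2_{\mathcal F_t}(\Omega;\mathbb R^n)$ I would approximate it in $L^2$ by simple $\xi_n$: the stability estimate \eqref{2:est} applied to the difference of two Hamiltonian systems with the same initial time (as in the proof of Proposition \ref{prop:est} with $t=t'$, so the time-increment terms vanish) yields $\bar Y^{t,\xi_n}_t\to\bar Y^{t,\xi}_t$ in $L^2$, while the uniform bound on $\mathrm D^2_{xx}V$ from Proposition \ref{prop:vfd} makes $\mathrm D_xV(t,\cdot)$ globally Lipschitz, so $\mathrm D_xV(t,\xi_n)\to\mathrm D_xV(t,\xi)$ in $L^2$; passing to the limit in the identity for simple data gives \eqref{cor:dcf:1}.

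For \eqref{cor:dcf:2} the key is a flow property: for each fixed $s\in[t,T]$ the restriction $(\bar X^{t,x}_r,\bar Y^{t,x}_r,\bar Z^{t,x}_r,\bar u^{t,x}_r)_{r\in[s,T]}$ solves \eqref{2:hst} on $[s,T]$ with the $\mathcal F_s$-measurable initial datum $\bar X^{t,x}_s$. Indeed, restricting the forward integral equation, the backward integral equation and the pointwise stationarity relation to $[s,T]$ gives precisely the Hamiltonian system on $[s,T]$ with initial value $\bar X^{t,x}_s$ and terminal value $\mathrm D_xg(\bar X^{t,x}_T)$; here $\bar X^{t,x}_s\in L^2_{\mathcal F_s}(\Omega;\mathbb R^n)$ by \eqref{prop:est:1}, and the associated cost functional is uniformly convex by Assumption \ref{A4}. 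Uniqueness then identifies this restriction with $(\bar X^{s,\bar X^{t,x}_s},\bar Y^{s,\bar X^{t,x}_s},\bar Z^{s,\bar X^{t,x}_s},\bar u^{s,\bar X^{t,x}_s})$; in particular $\bar Y^{t,x}_s=\bar Y^{s,\bar X^{t,x}_s}_s$, which by \eqref{cor:dcf:1} applied at time $s$ with random datum $\bar X^{t,x}_s$ equals $\mathrm D_xV(s,\bar X^{t,x}_s)$ almost surely. This yields the identity for each fixed $s$; intersecting over rational $s\in[t,T]$ and using the path-continuity of $\bar Y^{t,x}$ and $\bar X^{t,x}$ together with the joint continuity of $\mathrm D_xV$ upgrades it to hold simultaneously for all $s\in[t,T]$ on a set of full probability.

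The main obstacle is the flow property and the legitimacy of applying uniqueness at the random datum $\bar X^{t,x}_s$: one must verify that the tail of the solution is genuinely $\mathbb F$-adapted and solves the Hamiltonian system on $[s,T]$ with that random initial value, and that Theorem \ref{th:wp} applies to it. It is precisely the pasting construction of the simple case that makes \eqref{cor:dcf:1} available for $\mathcal F_s$-measurable initial data, and this is what closes the loop between the two identities.
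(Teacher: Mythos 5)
Your proposal is correct and follows essentially the same route as the paper: the paper's (very terse) proof obtains \eqref{cor:dcf:1} ``by the approximation'' in the style of Zhang \cite[Theorem 5.2]{z17} --- precisely your pasting of solutions over an $\mathcal F_t$-partition via Remark \ref{rmk} followed by an $L^2$ limit using the stability estimate \eqref{2:est} and the Lipschitz continuity of $\mathrm D_xV(t,\cdot)$ --- and then derives \eqref{cor:dcf:2} from the flow identity $\bar Y^{t,x}_s=\bar Y^{s,\bar X^{t,x}_s}_s$ together with the continuity of $\bar X^{t,x}$, $\bar Y^{t,x}$ and $\mathrm D_xV$, exactly as you do. Your write-up merely supplies the details (adaptedness of the pasted quadruple, uniqueness at random initial data, and the rational-times argument) that the paper leaves implicit.
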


\begin{proof}
Similarly to the proof of Zhang \cite[Theorem 5.2]{z17}, we first obtain \eqref{cor:dcf:1} by the approximation.
Noting that $\bar Y^{t,x}_s=\bar Y^{s,\bar X_s^{t,x}}_s$, the second equality \eqref{cor:dcf:2} comes from the continuity of $\bar X^{t,x},\bar Y^{t,x}$ and $\mathrm D_xV$.
\end{proof}

\begin{remark}
In the study of deterministic LC optimal control problems, You \cite{yy87, yy97} introduced a class of quasi-Riccati equations, which are first-order PDEs, to characterize $\mathrm D_xV$. However, in the stochastic case considered in this paper, a straightforward formal derivation shows that the corresponding quasi-Riccati equation is a second-order PDE. In other words, it involves the third-order derivatives of $V$. From the viewpoint of \eqref{cor:dcf:2} (or \eqref{1:cnt}), the second-order derivatives of the solutions to the Hamiltonian systems must be taken into account. Consequently, as we mentioned in the Introduction (see the last paragraph on Page \pageref{open}), the unsolved $L^p$ ($p\geq 4$) estimates for solutions of the associated coupled FBSDEs cannot be circumvented. Therefore, the corresponding quasi-Riccati equation is not introduced or studied in this paper.
\end{remark}

In light of the classical LQ theory, it is natural to decouple the linear Hamiltonian system \eqref{3:lhsg} using an SRE.
We introduce the following family of SREs with the parameter $(t,x)\in[0,T]\times \mathbb R^n$:
\begin{equation}\label{4:sre}
\left\{
\begin{aligned}
&\mathrm d\mathcal P^{t,x}_s = -f^{t,x}(s,\mathcal P^{t,x}_s,\Lambda^{t,x}_s)\mathrm{d}s+\sum_{i=1}^{d}\Lambda^{t,x,i}_s\, \mathrm d W_s^i,~~~s\in[t,T],\\
&\mathcal P^{t,x}_T=\mathcal G^{t,x}.
\end{aligned}
\right.
\end{equation}
where the generator $f^{t,x}$ is defined as
\begin{equation*}
\begin{aligned}
&f^{t,x}(s,P,\Lambda):=PA(s)+A(s)^\top P +\sum_{i=1}^{d}\big(C_i(s)^\top P C_i(s)+\Lambda^{i}C_i(s)+C_i(s)^\top \Lambda^{i}\big)+ \mathcal Q^{t,x}_s\\
&-\Big[B(s)^\top P +\sum_{i=1}^{d}\big(D_i(s)^\top P C_i(s)+D_i(s)^\top\Lambda^{i}\big)+\mathcal S^{t,x}_s\Big]^\top\Big[\mathcal R^{t,x}_s+\sum_{i=1}^{d}D(s)^\top PD(s)\Big]^{-1}\\
&\times\Big[B(s)^\top P +\sum_{i=1}^{d}\big(D_i(s)^\top P C_i(s)+D_i(s)^\top\Lambda^{i}\big)+\mathcal S^{t,x}_s\Big],
\end{aligned}
\end{equation*}
and we denote $\Lambda^{t,x}:=(\Lambda^{t,x,1},...,\Lambda^{t,x,d})$ and $\Lambda:=(\Lambda^1,...,\Lambda^d)$.
When $\mathcal{S}^{t,x}\equiv 0$, Tang \cite{t03,t15} proved the well-posedness for the SRE \eqref{4:sre} in the case of $\mathcal{R}_s^{t,x}\geq\delta I_m$.
When $\mathcal{S}^{t,x}\neq 0$, by transforming the coefficients suitably, the SRE \eqref{4:sre} can be rewritten as the same form as the case of $\mathcal{S}^{t,x}\equiv 0$ 
(One could refer to Huang and Yu \cite[Lemma 3.4]{hy14} for example).
Sun et al. \cite{sxy21} proved the well-posedness of the SRE \eqref{4:sre} under the uniform convexity condition \eqref{3:lquc}.
By Sun et al. \cite[Theorem 6.1]{sxy21}, under the uniform convexity condition \eqref{3:lquc}, the matrix valued process $\nabla X^{t,x}_s$ a.s. has an inverse at each $s\in[t,T]$ and the following representation holds:
\begin{equation}\label{4:sre1}
\mathcal P^{t,x}_s=\nabla Y^{t,x}_s(\nabla X^{t,x}_s)^{-1}.
\end{equation}
By Corollary \ref{cor:dcf} and the chain rule, we have
\begin{equation}\label{4:sre2}
\nabla Y^{t,x}_s=\mathrm D^2_{xx}V(s,\bar X^{t,x}_s)\nabla X^{t,x}_s.
\end{equation}
Combining \eqref{4:sre1} and \eqref{4:sre2} leads to
$$
\mathcal P^{t,x}_s=\mathrm D^2_{xx}V(s,\bar X^{t,x}_s).
$$
Specially, letting $s=t$, we have
$$
\mathcal P^{t,x}_t=\mathrm D^2_{xx}V(t,x),
$$
which provides another characterization for $\mathrm D^2_{xx} V$ using SREs. 

Moreover, by Sun et al. \cite[Lemma 6.6]{sxy21}, the following regular condition holds:
\begin{equation}\label{4:lqpd}
  \mathcal R^{t,x}_s+\sum_{i=1}^{d}D(s)^\top\mathcal P^{t,x}_sD(s)=\mathrm D^2_{uu}l(s,\bar X^{t,x}_s,\bar u^{t,x}_s)+\sum_{i=1}^{d}D(s)^\top\mathcal P^{t,x}_sD(s)\geq\delta I_m,~~~~\mbox{a.s. a.e.}.
\end{equation}
The above uniformly positive definiteness is crucial when considering the LQ problem under the uniform convexity condition.
When it comes to the LC problem, we also need a similar regular condition (see \eqref{1:pov}).
However, due to the lack of continuity of $\bar u^{t,x}$, the positive definiteness \eqref{4:lqpd} may fail when $s=t$.
To overcome this difficulty, we introduce a perturbation for $\bar u^{t,x}$ and provide the following proposition.
\begin{proposition}\label{prop:vfpd}
  Under Assumptions \ref{A3} and \ref{A4}, the value function $V$ satisfies the regular condition \eqref{1:pov}.
\end{proposition}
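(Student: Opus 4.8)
The plan is to reduce the pointwise bound \eqref{1:pov} to the positivity \eqref{4:lqpd}, which we already control along optimal trajectories for $s>t$. Fix an arbitrary $(t_0,x_0,u_0)\in[0,T)\times\mathbb R^n\times\mathbb R^m$, the values at $t_0=T$ following afterwards by continuity of both sides. Recall that \eqref{4:lqpd} asserts, for the optimal trajectory of any initial pair, $\mathrm D^2_{uu}l(s,\bar X_s,\bar u_s)+\sum_i D_i(s)^\top\mathrm D^2_{xx}V(s,\bar X_s)D_i(s)\ge\delta I_m$ for a.e.\ $s$ in the open interval. Two obstructions prevent reading off \eqref{1:pov} directly: first, the value $\bar u_s$ is tied to the state through the (still unestablished) feedback map, so an arbitrary $u_0$ is not attained along optimal trajectories; second, \eqref{4:lqpd} degenerates at the left endpoint $s=t$, since $\bar u^{t,x}$ (equivalently $\bar Z^{t,x}$) need not be continuous there. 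I would handle both by perturbing the control problem so that $u_0$ becomes the optimal control on a short initial interval.

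Concretely, fix $\epsilon>0$ and replace $l$ by $l^\epsilon(s,x,u):=l(s,x,u)+\mathbbm 1_{[t_0,t_0+\epsilon)}(s)\langle\gamma_\epsilon(s),u\rangle$ for a bounded vector field $\gamma_\epsilon$ to be chosen. The added term is affine in $(x,u)$, so $\mathrm D^2_{uu}l^\epsilon=\mathrm D^2_{uu}l$, and for every initial pair $J^\epsilon(t,\xi;\cdot)$ differs from $J(t,\xi;\cdot)$ by a bounded linear functional; hence $J^\epsilon$ is uniformly convex with the \emph{same} constant $\delta$, and Assumptions \ref{A3}--\ref{A4} persist for the perturbed data. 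Consequently Theorem \ref{th:wp}, the regularity of Section \ref{sec3}, Corollary \ref{cor:dcf} and the companion bound \eqref{4:lqpd} all hold for the perturbed problem, whose optimal quadruple and value function I denote by $(\tilde X^\epsilon,\tilde Y^\epsilon,\tilde Z^\epsilon,\tilde u^\epsilon)$ and $\tilde V^\epsilon$. I would then choose $\gamma_\epsilon$ so that $\tilde u^\epsilon\equiv u_0$ on $[t_0,t_0+\epsilon)$: the perturbed stationarity relation $\mathrm D_ul(s,\tilde X^\epsilon_s,\tilde u^\epsilon_s)+\gamma_\epsilon(s)+B(s)^\top\tilde Y^\epsilon_s+D(s)^\top\tilde Z^\epsilon_s=0$ determines $\gamma_\epsilon$ implicitly through $(\tilde Y^\epsilon,\tilde Z^\epsilon)$, and because the perturbation acts only on an interval of length $\epsilon$ with bounded integrand, its effect on $(\tilde Y^\epsilon,\tilde Z^\epsilon)$ near $t_0$ is $O(\epsilon)$ by the estimates behind Proposition \ref{prop:est}; the associated map is a contraction for small $\epsilon$ and yields a bounded $\gamma_\epsilon$ with the desired property.

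With this choice, \eqref{4:lqpd} for the perturbed problem, evaluated at interior times $s\in(t_0,t_0+\epsilon)$ where $\tilde u^\epsilon_s=u_0$, gives $\mathrm D^2_{uu}l(s,\tilde X^\epsilon_s,u_0)+\sum_i D_i(s)^\top\mathrm D^2_{xx}\tilde V^\epsilon(s,\tilde X^\epsilon_s)D_i(s)\ge\delta I_m$ almost surely. I would then let $\epsilon\downarrow0$ (and $s\downarrow t_0$). Since $\gamma_\epsilon$ stays bounded and is supported on a vanishing interval, the perturbed data converge to the original data in the sense required by Propositions \ref{prop:est} and \ref{prop:cvg} and by the SRE stability of Sun et al.\ \cite{sxy21}; hence $\tilde X^\epsilon_s\to x_0$ and $\mathrm D^2_{xx}\tilde V^\epsilon(s,\tilde X^\epsilon_s)\to\mathrm D^2_{xx}V(t_0,x_0)$. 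Using the joint continuity of $\mathrm D^2_{uu}l$ (Assumption \ref{A3}) and of $\mathrm D^2_{xx}V$ (Proposition \ref{prop:vfd}), the inequality passes to the limit as $\mathrm D^2_{uu}l(t_0,x_0,u_0)+\sum_i D_i(t_0)^\top\mathrm D^2_{xx}V(t_0,x_0)D_i(t_0)\ge\delta I_m$, which is \eqref{1:pov} at the arbitrary point $(t_0,x_0,u_0)$.

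The step I expect to be the genuine obstacle is the two-fold stability claim used above: that the affine, localized perturbation (i) forces the perturbed optimal control to the prescribed value $u_0$ via a well-posed fixed point, and (ii) perturbs the Hessian $\mathrm D^2_{xx}\tilde V^\epsilon(t_0,x_0)$ by only $o(1)$ as $\epsilon\to0$. Both rely on quantitative continuity of the solution of the Hamiltonian system and of the associated SRE with respect to the coefficients \emph{near the initial time}, which is exactly the regime where $\bar Z^{t,x}$ is discontinuous; the estimates of Proposition \ref{prop:est}, Proposition \ref{prop:cvg} and \cite{sxy21} must therefore be combined carefully to control the coupled double limit in $\epsilon$ and $s$.
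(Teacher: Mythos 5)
Your overall shape --- localize a perturbation on $[t_0,t_0+\epsilon)$, invoke the uniform positivity \eqref{4:lqpd}, reach the left endpoint by continuity, then let $\epsilon\downarrow 0$ and identify the limiting Hessian --- matches the paper's proof. But the step you yourself flag as the obstacle is a genuine gap, and it cannot be closed as written. To force $\tilde u^\epsilon\equiv u_0$ on $[t_0,t_0+\epsilon)$, the stationarity relation $\mathrm D_ul(s,\tilde X^\epsilon_s,u_0)+\gamma_\epsilon(s)+B(s)^\top\tilde Y^\epsilon_s+D(s)^\top\tilde Z^\epsilon_s=0$ must hold for a.e.\ $(s,\omega)$. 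For $s>t_0$ the processes $\tilde X^\epsilon,\tilde Y^\epsilon,\tilde Z^\epsilon$ are genuinely random (the Brownian motion acts immediately), so no \emph{deterministic} $\gamma_\epsilon$ can satisfy this identity unless $B^\top\tilde Y^\epsilon+D^\top\tilde Z^\epsilon+\mathrm D_ul(\cdot,\tilde X^\epsilon,u_0)$ happens to be deterministic, which fails generically. If you instead allow $\gamma_\epsilon$ to be an adapted random field, the perturbed cost violates Assumption \ref{A3} (all coefficients deterministic and jointly continuous --- note also that the indicator $s\mapsto\mathbbm 1_{[t_0,t_0+\epsilon)}(s)$ already breaks joint continuity in $t$), the perturbed problem ceases to be Markovian, and the objects your argument needs --- the perturbed value function $\tilde V^\epsilon$, its Hessian $\mathrm D^2_{xx}\tilde V^\epsilon$, and the identification of the SRE solution with that Hessian --- are simply unavailable: all of Sections \ref{sec3}--\ref{sec4} require \ref{A3}. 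So the fixed point cannot be solved in the category in which the rest of your proof lives.

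The paper's proof sidesteps exactly this trap: it never makes $u_0$ optimal and never touches the cost. It uses the needle-perturbed \emph{control} $u^{\epsilon}_s=u$ on $[t,t+\epsilon]$, $=\bar u^{t,x}_s$ afterwards --- an arbitrary admissible control --- and exploits Lemma \ref{lem:2oc}: uniform convexity of $J$ forces the LQ-type second-order condition \eqref{lem:2oc:1} for the Hessian coefficients $(\mathcal Q^{\epsilon},\mathcal S^{\epsilon},\mathcal R^{\epsilon},\mathcal G^{\epsilon})$ evaluated along \emph{any} admissible pair, not only the optimal one. Hence the matrix-valued Hamiltonian system with these coefficients is well posed by Theorem \ref{th:wp}, $\mathcal P^{\epsilon}_s=\mathbb Y^{\epsilon}_s(\mathbb X^{\epsilon}_s)^{-1}$ solves the associated SRE, and Sun et al.\ give $\mathrm D^2_{uu}l(s,X^{t,x,u^{\epsilon}}_s,u^{\epsilon}_s)+\sum_{i=1}^d D_i(s)^\top\mathcal P^{\epsilon}_sD_i(s)\geq\delta I_m$ a.e.\ a.s.; since $u^{\epsilon}_s\equiv u$ near $t$ and $\mathcal P^{\epsilon}$, $X^{t,x,u^{\epsilon}}$ have continuous paths, one may set $s=t$ directly. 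The limit $\epsilon\to0$ is then handled as in Proposition \ref{prop:cvg}: $(\mathbb X^{\epsilon},\mathbb Y^{\epsilon},\mathbb Z^{\epsilon},\mathbb U^{\epsilon})$ converges to $(\nabla X^{t,x},\nabla Y^{t,x},\nabla Z^{t,x},\nabla u^{t,x})$, so $\mathcal P^{\epsilon}_t=\mathbb Y^{\epsilon}_t\to\nabla Y^{t,x}_t=\mathrm D^2_{xx}V(t,x)$, yielding \eqref{1:pov}. No fixed point, no perturbed value function, and no coefficient-stability theory for Hessians of value functions is required. The missing idea in your proposal is precisely this: replace ``make $u_0$ optimal via a cost perturbation'' by ``evaluate the second-order data along the spike control and apply Lemma \ref{lem:2oc},'' which converts your double limit in $(\epsilon,s)$ into two clean, already-established convergences.
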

\begin{proof}
Let $(t,x,u)\in[0,T)\times\mathbb R^{n+m}$ be given.
  For any $\epsilon\in(0,T-t]$, set
\begin{equation}\label{prop:vfpd:pf1}
  u^{\epsilon}_s=\left\{
\begin{aligned}
&u,~~~s\in[t,t+\epsilon],\\
&\bar u^{t,x}_s,~~~s\in(t+\epsilon,T].
\end{aligned}
\right.
\end{equation}
Recall the notations \eqref{3:not1}.
We further denote
$$
\begin{pmatrix}
\mathcal Q^\epsilon & (\mathcal S^\epsilon)^\top \\
\mathcal S^\epsilon & \mathcal R^\epsilon
\end{pmatrix}:=\begin{pmatrix}
\mathcal Q^{t,x,u^{\epsilon}}& (\mathcal S^{t,x,u^{\epsilon}})^\top\\
\mathcal S^{t,x,u^{\epsilon}}& \mathcal R^{t,x,u^{\epsilon}}
\end{pmatrix}, ~~~\mathcal G^{\epsilon}:=\mathcal G^{t,x,u^{\epsilon}}.
$$
The following matrix valued Hamiltonian system is introduced:
\begin{equation}
\left\{
\begin{aligned}
&\mathrm d \mathbb X_s^\epsilon= \big[A(s) \mathbb X_s^\epsilon+B(s) \mathbb U_s^\epsilon\big]\, \mathrm ds+\sum_{i=1}^d \big[C_i(s) \mathbb X_s^\epsilon +D_i(s)\mathbb U_s^\epsilon\big]\, \mathrm dW^i_s,\\
&\mathrm d \mathbb Y_s^\epsilon=-\left[A(s)^\top\mathbb Y_s^\epsilon +C(s)^\top\mathbb Z_s^\epsilon+\mathcal Q^{\epsilon}_s\mathbb X_s^\epsilon+(\mathcal S^{\epsilon}_s)^\top\mathbb U_s^\epsilon\right]\, \mathrm ds+\sum_{i=1}^{d} \mathbb Z_s^{\epsilon,i}\, \mathrm dW_s^i,\\
&\mathcal R^{\epsilon}_s\mathbb U_s^\epsilon+\mathcal S^{\epsilon}_s \mathbb X_s^\epsilon+B(s)^\top\mathbb Y_s^\epsilon+D(s)^\top\mathbb Z_s^\epsilon=0,\quad s\in [t,T],\\
&\mathbb X_t^\epsilon =I_n, ~~~~~~~~\mathbb Y_T^\epsilon=\mathcal G^{\epsilon}\mathbb X_T^\epsilon.
\end{aligned}
\right.
\end{equation}
By Lemma \ref{lem:2oc} and Theorem \ref{th:wp} again, the above Hamiltonian system admits a unique solution $(\mathbb X^\epsilon,\mathbb Y^\epsilon,\mathbb Z^\epsilon,\mathbb U^\epsilon)$.
Moreover, applying Sun et al. \cite[Lemma 6.4]{sxy21}, $\mathbb X_s^\epsilon$ is invertible a.s. and 
\begin{equation}\label{prop:vfpd:pf2}
\mathcal P^{\epsilon}_s:=\mathbb Y_s^\epsilon(\mathbb X_s^\epsilon)^{-1}  
\end{equation}
is the solution to the corresponding SRE.
Moreover, we have
\begin{equation}
\mathcal R^{\epsilon}_s+\sum_{i=1}^{d}D(s)^\top\mathcal P^{\epsilon}_sD(s)=\mathrm D^2_{uu}l(s,X^{t,x,u^\epsilon}_s,u^\epsilon_s)+\sum_{i=1}^{d}D(s)^\top\mathcal P^{\epsilon}_sD(s)\geq \delta I_m, \mbox{  a.e. a.s.}.
\end{equation}
By the continuity of $\mathrm D^2_{uu}l,D,X^{t,x,u^\epsilon},\mathcal P^{\epsilon}$ and the definition of $u^\epsilon$, we let $s=t$ and deduce that, for any $\epsilon\in(0,T-t]$,
\begin{equation}\label{prop:vfpd:pf3}
\mathrm D^2_{uu}l(t,x,u)+\sum_{i=1}^{d}D(t)^\top\mathcal P^{\epsilon}_tD(t)\geq \delta I_m.
\end{equation}

Next, we will let $\epsilon\to0$ in \eqref{prop:vfpd:pf3}.
To this aim, we study the limit of $\mathcal P^{\epsilon}_t$ as $\epsilon\to0$.
By letting $s=t$ in \eqref{prop:vfpd:pf2}, we have $\mathcal P^{\epsilon}_t=\mathbb Y_t^\epsilon$.
Noting that
$$
\mathbb E\bigg[\sup_{t\leq s\leq T}\vert X^{t,x,u^{\epsilon}}_s-\bar X^{t,x}_s\vert^2\bigg]\leq K\mathbb E\int_t^{t+\epsilon}\vert u-\bar u^{t,x}_s\vert^2\, \mathrm ds\to0,~~~\mbox{as }\epsilon\to0,
$$
we derive that $(X^{t,x,u^{\epsilon}},u^{\epsilon})$ converges to $(\bar X^{t,x},\bar u^{t,x})$ in $L^2$ sense as $\epsilon \to0$.
Similarly to the proof of \eqref{prop:cvg:1}, we are able to deduce that
\begin{equation*}
\begin{aligned}
\lim_{\epsilon\to0}\mathbb E\bigg[\sup_{t\leq s\leq T}[\vert\mathbb X^{\epsilon}_s-\nabla X^{t,x}_s\vert^2+\vert\mathbb Y^{\epsilon}_s-\nabla Y^{t,x}_s\vert^2]&\\
+\int_t^T(\vert\mathbb Z^{\epsilon}_s-\nabla Z^{t,x}_s\vert^2+\vert\mathbb U^{\epsilon}_s-\nabla u^{t,x}_s\vert^2)\, \mathrm ds\bigg]&=0.
\end{aligned}
\end{equation*}
Specially, we obtain that $\lim_{\epsilon\to0}\mathbb Y^\epsilon_t=\nabla Y_t^{t,x}$, leading to 
$$
\lim_{\epsilon\to0}\mathcal P^{\epsilon}_t=\mathcal P^{t,x}_t=\mathrm D^2_{xx}V(t,x).
$$
By letting $\epsilon\to0$ in \eqref{prop:vfpd:pf3}, we obtain \eqref{1:pov}.
\end{proof}
\begin{remark}\label{rmk:rc}
When further assuming that $g,l$ are convex in $(x,u)$ for any $t\in[0,T]$, the value function $V$ is proven to be convex in $x$ for any $t\in[0,T]$ (see Lemma \ref{lem:covf}).
As a result, $\mathrm D^2_{xx} V(t,x)$ is positive semidefinite for any $(t,x)\in[0,T]\times\mathbb R^n$.
Therefore, when considering Case 1  (see Page \pageref{case}), it is direct to obtain the regular condition \eqref{1:pov} by $\mathrm D^2_{uu}l(t,x,u)\geq\delta I_m$ for any $(t,x,u)\in[0,T]\times\mathbb R^{n+m}$.
\end{remark}
Based on the above properties of the value function $V$, we will present the feedback representation of $\bar u^{t,x}$. 
To this aim, we first make some preparations. 
For any $(t,x,u,p,Q) \in [0,T] \times \mathbb R^{n+m+m} \times \mathbb S^m$, we define
\begin{equation}
L(t,x,u,p,Q) := \langle u, p \rangle +\frac 1 2 \langle Qu, u \rangle +l(t,x,u).
\end{equation}
The following lemma comes from the implicit function theorem.
\begin{lemma}\label{lem:pre}
Let Assumption \ref{A3} hold. 
Then, for any $(t,x,p,Q) \in [0,T] \times \mathbb R^{n+m} \times \mathbb S^m$ satisfying that $Q+\mathrm D^2_{uu}l(t,x,u)$ is positive definite for any $u\in\mathbb R^m$, there exists a unique 
\begin{equation}\label{lem:pre:1}
\bar u := \varphi(t,x,p,Q)
\end{equation}
such that
\begin{equation}\label{lem:pre:2}
L(t,x,\bar u, p, Q) = \inf_{u\in \mathbb R^m} L(t,x, u, p, Q).
\end{equation}

Moreover, let $\delta>0$ be given.
For any $(t,x,p,Q) \in [0,T] \times \mathbb R^{n+m} \times \mathbb S^m$ satisfying the following condition:
\begin{equation}\label{lem:pre:4}
Q+\mathrm D^2_{uu}l(t,x,u)\geq \delta I_m,\mbox{ for any }u\in\mathbb R^m,
\end{equation}
the function $\varphi$ defined above is jointly continuous in $(t,x,p,Q)$ and satisfies the following growth condition:
there exists a constant $K_{\delta}>0$ depending on $\delta$ such that 
\begin{equation}\label{lem:pre:3}
|\varphi(t,x,p,Q)| \leq K_{\delta}(1+|x|+|p|).
\end{equation}

\end{lemma}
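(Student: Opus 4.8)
The plan is to reduce the minimization to its first-order condition and then read off existence, uniqueness, continuity and the growth bound from the (strict) convexity of $L$ in $u$. Set
\[
\Psi(t,x,u,p,Q):=\mathrm D_uL(t,x,u,p,Q)=p+Qu+\mathrm D_ul(t,x,u),
\]
which, by Assumption \ref{A3}, is jointly continuous in all its arguments and continuously differentiable in $u$ with $\mathrm D_u\Psi=Q+\mathrm D^2_{uu}l(t,x,u)$. Since this Jacobian is assumed positive definite for every $u$, the map $u\mapsto L(t,x,u,p,Q)$ is strictly convex, so it has at most one minimizer and a point $\bar u$ is a minimizer if and only if it solves the stationary equation $\Psi(t,x,\bar u,p,Q)=0$. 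This already yields the uniqueness asserted in \eqref{lem:pre:1}--\eqref{lem:pre:2}.

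For existence I would exploit monotonicity. Writing the increment of $\mathrm D_ul$ as an integral of $\mathrm D^2_{uu}l$ along the segment joining $u_2$ to $u_1$ gives, for all $u_1,u_2$,
\[
\langle\Psi(t,x,u_1,p,Q)-\Psi(t,x,u_2,p,Q),u_1-u_2\rangle=\Big\langle\Big[Q+\int_0^1\mathrm D^2_{uu}l\big(t,x,u_2+\tau(u_1-u_2)\big)\,\mathrm d\tau\Big](u_1-u_2),u_1-u_2\Big\rangle.
\]
Under the uniform bound \eqref{lem:pre:4} the bracketed matrix is $\geq\delta I_m$ for every $\tau$, so $u\mapsto\Psi(t,x,u,p,Q)$ is strongly monotone with modulus $\delta$; a continuous strongly monotone self-map of $\mathbb R^m$ is a homeomorphism, which furnishes the unique root $\bar u=\varphi(t,x,p,Q)$. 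I expect this coercivity step to be the main obstacle: pointwise positive definiteness of $Q+\mathrm D^2_{uu}l$ alone does \emph{not} force the strictly convex $L$ to attain its infimum (the smallest eigenvalue may degenerate as $|u|\to\infty$), and it is precisely the uniform lower bound that rules out escape to infinity and guarantees existence, so this is where the hypothesis must be used with care.

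Joint continuity then follows from the implicit function theorem combined with uniqueness. Invertibility of $\mathrm D_u\Psi=Q+\mathrm D^2_{uu}l$ makes $\varphi$ locally $C^1$ near any parameter, and global single-valuedness is secured by the uniqueness above; concretely, if $(t_n,x_n,p_n,Q_n)\to(t,x,p,Q)$ then the growth bound below keeps $\varphi(t_n,x_n,p_n,Q_n)$ bounded, any cluster point solves $\Psi(t,x,\cdot,p,Q)=0$ by joint continuity of $\Psi$, and uniqueness identifies it with $\varphi(t,x,p,Q)$, forcing convergence. Finally, for the growth bound \eqref{lem:pre:3} I would test strong monotonicity at $\bar u$ against $0$: since $\Psi(t,x,\bar u,p,Q)=0$,
\[
\delta|\bar u|^2\leq\langle\Psi(t,x,\bar u,p,Q)-\Psi(t,x,0,p,Q),\bar u\rangle=-\langle p+\mathrm D_ul(t,x,0),\bar u\rangle\leq\big(|p|+|\mathrm D_ul(t,x,0)|\big)|\bar u|,
\]
whence $\delta|\bar u|\leq|p|+|\mathrm D_ul(t,x,0)|$. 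Bounding $|\mathrm D_ul(t,x,0)|\leq|\mathrm D_ul(t,0,0)|+\|\mathrm D^2_{xu}l\|_\infty|x|\leq K(1+|x|)$, using boundedness of the second derivatives and continuity of $\mathrm D_ul(\cdot,0,0)$ on the compact interval $[0,T]$ (Assumption \ref{A3}), then gives $|\varphi(t,x,p,Q)|\leq K_{\delta}(1+|x|+|p|)$, as claimed.
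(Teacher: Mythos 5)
Your proposal is correct, and on the quantitative steps it coincides with the paper's own proof: uniqueness via strict convexity, the growth bound obtained by testing $\mathrm D_u L(t,x,\bar u,p,Q)-\mathrm D_u L(t,x,0,p,Q)$ against $\bar u$ to get $\delta|\bar u|\leq |p+\mathrm D_u l(t,x,0)|$, and the use of strong monotonicity of $\mathrm D_u L$ in $u$ are exactly the paper's computations (the paper leaves the final bound $|\mathrm D_u l(t,x,0)|\leq K(1+|x|)$ implicit, which you spell out). Where you genuinely diverge is existence: the paper invokes the implicit function theorem to assert that $\mathrm D_u L=0$ ``implicitly defines'' $\varphi$, which strictly speaking only gives local solvability near an already-known root, whereas you produce the root globally from strong monotonicity (a continuous, strongly monotone self-map of $\mathbb R^m$ is a homeomorphism), which is the cleaner argument. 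Your accompanying caveat is also well taken: under the first hypothesis alone, pointwise positive definiteness of $Q+\mathrm D^2_{uu}l(t,x,\cdot)$ does not force the infimum in \eqref{lem:pre:2} to be attained — for $m=1$ take $Q=0$, $\mathrm D_u l(t,x,u)=\arctan u$ and $p=-2$, where $L$ is strictly convex but decreasing, with infimum $-\infty$ — so the existence claim really does need a uniform lower bound such as \eqref{lem:pre:4}; this imprecision in the lemma's first paragraph is harmless for the paper, since wherever the lemma is applied (Proposition \ref{prop:cv}) the regular condition \eqref{1:pov} supplies \eqref{lem:pre:4} with a uniform $\delta$. Finally, for joint continuity you argue by boundedness (from \eqref{lem:pre:3}) plus uniqueness of the limiting root, while the paper derives the direct quantitative estimate $|\bar u-\bar u_0|\leq \delta^{-1}\big|\mathrm D_u L(t,x,\bar u_0,p,Q)\big|\to 0$ as $(t,x,p,Q)\to(t_0,x_0,p_0,Q_0)$; both are valid, the paper's being slightly sharper and your cluster-point route being more robust since it does not require evaluating $\mathrm D_u L$ at the foreign point $\bar u_0$.
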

\begin{proof}

For any $(t,x,u,p,Q)$, we calculate
\begin{equation}
\mathrm D_u L(t,x,u,p,Q) = p +Qu +\mathrm D_u l(t,x,u)
\end{equation}
and
\begin{equation}\label{lem:pre:pf1}
\mathrm D^2_{uu} L(t,x,u,p,Q) = Q +\mathrm D^2_{uu} l(t,x,u).
\end{equation} 
When $Q+\mathrm D^2_{uu}l(t,x,u)>0$ for any $u\in\mathbb R^m$, we deduce that $\mathrm D^2_{uu} L$ is invertible.
Then, the implicit function theorem (treating $t$ as a parameter and noting that $\mathrm D_u L$ is also continuously differentiable with respect to $x$, $p$ and $Q$)
shows that the equation $\mathrm D_u L(t,x,u,p,Q) =0$ implicitly defines a function, denoted by $\bar u = \varphi(t,x,p,Q)$ (i.e., \eqref{lem:pre:1}), such that
\begin{equation}
\mathrm D_u L(t,x,\bar u,p,Q) =0.
\end{equation} 
Meanwhile, in view of the positive definiteness of $\mathrm D^2_{uu} L$, we obtain \eqref{lem:pre:2}.

Next, we prove the continuity of $\varphi$. 
Let $\delta>0$ be given.
For any given $(t_0,x_0,p_0,Q_0)$ and any $(t,x,p,Q)$ satisfying \eqref{lem:pre:4},
we denote $\bar u_0 = \varphi(t_0,x_0,p_0,Q_0)$ and $\bar u =\varphi(t,x,p,Q)$, respectively. 
Then, we have
\begin{equation}\label{lem:pre:pf2}
\begin{aligned}
0 =\ & \mathrm D_u L(t,x,\bar u,p,Q)\\
=\ & \big[ \mathrm D_u L(t,x,\bar u,p,Q) -\mathrm D_u L(t,x,\bar u_0, p,Q) \big] +\mathrm D_u L(t,x,\bar u_0, p,Q)\\
=\ &\int_0^1\mathrm D^2_{uu} L(t,x,\bar u_0+\gamma (\bar u-\bar u_0), p,Q)(\bar u-\bar u_0)\,\mathrm d\gamma  +\mathrm D_u L(t,x,\bar u_0, p,Q).
\end{aligned}
\end{equation} 
Making the inner product with $\bar u -\bar u_0$ on the equality \eqref{lem:pre:pf2}, we obtain
\[
0 \geq \delta |\bar u -\bar u_0|^2 +\langle \mathrm D_u L(t,x,\bar u_0, p,Q), \bar u -\bar u_0 \rangle.
\]
Then,
\[
\begin{aligned}
|\bar u -\bar u_0| \leq\ & \frac 1 \delta \big|\mathrm D_u L(t,x,\bar u_0, p,Q)\big| = \frac 1 \delta \big| p +Q\bar u_0 +\mathrm D_u l(t,x,\bar u_0) \big|\\
\rightarrow\ & \frac 1 \delta \big| p_0 +Q_0\bar u_0 +\mathrm D_u l(t_0,x_0,\bar u_0) \big| = \frac 1 \delta \big|\mathrm D_u L(t_0,x_0,\bar u_0, p_0,Q_0)\big| =0,
\end{aligned}
\]
as $(t,x,p,Q) \rightarrow (t_0,x_0,p_0,Q_0)$. 
Due to the arbitrariness of $(t_0,x_0,p_0,Q_0)$, the continuity of $\varphi$ is proven.

Finally, we prove the growth condition \eqref{lem:pre:3}.
For any $(t,x,p,Q) \in [0,T] \times \mathbb R^{n+m} \times \mathbb S^m$ satisfying \eqref{lem:pre:4}, we deduce that $L$ is uniformly convex in $u$.
Therefore, we have
\begin{equation}
\begin{aligned}
\delta |\bar u|^2 \leq\ & \langle \mathrm D_u L(t,x,\bar u,p,Q) -\mathrm D_uL(t,x,0,p,Q), \bar u \rangle = -\langle \mathrm D_uL(t,x,0,p,Q), \bar u \rangle\\
=\ & -\langle p +\mathrm D_u l(t,x,0), \bar u \rangle \leq |p +\mathrm D_u l(t,x,0)| |\bar u|,
\end{aligned}
\end{equation}
leading to \eqref{lem:pre:3}.
\end{proof}

\begin{remark}
In the proof of Lemma \ref{lem:pre}, the application of the implicit function theorem can further yield that $\varphi$ is continuously differentiable with respect to $(x,p,Q)$.
We denote the derivatives with respect to $p$ and $Q$ as $\mathrm D_p \varphi$ and $\mathrm D_Q \varphi$, respectively. 
Then, for any given $\delta>0$, there exists a constant $K_{\delta}$ depending on $\delta$ such that, for any $(t,x,p,Q) \in [0,T] \times \mathbb R^{n+m} \times \mathbb S^m$ satisfying \eqref{lem:pre:4},
$\mathrm D_x\varphi$ and $\mathrm D_p\varphi$ are uniformly bounded by $K_{\delta}$, and $\mathrm D_Q\varphi$ is bounded by $K_{\delta}(1+|x| +|p|)$.
\end{remark}

Combining Lemmas \ref{lem:md} and \ref{lem:pre}, we conclude the following proposition, providing the continuous version of $(\bar Z^{t,x},\bar u^{t,x})$.
\begin{proposition}\label{prop:cv}
Under Assumptions \ref{A3} and \ref{A4}, for any $(t,x)\in[0,T]\times\mathbb R^n$, the trajectories of the processes $\bar u^{t,x}$ and $\bar Z^{t,x}$ are continuous in $s$ a.s.
Moreover, the optimal control $\bar u^{t,x}$ admits the following state feedback form:
\begin{equation}\label{prop:cv:1}
\bar u^{t,x}_s=\bar{\mathbbm u}(s,\bar X^{t,x}_s),
\end{equation} 
where
\begin{equation}\label{prop:cv:2}
\begin{aligned}
\bar{\mathbbm u}(t,x) :=& \varphi \bigg( t,x, B(t)^\top \mathrm D_u V(t,x) +\sum_{i=1}^d D_i(t)^\top \mathrm D^2_{xx} V(t,x) \big[ C_i(t)x +\sigma_i(t) \big],\\ 
&\sum_{i=1}^d D_i(t)^\top \mathrm D^2_{xx} V(t,x) D_i(t) \bigg).
\end{aligned}
\end{equation}
Consequently, $\bar{\mathbbm u}$ is jointly continuous in $(t,x)$ and linear growth in $x$.
\end{proposition}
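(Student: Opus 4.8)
The plan is to first obtain a pointwise-in-$s$ algebraic relation characterizing $\bar u^{t,x}_s$, and then read off continuity and the feedback form from the regularity already established for $V$. The starting point is the stationarity condition in the Hamiltonian system \eqref{2:HSo}, namely $\mathrm D_ul(s,\bar X^{t,x}_s,\bar u^{t,x}_s)+B(s)^\top\bar Y^{t,x}_s+D(s)^\top\bar Z^{t,x}_s=0$ for a.e.\ $s$, a.s. To turn this into a relation involving only $\bar X^{t,x}$ and $\bar u^{t,x}$, I would eliminate $\bar Z^{t,x}$ by Malliavin calculus. By Lemma \ref{lem:md}, $\{\mathbf D_s\bar Y^{t,x}_s\}$ is a version of $\{\bar Z^{t,x}_s\}$, while Corollary \ref{cor:dcf} gives $\bar Y^{t,x}_s=\mathrm D_xV(s,\bar X^{t,x}_s)$. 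Since $\mathrm D_xV(s,\cdot)$ is $C^1$ with bounded derivative $\mathrm D^2_{xx}V$ (Proposition \ref{prop:vfd}) and $\bar X^{t,x}\in\mathbb L^a_{1,2}$ (Lemma \ref{lem:md}), the chain rule for the Malliavin derivative yields $\mathbf D^i_\theta\bar Y^{t,x}_s=\mathrm D^2_{xx}V(s,\bar X^{t,x}_s)\mathbf D^i_\theta\bar X^{t,x}_s$; evaluating on the diagonal $\theta=s$ and inserting the diagonal value $\mathbf D^i_s\bar X^{t,x}_s=C_i(s)\bar X^{t,x}_s+D_i(s)\bar u^{t,x}_s+\sigma_i(s)$ read off from the first equation of Lemma \ref{lem:md}, I obtain
\begin{equation*}
\bar Z^{t,x,i}_s=\mathrm D^2_{xx}V(s,\bar X^{t,x}_s)\big[C_i(s)\bar X^{t,x}_s+D_i(s)\bar u^{t,x}_s+\sigma_i(s)\big],\quad i=1,\dots,d.
\end{equation*}

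Substituting this into the stationarity condition and collecting the terms linear in $\bar u^{t,x}_s$, the condition becomes $\mathrm D_uL(s,\bar X^{t,x}_s,\bar u^{t,x}_s,p_s,Q_s)=0$ for a.e.\ $s$, a.s., where $Q_s:=\sum_{i=1}^dD_i(s)^\top\mathrm D^2_{xx}V(s,\bar X^{t,x}_s)D_i(s)$ and, using $\bar Y^{t,x}_s=\mathrm D_xV(s,\bar X^{t,x}_s)$, $p_s:=B(s)^\top\mathrm D_xV(s,\bar X^{t,x}_s)+\sum_{i=1}^dD_i(s)^\top\mathrm D^2_{xx}V(s,\bar X^{t,x}_s)[C_i(s)\bar X^{t,x}_s+\sigma_i(s)]$. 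By Proposition \ref{prop:vfpd}, the regular condition \eqref{1:pov} holds, so $Q_s+\mathrm D^2_{uu}l(s,\bar X^{t,x}_s,u)\geq\delta I_m$ for every $u$; hence the hypotheses \eqref{lem:pre:4} of Lemma \ref{lem:pre} are satisfied at $(s,\bar X^{t,x}_s,p_s,Q_s)$. The uniqueness part of that lemma then forces $\bar u^{t,x}_s=\varphi(s,\bar X^{t,x}_s,p_s,Q_s)=\bar{\mathbbm u}(s,\bar X^{t,x}_s)$ for a.e.\ $s$, a.s., which is precisely the feedback form \eqref{prop:cv:1}--\eqref{prop:cv:2}.

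For the regularity conclusions, $\bar{\mathbbm u}$ is a composition of $\varphi$ with $\mathrm D_xV$, $\mathrm D^2_{xx}V$ and the continuous bounded coefficients $B,C_i,D_i,\sigma_i$. Since $\varphi$ is jointly continuous under \eqref{lem:pre:4} and $\mathrm D_xV,\mathrm D^2_{xx}V$ are jointly continuous in $(t,x)$ (Proposition \ref{prop:vfd}), $\bar{\mathbbm u}$ is jointly continuous in $(t,x)$; the growth bound \eqref{lem:pre:3} for $\varphi$, together with the boundedness of $\mathrm D^2_{xx}V$ and the linear growth of $\mathrm D_xV$ (which follows from \eqref{prop:est:1} and $\mathrm D_xV(t,x)=\bar Y^{t,x}_t$), gives $|\bar{\mathbbm u}(t,x)|\leq K(1+|x|)$. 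Finally, because $\bar{\mathbbm u}$ is continuous and $s\mapsto\bar X^{t,x}_s$ is a.s.\ continuous, $s\mapsto\bar{\mathbbm u}(s,\bar X^{t,x}_s)$ is a.s.\ continuous and furnishes the asserted continuous version of $\bar u^{t,x}$; reinserting it into the displayed formula for $\bar Z^{t,x,i}_s$ exhibits a continuous version of $\bar Z^{t,x}$ as well.

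The main obstacle is the diagonal Malliavin computation in the first paragraph: one must justify the chain rule for $\mathbf D_\theta\big(\mathrm D_xV(s,\bar X^{t,x}_s)\big)$ and, more delicately, the passage to the diagonal $\theta=s$, so that $\mathbf D_s\bar Y^{t,x}_s$ genuinely coincides with $\bar Z^{t,x}_s$ and the diagonal trace $\mathbf D^i_s\bar X^{t,x}_s$ equals $C_i(s)\bar X^{t,x}_s+D_i(s)\bar u^{t,x}_s+\sigma_i(s)$. Both points rest on Lemma \ref{lem:md} (in particular on its final assertion that $\{\mathbf D_s\bar Y^{t,x}_s\}$ is a version of $\{\bar Z^{t,x}_s\}$) and on the boundedness of $\mathrm D^2_{xx}V$; everything downstream is then a routine application of Lemma \ref{lem:pre} and the continuity results of Propositions \ref{prop:vfd} and \ref{prop:vfpd}.
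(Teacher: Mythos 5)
Your proposal is correct and follows essentially the same route as the paper: the paper's proof likewise obtains $\bar Z^{t,x,i}_s=\mathbf D^i_s\bar Y^{t,x}_s=\mathrm D^2_{xx}V(s,\bar X^{t,x}_s)\big[C_i(s)\bar X^{t,x}_s+D_i(s)\bar u^{t,x}_s+\sigma_i(s)\big]$ from Corollary \ref{cor:dcf} and Lemma \ref{lem:md}, substitutes this into the stationarity condition, invokes the regular condition of Proposition \ref{prop:vfpd} to apply Lemma \ref{lem:pre}, and reads off continuity and linear growth exactly as you do. Your explicit justification of the Malliavin chain rule and the diagonal evaluation is simply a more careful spelling-out of the step the paper asserts in one line.
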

\begin{proof}
By Corollary \ref{cor:dcf} and Lemma \ref{lem:md}, we have 
\begin{equation}\label{prop:cv:pf1}
\begin{aligned}
\bar Z^{t,x,i}_s&=\mathbf D_s^i\bar Y^{t,x}_s =\mathrm D^2_{xx}V(s,\bar X^{t,x}_s)\mathbf D_s^i\bar X^{t,x}_s\\
&=\mathrm D^2_{xx}V(s,\bar X^{t,x}_s)[ C_i(s)\bar X^{t,x}_s+D_i(s)\bar u^{t,x}_s+\sigma_i(s)].
\end{aligned}
\end{equation} 
Substituting it into the stationary condition leads to
\begin{equation*}
\begin{aligned}
&\mathrm D_ul(s,\bar X^{t,x}_s,\bar u^{t,x}_s)+B(s)^\top \mathrm D_{x}V(s,\bar X^{t,x}_s)\\
&+\sum_{i=1}^dD_i(s)^\top\mathrm D^2_{xx}V(s,\bar X^{t,x}_s)[ C_i(s) \bar X^{t,x}_s+D_i(s)\bar u^{t,x}_s+\sigma_i(s)]=0.
\end{aligned}
\end{equation*} 
By Proposition \ref{prop:vfpd}, there exists a constant $\delta >0$ such that 
\begin{equation*}
\mathrm D^2_{uu}l(s,\bar X^{t,x}_s,\bar u^{t,x}_s)+\sum_{i=1}^dD_i(s)^\top\mathrm D^2_{xx}V(s,\bar X^{t,x}_s)D_i(s)\geq\delta I_m.
\end{equation*} 
Applying Lemma \ref{lem:pre}, we derive the state feedback form \eqref{prop:cv:1} for the optimal control $\bar u^{t,x}$. 
Due to the continuity of $\varphi$, we deduce that $\bar{\mathbbm u} $ is jointly continuous in $(t,x)$, which also leads to the continuity of $\bar u^{t,x}$.
Since $\mathrm D^2_{xx}V$ is bounded (see Proposition \ref{prop:vfd}), $\bar{\mathbbm u} $ is linear growth in $x$. 
Moreover, by \eqref{prop:cv:pf1}, $\bar Z^{t,x}$ is also verified to be continuous in $s$.
\end{proof}
By Lemma \ref{lem:pre} and the definitions of $\bar{\mathbbm u}, L,H$ and $\mathcal H$, we deduce that, for any $(t,x) \in [0,T] \times \mathbb R^n$, 
\begin{equation}\label{4:inf}
H \big( t,x,\mathrm D_x V(t,x), \mathrm D^2_{xx} V(t,x) \big)=\mathcal H \big( t,x,\mathrm D_x V(t,x), \mathrm D^2_{xx} V(t,x), \bar{\mathbbm u}(t,x)\big).
\end{equation}

\section{Dynamic programming and classical solution to HJB equation}\label{sec5}
It is well-known that the value function $V$ of the optimal control problem is the unique viscosity solution to the corresponding HJB equation under some appropriate conditions. 
In this section, we would like to prove that $V$ is the unique classical solution to the HJB equation \eqref{1:HJB}, which needs to prove $V\in C^{1,2}([0,T]\times\mathbb R^n;\mathbb R)$. 

We have already proven that $V$ is twice continuously differentiable with respect to $x$ in Proposition \ref{prop:vfd} and its second-order derivative $\mathrm D^2_{xx}V$ is uniformly bounded.
Moreover, Propositions \ref{prop:vfpd} shows that the value function also satisfies the regular condition \eqref{1:pov}.
As mentioned in Remark \ref{rmk:rc}, under Case 1 (see Page \pageref{case}), the regular condition \eqref{1:pov} can be replaced by the condition that $V$ is convex in $x$ for any $t\in[0,T]$.
When it reduces to the LQ case, the value function $V$ is quadratic in $x$ and the regular condition \eqref{1:pov} is reduced to the following uniformly positive definiteness:
\begin{equation}\label{5:lqrc}
R(t)+\sum_{i=1}^dD_i(t)^\top P(t) D_i(t)\geq \delta I_m,~~~~\mbox{for any }t\in[0,T],
\end{equation}
where $P$ is the solution to the corresponding Riccati equation.
The differentiability with respect to $t$ will be shown in this section by the dynamic programming approach.

First of all, we provide the following lemma, which is prepared for the dynamic programming principle.
\begin{lemma}\label{lem:dpp}
  Under Assumptions \ref{A3} and \ref{A4}, for any $(t,\xi)\in[0,T]\times L^2_{\mathcal F_t}(\Omega;\mathbb R^n)$,
  \begin{equation}
    V(t,\xi)=\mathbb E\left[ g(\bar X^{t,\xi}_{T})+\int_t^{T}l(s,\bar X^{t,\xi}_s,\bar u^{t,\xi}_s)\, \mathrm d s\bigg\vert\mathcal F_t\right].
  \end{equation}
\end{lemma}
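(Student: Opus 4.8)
The plan is to establish the identity first for deterministic initial states $x$, then for simple $\mathcal F_t$-measurable random variables, and finally for general $\xi\in L^2_{\mathcal F_t}(\Omega;\mathbb R^n)$ by an approximation argument, in the spirit of the proof of Corollary \ref{cor:dcf}. First, I would fix a deterministic $x\in\mathbb R^n$. By Remark \ref{rmk} the optimal quadruple $(\bar X^{t,x},\bar Y^{t,x},\bar Z^{t,x},\bar u^{t,x})$ is adapted to $\mathbb F^t$ and hence independent of $\mathcal F_t$; consequently the random variable $g(\bar X^{t,x}_T)+\int_t^T l(s,\bar X^{t,x}_s,\bar u^{t,x}_s)\,\mathrm ds$ is independent of $\mathcal F_t$. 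Taking the conditional expectation given $\mathcal F_t$ therefore coincides with the unconditional expectation, which by definition equals $J(t,x;\bar u^{t,x})=V(t,x)$. This settles the deterministic case.

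Second, let $\xi=\sum_k x_k\mathbbm 1_{A_k}$ be a simple random variable, where $\{A_k\}\subset\mathcal F_t$ is a finite measurable partition of $\Omega$ and $x_k\in\mathbb R^n$. I would verify that the concatenation $\big(\sum_k\mathbbm 1_{A_k}\bar X^{t,x_k},\sum_k\mathbbm 1_{A_k}\bar Y^{t,x_k},\sum_k\mathbbm 1_{A_k}\bar Z^{t,x_k},\sum_k\mathbbm 1_{A_k}\bar u^{t,x_k}\big)$ solves the Hamiltonian system \eqref{2:hst} with initial datum $\xi$. Since each $\mathbbm 1_{A_k}$ is $\mathcal F_t$-measurable and constant in time, it factors through the It\^o integrals; the disjointness of the $A_k$ together with $\sum_k\mathbbm 1_{A_k}=1$ ensures that the nonlinear terms respect the decomposition (for instance $\mathrm D_xg(\sum_k\mathbbm 1_{A_k}\bar X^{t,x_k}_T)=\sum_k\mathbbm 1_{A_k}\mathrm D_xg(\bar X^{t,x_k}_T)$), so that the drift, the stationary condition and the initial/terminal data reduce to the $k$-th relations weighted by $\mathbbm 1_{A_k}$. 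By the uniqueness part of the $(t,\xi)$-version of Theorem \ref{th:wp}, this concatenation equals $(\bar X^{t,\xi},\bar Y^{t,\xi},\bar Z^{t,\xi},\bar u^{t,\xi})$. The same decomposition gives $g(\bar X^{t,\xi}_T)+\int_t^T l\,\mathrm ds=\sum_k\mathbbm 1_{A_k}\big[g(\bar X^{t,x_k}_T)+\int_t^T l(s,\bar X^{t,x_k}_s,\bar u^{t,x_k}_s)\,\mathrm ds\big]$; taking $\mathbb E[\,\cdot\,|\mathcal F_t]$, pulling out the $\mathcal F_t$-measurable indicators, and invoking the deterministic case yields $\sum_k\mathbbm 1_{A_k}V(t,x_k)=V(t,\xi)$.

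Finally, for general $\xi\in L^2_{\mathcal F_t}(\Omega;\mathbb R^n)$ I would approximate it in $L^2$ by simple random variables $\xi_n$. The continuous-dependence estimate in the initial value, obtained by applying \eqref{2:est} to the difference Hamiltonian system exactly as in the proof of Proposition \ref{prop:est}, gives $\mathbb E[\sup_{s}|\bar X^{t,\xi_n}_s-\bar X^{t,\xi}_s|^2+\int_t^T|\bar u^{t,\xi_n}_s-\bar u^{t,\xi}_s|^2\,\mathrm ds]\le K\mathbb E|\xi_n-\xi|^2\to0$. Combined with the quadratic growth and local Lipschitz continuity of $g,l$ and the a priori bound \eqref{prop:est:1}, this forces the cost functionals to converge in $L^1$, whence the conditional expectations converge in $L^1$. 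On the other side, since $\mathrm D_xV(t,\cdot)=\bar Y^{t,\cdot}_t$ has at most linear growth by \eqref{prop:est:1}, $V(t,\cdot)$ is locally Lipschitz with linearly growing gradient, so $V(t,\xi_n)\to V(t,\xi)$ in $L^1$. Passing to the limit in the identity for $\xi_n$ delivers the claim.

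The main obstacle I anticipate is the last step: securing genuine $L^1$ convergence of the cost functionals, which requires carefully combining the $L^2$ stability of $(\bar X^{t,\xi},\bar u^{t,\xi})$ in the initial value with the merely quadratic (not globally Lipschitz) growth of $g$ and $l$; a truncation or uniform-integrability argument, supported by the growth bound \eqref{prop:est:1}, is needed to control the cross terms. By contrast, the gluing step for simple $\xi$ is conceptually central but technically routine once the $\mathcal F_t$-measurability of the indicators and the uniqueness of the Hamiltonian system are exploited.
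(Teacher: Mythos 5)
Your proposal follows essentially the same route as the paper's proof in Appendix \ref{APP-B}: first the identity for simple $\mathcal F_t$-measurable $\xi$ (the paper allows countably many values), obtained by gluing the solutions $(\bar X^{t,x_k},\bar Y^{t,x_k},\bar Z^{t,x_k},\bar u^{t,x_k})$ over the partition and invoking the independence from Remark \ref{rmk}, and then an $L^2$ approximation of general $\xi$ using the continuous-dependence estimate derived from \eqref{2:est} together with the (linear-growth) Lipschitz regularity of $V(t,\cdot)$ coming from Proposition \ref{prop:vfd}. The only obstacle you flag, the $L^1$ convergence of the costs, needs no truncation or uniform-integrability device: since $\mathrm D_xg,\mathrm D_xl,\mathrm D_ul$ are Lipschitz and hence of linear growth, the cost differences are bounded by $K\big(1+\vert\bar X^{t,\xi}\vert+\vert\bar X^{t,\xi_n}\vert+\vert\bar u^{t,\xi}\vert+\vert\bar u^{t,\xi_n}\vert\big)\big(\vert\bar X^{t,\xi_n}-\bar X^{t,\xi}\vert+\vert\bar u^{t,\xi_n}-\bar u^{t,\xi}\vert\big)$, and Cauchy--Schwarz combined with \eqref{2:est} gives the convergence directly, exactly as in the paper.
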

The proof is direct but lengthy, so we would like to put it in Appendix \ref{APP-B}. 

With the help of Lemma \ref{lem:dpp}, we now prove the following dynamic programming principle.
\begin{theorem}\label{th:dpp}
  Under Assumptions \ref{A3} and \ref{A4}, for any $0\leq t< t+h< T$, the following equality holds:
  \begin{equation}\label{th:dpp1}
    V(t,x)=\mathbb E\left[ V(t+h,\bar X^{t,x}_{t+h})+\int_t^{t+h}l(s,\bar X^{t,x}_s,\bar u^{t,x}_s)\, \mathrm d s\right].
  \end{equation} 
\end{theorem}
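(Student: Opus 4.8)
The plan is to reduce everything to the two representation formulas supplied by Lemma \ref{lem:dpp}, applied once at the initial pair $(t,x)$ and once at the intermediate pair $(t+h,\bar X^{t,x}_{t+h})$, the two being glued together by a flow (restart) property of the optimal solution. First I would record that $\xi:=\bar X^{t,x}_{t+h}\in L^2_{\mathcal F_{t+h}}(\Omega;\mathbb R^n)$ and that Assumption \ref{A4} guarantees uniform convexity of $u\mapsto J(t+h,\xi;u)$; hence the $(t,\xi)$-version of Theorem \ref{th:wp} applies to the subproblem started at $(t+h,\xi)$, producing a unique optimal quadruple $(\bar X^{t+h,\xi},\bar Y^{t+h,\xi},\bar Z^{t+h,\xi},\bar u^{t+h,\xi})\in\mathcal M[t+h,T]$.

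The crucial step is the flow property: the restriction of $(\bar X^{t,x},\bar Y^{t,x},\bar Z^{t,x},\bar u^{t,x})$ to $[t+h,T]$ coincides with $(\bar X^{t+h,\xi},\bar Y^{t+h,\xi},\bar Z^{t+h,\xi},\bar u^{t+h,\xi})$. To see this I would observe that the restricted quadruple lies in $\mathcal M[t+h,T]$ and, by merely reading off the equations \eqref{2:hst} on the subinterval $[t+h,T]$, satisfies the forward SDE, the adjoint BSDE, the stationarity condition and the terminal condition $\bar Y^{t,x}_T=\mathrm D_xg(\bar X^{t,x}_T)$, together with the initial value $\bar X^{t,x}_{t+h}=\xi$. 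Since all coefficients are deterministic, these are exactly the equations of the Hamiltonian system associated with $(t+h,\xi)$, so uniqueness in Theorem \ref{th:wp} forces the identification. In particular $\bar X^{t,x}_s=\bar X^{t+h,\xi}_s$ and $\bar u^{t,x}_s=\bar u^{t+h,\xi}_s$ for $s\in[t+h,T]$.

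With the flow property in hand, I would apply Lemma \ref{lem:dpp} at $(t+h,\xi)$ and substitute the identification to obtain
\[
V(t+h,\bar X^{t,x}_{t+h})=\mathbb E\!\left[g(\bar X^{t,x}_T)+\int_{t+h}^T l(s,\bar X^{t,x}_s,\bar u^{t,x}_s)\,\mathrm ds\,\Big|\,\mathcal F_{t+h}\right].
\]
Adding the $\mathcal F_{t+h}$-measurable quantity $\int_t^{t+h}l(s,\bar X^{t,x}_s,\bar u^{t,x}_s)\,\mathrm ds$ inside the conditional expectation and then taking the total expectation, the tower property collapses the right-hand side of \eqref{th:dpp1} to $\mathbb E[g(\bar X^{t,x}_T)+\int_t^T l(s,\bar X^{t,x}_s,\bar u^{t,x}_s)\,\mathrm ds]$. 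Finally I would invoke Lemma \ref{lem:dpp} at $(t,x)$: since $x$ is deterministic, $V(t,x)$ is a constant (cf. Remark \ref{rmk}), so taking expectations of the conditional identity there yields $V(t,x)=\mathbb E[g(\bar X^{t,x}_T)+\int_t^T l(s,\bar X^{t,x}_s,\bar u^{t,x}_s)\,\mathrm ds]$, and comparing the two displays gives \eqref{th:dpp1}.

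The main obstacle is the flow property; everything else is bookkeeping with conditional expectations. The only point there requiring care is that the uniqueness statement is most naturally read with a fixed deterministic starting point, whereas here the initial datum $\xi=\bar X^{t,x}_{t+h}$ is random; this is precisely why the generalization of Theorem \ref{th:wp} to initial pairs $(t,\xi)\in[0,T]\times L^2_{\mathcal F_t}(\Omega;\mathbb R^n)$, recorded after its proof, is needed, and I would cite it explicitly to legitimize the identification.
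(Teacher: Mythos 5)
Your proposal is correct and follows essentially the same route as the paper: the paper's proof likewise asserts the flow property $(\bar X^{t+h,\bar X^{t,x}_{t+h}},\dots)=(\bar X^{t,x},\dots)$ on $[t+h,T]$, applies Lemma \ref{lem:dpp} at the random pair $(t+h,\bar X^{t,x}_{t+h})$, and concludes by comparing with $V(t,x)=J(t,x;\bar u^{t,x})$ via the tower property. The only difference is that you spell out the uniqueness argument behind the flow property (correctly noting it requires the $(t,\xi)$-extension of Theorem \ref{th:wp} for random initial data), which the paper states without proof, and your conditioning on $\mathcal F_{t+h}$ is the correct reading of what the paper writes (with an apparent typo) as $\mathcal F_t$.
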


\begin{proof}
Noting that 
$$
(\bar X^{t+h,\bar X^{t,x}_{t+h}}_s,\bar Y^{t+h,\bar X^{t,x}_{t+h}}_s,\bar Z^{t+h,\bar X^{t,x}_{t+h}}_s,\bar u^{t+h,\bar X^{t,x}_{t+h}}_s)=(\bar X^{t,x}_s,\bar Y^{t,x}_s,\bar Z^{t,x}_s,\bar u^{t,x}_s),
$$
for almost all $(s,\omega)\in[t+h,T]\times\Omega$ and by Lemma \ref{lem:dpp}, we derive
\begin{equation}
  V(t+h,\bar X^{t,x}_{t+h})=\mathbb E\left[ g(\bar X^{t,x}_{T})+\int_{t+h}^{T}l(s,\bar X^{t,x}_s,\bar u^{t,x}_s)\, \mathrm d s\bigg\vert\mathcal F_t\right].
\end{equation}
Comparing with $V(t,x)=J(t,x;\bar u^{t,x})$, we conclude \eqref{th:dpp1} by the tower property for conditional expectations.
\end{proof}

Applying Theorem \ref{th:dpp}, we now prove the following existence result of the classical solution of the HJB equation \eqref{1:HJB}.

\begin{theorem}\label{th:HJBe}
Under Assumptions \ref{A3} and \ref{A4}, the value function $V\in C^{1,2}([0,T]\times \mathbb R^n;\mathbb R)$. 
Moreover, $V$ is a classical solution to the HJB equation \eqref{1:HJB}.
\end{theorem}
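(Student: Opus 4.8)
The plan is to leverage the dynamic programming principle (Theorem \ref{th:dpp}) together with the regularity already established in Proposition \ref{prop:vfd}, namely that $\mathrm D_xV$ and $\mathrm D^2_{xx}V$ exist and are jointly continuous in $(t,x)$, with $\mathrm D^2_{xx}V$ bounded. Since $V\in C^{0,2}$ is already known, the only missing ingredient for $V\in C^{1,2}$ is the existence and continuity of $\partial_t V$, after which the PDE itself will follow. I would define the candidate
$$F(t,x):=-(\mathcal L V)(t,x)-H(t,x,\mathrm D_xV(t,x),\mathrm D^2_{xx}V(t,x)),$$
which is jointly continuous in $(t,x)$ by the joint continuity of $\mathrm D_xV,\mathrm D^2_{xx}V$ and of the coefficients, and then prove $\partial_t V=F$.

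The key computation starts from the DPP identity \eqref{th:dpp1}. For fixed $h>0$, since $V(t+h,\cdot)\in C^2(\mathbb R^n)$, I would apply It\^o's formula to the map $x\mapsto V(t+h,x)$ along the trajectory $\bar X^{t,x}_s$ on $[t,t+h]$; this is legitimate \emph{without} knowing any $t$-differentiability of $V$, precisely because the first argument is frozen. Taking expectation (the martingale part vanishes) and combining with \eqref{th:dpp1} yields
$$V(t,x)-V(t+h,x)=\mathbb E\int_t^{t+h}\Big[(\text{It\^o drift of }V(t+h,\cdot)\text{ at }\bar X^{t,x}_s)+l(s,\bar X^{t,x}_s,\bar u^{t,x}_s)\Big]\,\mathrm ds.$$
A direct algebraic rearrangement (completing the drift and running cost into the operator $\mathcal L$ and the Hamiltonian $\mathcal H$) shows that the integrand equals $(\mathcal L V)(t+h,\bar X^{t,x}_s)+\mathcal H(s,\bar X^{t,x}_s,\mathrm D_xV(t+h,\bar X^{t,x}_s),\mathrm D^2_{xx}V(t+h,\bar X^{t,x}_s),\bar u^{t,x}_s)$, with $V(t+h,\cdot)$ supplying the spatial arguments.

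Next I would divide by $h$ and let $h\to0$. On the left this produces $-\partial^+_t V(t,x)$, the right time-derivative. On the right I use that $\bar u^{t,x}$ admits a continuous version with $\bar u^{t,x}_s=\bar{\mathbbm u}(s,\bar X^{t,x}_s)$ (Proposition \ref{prop:cv}), so that as $s\downarrow t$ one has $\bar X^{t,x}_s\to x$ and $\bar u^{t,x}_s\to\bar{\mathbbm u}(t,x)$ pathwise; together with the joint continuity of $\mathrm D_xV,\mathrm D^2_{xx}V$ this makes the integrand a pathwise-continuous process whose value at $s=t$ is the deterministic quantity $(\mathcal L V)(t,x)+\mathcal H(t,x,\mathrm D_xV,\mathrm D^2_{xx}V,\bar{\mathbbm u}(t,x))$. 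Since $\bar{\mathbbm u}(t,x)$ is the minimizer, \eqref{4:inf} identifies this with $(\mathcal L V)(t,x)+H(t,x,\mathrm D_xV,\mathrm D^2_{xx}V)=-F(t,x)$, so that $\partial^+_t V(t,x)=F(t,x)$.

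Finally, since $V(\cdot,x)$ is continuous (from the estimates of Proposition \ref{prop:est} and the representation $V(t,x)=J(t,x;\bar u^{t,x})$) and its right-derivative $F$ is continuous in $t$, a standard elementary lemma upgrades this to genuine two-sided differentiability with $\partial_t V=F$ continuous; hence $V\in C^{1,2}$ and $\partial_t V+\mathcal L V+H=0$, while the terminal condition $V(T,x)=g(x)$ is immediate from the definition. I expect the main obstacle to be the rigorous passage $h\to0$ under the expectation, i.e. justifying $\tfrac1h\mathbb E\int_t^{t+h}(\cdot)\,\mathrm ds\to(\cdot)\big|_{s=t}$: this requires combining the pathwise continuity of the integrand with a domination argument, using the $L^2$ bounds from Proposition \ref{prop:est}, the boundedness of $\mathrm D^2_{xx}V$, and the linear growth of $\bar{\mathbbm u}$ to secure uniform integrability of the averages over small $h$.
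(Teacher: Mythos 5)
Your proposal is correct and, for the right time-derivative, coincides with the paper's proof: both start from the dynamic programming identity of Theorem \ref{th:dpp}, apply It\^o's formula to $x\mapsto V(t+h,x)$ along $\bar X^{t,x}$ with the time argument frozen (exactly the paper's $F^{t,x,h}(s)$), identify the limit of the integrand via the continuous feedback representation $\bar u^{t,x}_s=\bar{\mathbbm u}(s,\bar X^{t,x}_s)$ of Proposition \ref{prop:cv} and the identity \eqref{4:inf}, and pass to the limit by dominated convergence using the uniform bound $\mathbb E[\vert F^{t,x,h}(s)\vert]\leq K(1+\vert x\vert^2)$ coming from the boundedness of $\mathrm D^2_{xx}V$ and the linear growth of $\bar{\mathbbm u}$ and $\mathrm D_xV$. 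Where you genuinely diverge is the left derivative: the paper repeats the computation from the initial time $t-h$, studying $\frac1h[V(t,x)-V(t-h,x)]=-\frac1h\int_{t-h}^{t}\mathbb E[F^{t-h,x,h}(s)]\,\mathrm ds$ and using the continuous-dependence estimate \eqref{prop:est:2} to get $\bar X^{t-h,x}_s\to x$ in $L^2$, hence convergence in probability of the integrand together with the same domination; you instead invoke the elementary real-analysis lemma that a continuous function with continuous right derivative is continuously differentiable. Your route is legitimate and arguably cleaner (one limit computation instead of two), but it carries two hypotheses you must discharge: continuity of $t\mapsto V(t,x)$, which indeed follows from \eqref{prop:est:2}, the quadratic growth of $g,l$ and the representation $V(t,x)=J(t,x;\bar u^{t,x})$, and continuity of $t\mapsto F(t,x)$, which follows from the joint continuity of $\mathrm D_xV$, $\mathrm D^2_{xx}V$ (Proposition \ref{prop:vfd}) and of $\bar{\mathbbm u}$ after rewriting $H$ via \eqref{4:inf} --- note that joint continuity of $H$ is not immediate from its definition as an infimum, so this rewriting is genuinely needed. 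One cosmetic imprecision: in your rearrangement the coefficients $A,B,b,C_i,D_i,\sigma_i$ in the It\^o drift are evaluated at time $s$, not $t+h$, so the integrand is the paper's $F^{t,x,h}(s)$ rather than literally $(\mathcal LV)(t+h,\bar X^{t,x}_s)+\mathcal H(s,\ldots)$; this is harmless in the limit $(h,s)\to(0,t)$ by continuity of the coefficients, but the split into $\mathcal L$ and $\mathcal H$ should only be performed after that limit, as the paper does in \eqref{th:HJBe:pf2}.
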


\begin{proof}
For any given $(t,x)\in[0,T]\times \mathbb R^n$ and $h>0$, by It\^o's formula and Theorem \ref{th:dpp}, we have
\begin{equation}\label{th:HJBe:pf1}
\begin{aligned}
\frac{1}{h}[V(t+h,x)-V(t,x)]=&-\frac{1}{h}\mathbb E[V(t+h,\bar X^{t,x}_{t+h})-V(t+h,x)]\\
&+\frac 1h\mathbb E[V(t+h,\bar X^{t,x}_{t+h})-V(t,x)]\\
=&-\frac{1}{h}\int_t^{t+h}\mathbb E[F^{t,x,h}(s)]\, \mathrm d s,
\end{aligned}
\end{equation}
where
\begin{equation}
\begin{aligned}
F^{t,x,h}(s)=&l(s,\bar X^{t,x}_s,\bar u^{t,x}_s)+\langle \mathrm D_xV(t+h,\bar X^{t,x}_s),A(s)\bar X^{t,x}_s+B(s)\bar u^{t,x}_s+b(s)\rangle\\
&+\frac12\sum_{i=1}^d\langle \mathrm D^2_{xx}V(t+h,\bar X^{t,x}_s)[C_i(s)\bar X^{t,x}_s+D_i(s)\bar u^{t,x}_s+\sigma_i(s)],\\
&\qquad C_i(s)\bar X^{t,x}_s+D_i(s)\bar u^{t,x}_s+\sigma_i(s)\rangle.
\end{aligned}
\end{equation}
By Proposition \ref{prop:cv} and the continuity of $l,\bar{\mathbbm u},\mathrm D_xV$ and $\mathrm D^2_{xx}V$, we derive the following convergence:
$$
\lim_{\substack{(h,s)\to(0,t)\\ t\leq s\leq t+h}}F^{t,x,h}(s)=F^{t,x,0}(t),
$$
where
\begin{equation}
\begin{aligned}
&F^{t,x,0}(t)=l(t,x,\bar{\mathbbm u}(t,x))+\langle \mathrm D_xV(t,x),A(t)x+B(t)\bar{\mathbbm u}(t,x)+b(t)\rangle\\
&+\frac12\sum_{i=1}^d\langle \mathrm D^2_{xx}V(t,x)[C_i(t)x+D_i(t)\bar{\mathbbm u}(t,x)+\sigma_i(t)],C_i(t)x+D_i(t)\bar{\mathbbm u}(t,x)+\sigma_i(t)\rangle.
\end{aligned}
\end{equation}
By \eqref{4:inf}, we rewrite $F^{t,x,0}(t)$ as follows:
\begin{equation}\label{th:HJBe:pf2}
F^{t,x,0}(t)=(\mathcal LV)(t,x)+H(t,x,\mathrm D_xV(t,x),\mathrm D^2_{xx}V(t,x)).
\end{equation}
By the boundedness of $\mathrm D^2_{xx}V$, the linear growth of $\bar{\mathbbm u}(t,x)$ and $\mathrm D_xV$, there is a constant $K>0$ which is independent of $t$ and $h$ such that
\begin{equation}\label{th:HJBe:pf3}
\mathbb E[\vert F^{t,x,h}(s)\vert]\leq K\mathbb E[1+\vert \bar X^{t,x}_s\vert^2+\vert \bar u^{t,x}_s\vert^2]\leq K(1+\vert x\vert^2).
\end{equation}
By Lebesgue's dominated convergence theorem, 
$$
\lim_{\substack{(h,s)\to(0,t)\\ t\leq s\leq t+h}}\mathbb E[F^{t,x,h}(s)]=F^{t,x,0}(t).
$$ 
Then, by Lebesgue's differential theorem, we obtain that $V$ is right differentiable with respect to $t$ by letting $h\to0$ in \eqref{th:HJBe:pf1}. 

For the left differentiability, we consider 
\begin{equation}
\frac{1}{h}[V(t,x)-V(t-h,x)]=-\frac{1}{h}\int_{t-h}^{t}\mathbb E[F^{t-h,x,h}(s)]\, \mathrm d s.
\end{equation}
Similarly, noting that \eqref{th:HJBe:pf3} still holds, we only need to prove 
$$
\lim_{\substack{(h,s)\to(0,t)\\t-h\leq s\leq t}}F^{t-h,x,h}(s)=F^{t,x,0}(t).
$$ 
By the estimate \eqref{prop:est:2}, we obtain (recall that $\bar X^{t,x}_s=x$ when $0\leq s\leq t$)
\begin{equation}
\mathbb E\left[\sup_{t-h\leq s\leq t}\vert \bar X^{t-h,x}_s-x\vert^2\right]\leq\mathbb E\left[\sup_{0\leq s\leq T}\vert \bar X^{t-h,x}_s-\bar X^{t,x}_s\vert^2\right]\leq K(1+\vert x\vert^2)h,
\end{equation}
i.e., $\bar X^{t-h,x}_s\to x$ as $h\to0$ in $L^2$ sense. 
Then, according to the continuity of $l,\bar{\mathbbm u},\mathrm D_xV$ and $\mathrm D^2_{xx}V$, we conclude that $F^{t-h,x,h}(s)$ converges to $F^{t,x,0}(t)$ in probability, which shows the left differentiability with respect to $t$ of $V$. 
By \eqref{th:HJBe:pf2}, $V$ satisfies the HJB equation \eqref{1:HJB}.
\end{proof}
Now, we turn to the uniqueness of the classical solution to the HJB equation \eqref{1:HJB}.
The stochastic verification theorem is a powerful tool to prove the uniqueness, which involves the optimal feedback controls.
However, by Lemma \ref{lem:pre} and Proposition \ref{prop:cv}, we are not able to obtain that $\bar{\mathbbm u}$ is Lipschitz continuous (even locally Lipschitz continuous) in $x$ due to the existence of $\mathrm D^2_{xx}V$.
Thus, when employing the feedback control $\bar{\mathbbm u}$, the controlled SDE \eqref{1:SDEt} may be ill-posed in the strong formulation, which means that the filtered probability space $(\Omega, \mathcal F, \mathbb F, \mathbb P)$ and the Brownian motion $W$ are fixed.

To overcome this difficulty, we would like to consider the weak formulation of the optimal control problem, which is defined as follows
(one could refer to Yong and Zhou \cite[Chapter 2, Definition 4.2 and Chapter 5, Definition 6.1]{yz99} for more details):

\begin{definition}
  A 6-tuple $\pi=(\tilde \Omega,\tilde{\mathcal F},\tilde{\mathbb F},\tilde{\mathbb P},\tilde W,\tilde u)$ is called a weak admissible control, and $(\tilde X^{t,x,\tilde u},\tilde u)$ a weak admissible pair, if
\begin{enumerate}[(i)]
  \item $(\tilde \Omega,\tilde{\mathcal F},\tilde{\mathbb F},\tilde{\mathbb P})$ is a complete filtered probability space satisfying the usual condition, where $\tilde{\mathbb F}=(\tilde{\mathcal F}_t)_{0\leq t\leq T}$.
  \item $\tilde W$ is a $d$-dimensional $\tilde{\mathbb F}$-Brownian motion defined on $(\tilde \Omega,\tilde{\mathcal F},\tilde{\mathbb F},\tilde{\mathbb P})$. 
  \item $u$ is an $\tilde{\mathbb F}$-adapted process taking values in $\mathbb R^m$ such that $\tilde{\mathbb E}\int_t^T\vert u_s\vert^2\mathrm ds<\infty$, where $\tilde{\mathbb E}$ is the expectation defined on the probability space $(\tilde \Omega,\tilde{\mathcal F},\tilde{\mathbb F},\tilde{\mathbb P})$.
  \item $\tilde X^{t,x,\tilde u}$ is an $\tilde{\mathbb F}$-adapted process satisfying the SDE \eqref{1:SDEt} defined on $(\tilde \Omega,\tilde{\mathcal F},\tilde{\mathbb F},\tilde{\mathbb P})$ with Brownian motion $\tilde W$ and control $\tilde u$.
  \item The cost functional $\tilde J(t,x;\pi)$ defined as
  \begin{equation}
  \tilde J(t,x;\pi)=\tilde{\mathbb E}\left[g(\tilde X^{t,x,\tilde u}_T)+\int_t^T l(s,\tilde X^{t,x,\tilde u}_s,\tilde u_s)\, \mathrm ds\right],
  \end{equation}
  exists and is finite, i.e., $|\tilde J(t,x;\pi)|<\infty$. 
\end{enumerate}
\end{definition}
All of the weak admissible controls form the weak admissible control set $\mathscr U^{\mathrm w}_{\mathrm{ad}}[t,T]$.
Based on the above definition, we further define the admissible and optimal feedback control in the weak formulation.
\begin{definition}\label{def:fbc}
  A measurable function $\mathbbm u:[0,T]\times \mathbb R^n\to \mathbb R^m$ is called a weak admissible feedback control, 
  if for any $(t,x)\in[0,T)\times\mathbb R^n$, there is a weak solution $(\tilde \Omega,\tilde{\mathcal F},\tilde{\mathbb F},\tilde{\mathbb P},\tilde W,\tilde X^{t,x,\mathbbm u})$ to the following closed-loop system:
 \begin{equation}\label{def:fbc:1}
\left\{
\begin{aligned}
\mathrm d\tilde X_s^{t,x,\mathbbm u}=& \big[A(s)\tilde X_s^{t,x,\mathbbm u}+B(s)\mathbbm u(s,\tilde X_s^{t,x,\mathbbm u})+b(s)\big]\, \mathrm ds \\
&+\sum_{i=1}^d \big[C_i(s)\tilde X_s^{t,x,\mathbbm u} +D_i(s)\mathbbm u(s,\tilde X_s^{t,x,\mathbbm u})+\sigma_i(s)\big]\, \mathrm dW^i_s,\quad s\in [t,T],\\
 \tilde X_t^{t,x,\mathbbm u} =&x,
\end{aligned}
\right.
\end{equation} 
such that $(\tilde \Omega,\tilde{\mathcal F},\tilde{\mathbb F},\tilde{\mathbb P},\tilde W,\mathbbm u(\cdot,\tilde{X}^{t,x}_{\cdot}))\in\mathscr U_{\mathrm{ad}}^{\mathrm w}[t,T]$.\\
A weak admissible feedback control $\bar{\mathbbm u}$ is called optimal, if for any $(t,x)\in[0,T)\times\mathbb R^n$, 
the weak admissible control $\bar {\pi}=(\tilde \Omega,\tilde{\mathcal F},\tilde{\mathbb F},\tilde{\mathbb P},\tilde W,\bar{\mathbbm u}(\cdot,\tilde{X}^{t,x,\bar{\mathbbm u}}_{\cdot}))$ is an optimal one in the weak formulation, i.e.,
  \begin{equation}
  \tilde J(t,x;\bar{\pi})=\inf_{\pi\in\mathscr U_{\mathrm {ad}}^{\mathrm w}[t,T]}\tilde J(t,x;\pi).
  \end{equation}
\end{definition}

Using the weak formulation, we now derive the following stochastic verification theorem. 
\begin{theorem}\label{th:HJBu}
  Let Assumptions \ref{A3} and \ref{A4} hold. 
  Suppose $V$ is a classical solution of the HJB equation \eqref{1:HJB} satisfying the regular condition \eqref{1:pov} and that $\mathrm D^2_{xx}V$ is uniformly bounded.
  Then $\bar{\mathbbm u}$ defined as \eqref{prop:cv:2} is an optimal feedback control for Problem (LC)$_{(t,x)}$.
  Moreover,
\begin{equation}\label{th:HJBu:1}
  V(t,x)=\inf_{\pi\in\mathscr U_{\mathrm {ad}}^{\mathrm w}[t,T]} \tilde J(t,x;\pi).
\end{equation}
\end{theorem}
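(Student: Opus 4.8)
The plan is to prove a stochastic verification theorem in the weak formulation, following the classical two-sided scheme: first show that $V(t,x)$ is a lower bound for $\tilde J(t,x;\pi)$ over all weak admissible $\pi$, and then exhibit a weak admissible realization of the feedback $\bar{\mathbbm u}$ along which this bound is attained. The algebraic engine is the identity, valid for every $u\in\mathbb R^m$,
\begin{equation*}
(\mathcal L V)(s,x)+\mathcal H(s,x,\mathrm D_xV(s,x),\mathrm D^2_{xx}V(s,x),u)=\mathcal A^u V(s,x)+l(s,x,u),
\end{equation*}
where $\mathcal A^u$ denotes the generator of the controlled SDE \eqref{1:SDEt} with frozen control value $u$, obtained by expanding the quadratic term in $\mathcal H$. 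Combined with the definition $H=\inf_u\mathcal H$ and the HJB equation \eqref{1:HJB}, this yields $\partial_t V+\mathcal A^u V+l\ge 0$ for all $u$, with equality precisely when $u=\bar{\mathbbm u}(s,x)$; the latter follows from \eqref{4:inf} together with the uniform convexity of $u\mapsto\mathcal H$ guaranteed by the regular condition \eqref{1:pov} via Lemma \ref{lem:pre}.

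For the lower bound I would fix any weak admissible control $\pi=(\tilde\Omega,\tilde{\mathcal F},\tilde{\mathbb F},\tilde{\mathbb P},\tilde W,\tilde u)$ with state $\tilde X^{t,x,\tilde u}$ and apply It\^o's formula to $V(s,\tilde X^{t,x,\tilde u}_s)$ on $[t,T]$, which is licit since $V\in C^{1,2}$. To control the local martingale I would localize along $\tau_n=\inf\{s\ge t:|\tilde X^{t,x,\tilde u}_s|>n\}\wedge T$, on which $\mathrm D_xV(s,\tilde X_s)$ is bounded so that the stochastic integral is a true martingale; taking expectations and adding $\tilde{\mathbb E}\int_t^{\tau_n}l\,\mathrm ds$ gives, via the identity above,
\begin{equation*}
\tilde{\mathbb E}\Big[V(\tau_n,\tilde X_{\tau_n})+\int_t^{\tau_n}l(s,\tilde X_s,\tilde u_s)\,\mathrm ds\Big]=V(t,x)+\tilde{\mathbb E}\int_t^{\tau_n}\big[\partial_tV+\mathcal A^{\tilde u_s}V+l\big]\,\mathrm ds\ge V(t,x).
\end{equation*}
Letting $n\to\infty$, using $\tau_n\uparrow T$ and the (at most) quadratic growth of $V$ and $l$ against the bound $\tilde{\mathbb E}\sup_s|\tilde X_s|^2<\infty$ to justify dominated convergence, and recalling $V(T,\cdot)=g$, I obtain $\tilde J(t,x;\pi)\ge V(t,x)$, hence $\inf_{\pi}\tilde J(t,x;\pi)\ge V(t,x)$.

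To attain equality I must produce a weak admissible feedback realization of $\bar{\mathbbm u}$. Since $\mathrm D^2_{xx}V$ is bounded and $\mathrm D_xV$ therefore of linear growth, Lemma \ref{lem:pre} and Proposition \ref{prop:cv} give that $\bar{\mathbbm u}$ is jointly continuous in $(s,x)$ and of linear growth; consequently the drift $A(s)x+B(s)\bar{\mathbbm u}(s,x)+b(s)$ and diffusion coefficients $C_i(s)x+D_i(s)\bar{\mathbbm u}(s,x)+\sigma_i(s)$ of the closed-loop system \eqref{def:fbc:1} are jointly continuous and of linear growth in $x$. Skorokhod's weak existence theorem then yields a weak solution $(\tilde\Omega,\tilde{\mathcal F},\tilde{\mathbb F},\tilde{\mathbb P},\tilde W,\tilde X^{t,x,\bar{\mathbbm u}})$; standard a priori estimates give $\tilde{\mathbb E}\sup_s|\tilde X^{t,x,\bar{\mathbbm u}}_s|^2<\infty$, and the linear growth of $\bar{\mathbbm u}$ gives $\tilde{\mathbb E}\int_t^T|\bar{\mathbbm u}(s,\tilde X_s)|^2\,\mathrm ds<\infty$, so that $\bar\pi=(\tilde\Omega,\tilde{\mathcal F},\tilde{\mathbb F},\tilde{\mathbb P},\tilde W,\bar{\mathbbm u}(\cdot,\tilde X_\cdot))\in\mathscr U^{\mathrm w}_{\mathrm{ad}}[t,T]$, showing $\bar{\mathbbm u}$ is a weak admissible feedback control. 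Repeating the It\^o computation along $\bar\pi$, the integrand $\partial_tV+\mathcal A^{\bar{\mathbbm u}}V+l$ vanishes identically, whence $\tilde J(t,x;\bar\pi)=V(t,x)$; together with the lower bound this proves \eqref{th:HJBu:1} and the optimality of $\bar{\mathbbm u}$.

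I expect the genuine difficulty to lie in the attainment step: because $\bar{\mathbbm u}$ inherits only the continuity, not the Lipschitz continuity, of $\mathrm D^2_{xx}V$, the closed-loop SDE need not be strongly well posed, which is exactly why the weak formulation is indispensable here. The key technical input is therefore Skorokhod's weak existence theorem for merely continuous, linear-growth (and possibly degenerate) coefficients. The localization and uniform-integrability bookkeeping in the lower-bound step is routine but must be handled with care, since $\tilde u$ is only square integrable and $V$ has merely quadratic growth.
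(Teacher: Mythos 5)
Your proposal is correct and follows essentially the same route as the paper: a verification argument applying It\^o's formula to $V(s,\tilde X_s)$ for an arbitrary weak admissible $\pi$, rewriting the integrand via the HJB equation as $\mathcal H-H\ge 0$ with equality at $\bar{\mathbbm u}$ by \eqref{4:inf}, and obtaining weak existence for the closed-loop system from the continuity and linear growth of $\bar{\mathbbm u}$ (the paper cites Pardoux and R\u a\c scanu where you invoke Skorokhod, which is the same underlying result). Your localization with $\tau_n$ and the dominated-convergence bookkeeping are a careful refinement of an integrability step the paper's proof leaves implicit, but they do not change the argument.
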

\begin{proof}
  By Lemma \ref{lem:pre}, we have $\bar{\mathbbm{u}}$ is continuous in $x$ and linear growth in $x$. Then, by the SDE theory (see Pardoux and R\u a\c scanu \cite[Chapter 3, Theorem 3.54]{pr14} for example), the SDE \eqref{def:fbc:1} has a weak solution. Since $\bar{\mathbbm{u}}$ is linear growth in $x$, $\bar{\pi}$ is a weak admissible control.
  
  Now, for any weak admissible control $\pi\in\mathscr U_{\mathrm{ad}}^{\mathrm w}[t,T]$, applying It\^o's formula to $V(s,\tilde{X}^{t,x,\tilde{u}}_s)$ on $[t,T]$, we obtain
\begin{equation}\label{th:HJBu:pf1}
\begin{aligned}
  &\tilde{\mathbb E}[g(\tilde{X}^{t,x,\tilde{u}}_T)]-V(t,x)\\
  =&\tilde{\mathbb E}\int_t^T[\frac{\partial V}{\partial t}(s,\tilde{X}^{t,x,\tilde{u}}_s)+\langle\mathrm D_xV(s,\tilde{X}^{t,x,\tilde{u}}_s),A(s)\tilde{X}^{t,x,\tilde{u}}_s+B(s)\tilde u_s+b(s)\rangle\\
  &+\frac{1}{2}\sum_{i=1}^d \langle \mathrm D^2_{xx}V(s,\tilde{X}^{t,x,\tilde{u}}_s)[C_i(s)\tilde{X}^{t,x,\tilde{u}}_s+D_i(s)\tilde u_s+\sigma_i(s)],\\
  &\qquad C_i(s)\tilde{X}^{t,x,\tilde{u}}_s+D_i(s)\tilde u_s+\sigma_i(s)\rangle]\, \mathrm d s.
\end{aligned}
\end{equation} 
Since $V$ is a classical solution to the HJB equation \eqref{1:HJB}, the above equality can be rewritten as follows when compared with $\tilde J(t,x;\pi)$:
\begin{equation}\label{th:HJBu:pf2}
\begin{aligned}
  &\tilde J(t,x;\pi)-V(t,x)\\
 =&\tilde{\mathbb E}\int_t^T[\mathcal{H}(s,\tilde{X}^{t,x,\tilde{u}}_s,\mathrm D_{x}V(s,\tilde{X}^{t,x,\tilde{u}}_s),\mathrm D^2_{xx}V(s,\tilde{X}^{t,x,\tilde{u}}_s),\tilde{u}_s)\\
   &-{H}(s,\tilde{X}^{t,x,\tilde{u}}_s,\mathrm D_{x}V(s,\tilde{X}^{t,x,\tilde{u}}_s),\mathrm D^2_{xx}V(s,\tilde{X}^{t,x,\tilde{u}}_s))]\, \mathrm d s.
\end{aligned}
\end{equation}
By the definitions of $H$ and $\mathcal H$, we deduce $\tilde J(t,x;\pi)\geq V(t,x)$ for any weak admissible control $\pi$. 
Moreover, by the definition of $\bar{\mathbbm u}$, \eqref{th:HJBu:pf2} demonstrates $ \tilde J(t,x;\bar{\pi})=V(t,x)$. 
As a result, $\bar{\mathbbm u}$ is an optimal feedback control and \eqref{th:HJBu:1} holds.
\end{proof}

As a corollary of Theorem \ref{th:HJBu}, the uniqueness result of the HJB equation \eqref{1:HJB} is presented below.

\begin{corollary}\label{cor:HJBu}
  Under Assumptions \ref{A3} and \ref{A4}, the HJB equation \eqref{1:HJB} admits at most one classical solution satisfying the regular condition \eqref{1:pov} and that $\mathrm D^2_{xx}V$ is uniformly bounded.
Particularly, if replacing Assumption \ref{A4} by Case 1 (see Page \pageref{case}), the HJB equation \eqref{1:HJB} admits at most one classical solution which is convex in $x$ for any $t\in[0,T]$ and that $\mathrm D^2_{xx}V$ is uniformly bounded.
\end{corollary}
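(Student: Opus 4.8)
The plan is to deduce uniqueness directly from the stochastic verification theorem (Theorem \ref{th:HJBu}), whose conclusion \eqref{th:HJBu:1} identifies any admissible classical solution with the weak value function. The essential observation is that the right-hand side of \eqref{th:HJBu:1}, namely $\inf_{\pi\in\mathscr U_{\mathrm{ad}}^{\mathrm w}[t,T]}\tilde J(t,x;\pi)$, depends only on the problem data $A,B,C,D,b,\sigma,g,l$ and not on the particular solution $V$ used to construct the feedback $\bar{\mathbbm u}$ in \eqref{prop:cv:2}. Hence two solutions with the required properties must coincide with one and the same function.

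For the main statement, suppose $V_1$ and $V_2$ are two classical solutions of \eqref{1:HJB}, each satisfying the regular condition \eqref{1:pov} and with uniformly bounded $\mathrm D^2_{xx}V_i$. Under Assumptions \ref{A3} and \ref{A4} each $V_i$ fulfils the hypotheses of Theorem \ref{th:HJBu}, so applying that theorem to $V_i$ yields
\[
V_i(t,x)=\inf_{\pi\in\mathscr U_{\mathrm{ad}}^{\mathrm w}[t,T]}\tilde J(t,x;\pi),\qquad (t,x)\in[0,T]\times\mathbb R^n,\ i=1,2.
\]
Since the right-hand side is the same for $i=1,2$, I would conclude $V_1\equiv V_2$, which is the asserted uniqueness.

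For the particular case, I would first note that Case 1 (see Page \pageref{case}) is a sufficient condition for the uniform convexity of $J$, so that Assumption \ref{A4} continues to hold and Theorem \ref{th:HJBu} remains applicable. It then suffices to verify that a classical solution $V$ which is convex in $x$ for every $t$ and has bounded $\mathrm D^2_{xx}V$ automatically satisfies the regular condition \eqref{1:pov}: convexity gives $\mathrm D^2_{xx}V(t,x)\geq 0$, so each $D_i(t)^\top\mathrm D^2_{xx}V(t,x)D_i(t)$ is positive semidefinite, while Case 1 provides $\mathrm D^2_{uu}l(t,x,u)\geq\delta I_m$; adding these yields \eqref{1:pov}. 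This is exactly the reasoning recorded in Remark \ref{rmk:rc}. With \eqref{1:pov} established, the argument of the previous paragraph applies verbatim and delivers uniqueness in this setting as well.

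The main point to get right is conceptual rather than computational: one must recognise that although the feedback $\bar{\mathbbm u}$ built from $V$ in \eqref{prop:cv:2} is solution-dependent, the verification theorem nonetheless equates $V$ with the infimum of $\tilde J$ over the entire weak admissible class, a quantity fixed by the data alone. The only genuine verification required is checking that the hypotheses of Theorem \ref{th:HJBu}---in particular the regular condition \eqref{1:pov}---hold for the candidate solutions; in the convex variant this is precisely where the convexity assumption and Case 1 are used.
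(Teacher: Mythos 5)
Your proposal is correct and follows essentially the same route as the paper: both deduce uniqueness by applying Theorem \ref{th:HJBu} to any candidate solution, which identifies it with the data-determined quantity $\inf_{\pi\in\mathscr U_{\mathrm{ad}}^{\mathrm w}[t,T]}\tilde J(t,x;\pi)$ via \eqref{th:HJBu:1}, and both handle the Case 1 variant by observing (as in Remark \ref{rmk:rc}) that convexity of $V$ in $x$ together with $\mathrm D^2_{uu}l\geq\delta I_m$ yields the regular condition \eqref{1:pov}. Your explicit remark that the feedback $\bar{\mathbbm u}$ is solution-dependent while the verification identity is not merely makes precise the step the paper leaves implicit.
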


\begin{proof}
By Theorem \ref{th:HJBu}, all of the classical solutions to the HJB equation \eqref{1:HJB} is represented by \eqref{th:HJBu:1}, showing the uniqueness.
In Case 1, the regular condition \eqref{1:pov} is satisfied when the classical solution is convex in $x$ for any $t\in[0,T]$ (see Remark \ref{rmk:rc}).
The proof is completed.
\end{proof}
It is clear that the uniqueness of the viscosity solution to the HJB equation \eqref{1:HJB} implies the uniqueness of the classical solution.
To the best of our knowledge, in the literature, the uniqueness results for viscosity solutions of HJB equations arising from stochastic optimal control and stochastic differential game problems usually need one or more of the following assumptions:
(i) The functions $g$ and $l$ in the cost functional are linear growth in $x$.
(ii) The control domain $U\in \mathbb R^m$ is compact.
(iii) The function $l$ is bounded in $u$.
See Peng \cite{ypfw97}, Yong and Zhou \cite{yz99}, Buckdahn and Li \cite{bl08} and Wu and Yu \cite{wy08} for example.
However, the assumptions of Corollary \ref{cor:HJBu} do not require any of the above ones.

Combining Theorem \ref{th:HJBe} and Corollary \ref{cor:HJBu}, we conclude that the value function $V$ is the unique classical solution to the HJB equation \eqref{1:HJB} satisfying the regular condition \eqref{1:pov} and that $\mathrm D^2_{xx}V$ is uniformly bounded.
Moreover, by the uniqueness result, we derive that, for any $(t,x)\in[0,T]\times \mathbb R^n$,
\begin{equation}
\inf_{u\in\mathscr U_{\mathrm {ad}}[t,T]} J(t,x;u)=  V(t,x)=\inf_{\pi\in\mathscr U_{\mathrm {ad}}^{\mathrm w}[t,T]} \tilde J(t,x;\pi).
\end{equation}
In other words, the value functions in the strong formulation and the weak formulation are the same.

At the end of this section, we prove the existence and uniqueness result of the closed-loop system (see \eqref{def:fbc:1} in the weak formulation), 
which illustrates that the weak optimal feedback control $\bar {\mathbbm u}$ is also optimal in the strong formulation.
\begin{theorem}
  Under Assumptions \ref{A3} and \ref{A4}, for any initial pair $(t,x)$, the following closed-loop system admits a unique strong solution $\bar X^{t,x}\in L^2_{\mathbb F}(\Omega;C(t,T;\mathbb R^n))$:
   \begin{equation}\label{th:cls:1}
\left\{
\begin{aligned}
\mathrm d\bar X^{t,x}_s=& \big[A(s)\bar X^{t,x}_s+B(s)\bar{\mathbbm u}(s,\bar X^{t,x}_s)+b(s)\big] \, \mathrm ds \\
&+\sum_{i=1}^d \big[C_i(s)\bar X^{t,x}_s +D_i(s)\bar{\mathbbm u}(s,\bar X^{t,x}_s)+\sigma_i(s)\big]\, \mathrm dW^i_s,\quad s\in [t,T],\\
 \bar X^{t,x}_t =&x.
\end{aligned}
\right.
\end{equation}
Moreover, $\bar u^{t,x}=\bar{\mathbbm u}(\cdot,\bar X^{t,x}_{\cdot})$ is the unique optimal control for Problem (LC)$_{(t,x)}$ in the strong formulation.
Consequently, $\bar{\mathbbm u}$ is the optimal feedback control in the strong formulation.
\end{theorem}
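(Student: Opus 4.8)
The plan is to obtain existence almost for free from the Hamiltonian-system theory already developed, and to prove uniqueness by trading the missing Lipschitz regularity of the feedback map $\bar{\mathbbm u}$ for the uniqueness of the optimal open-loop control.

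For existence, I would exhibit the open-loop optimal state $\bar X^{t,x}$ furnished by the Hamiltonian system \eqref{2:hst} as a strong solution of \eqref{th:cls:1}. By construction $\bar X^{t,x}$ solves the forward SDE of \eqref{2:hst} driven by $\bar u^{t,x}$, and Proposition \ref{prop:cv} supplies the feedback identity $\bar u^{t,x}_s=\bar{\mathbbm u}(s,\bar X^{t,x}_s)$ (equation \eqref{prop:cv:1}). Substituting this identity into that SDE turns it verbatim into the closed-loop system \eqref{th:cls:1}; membership in $L^2_{\mathbb F}(\Omega;C(t,T;\mathbb R^n))$ is inherited from the solution space $\mathcal M[t,T]$. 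Hence $\bar X^{t,x}$ is a strong solution.

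For uniqueness, let $X^{t,x}\in L^2_{\mathbb F}(\Omega;C(t,T;\mathbb R^n))$ be an arbitrary strong solution of \eqref{th:cls:1} and set $u_s:=\bar{\mathbbm u}(s,X^{t,x}_s)$. By the joint continuity and linear growth of $\bar{\mathbbm u}$ (Proposition \ref{prop:cv}) together with the $L^2$-bound on $X^{t,x}$, the process $u$ is $\mathbb F$-adapted, lies in $\mathscr U_{\mathrm{ad}}[t,T]$, and $X^{t,x}=X^{t,x,u}$. I would then repeat the verification computation of Theorem \ref{th:HJBu} on the present filtered probability space: since $V\in C^{1,2}$ (Theorem \ref{th:HJBe}), applying It\^o's formula to $V(s,X^{t,x}_s)$ on $[t,T]$ and using that $V$ solves \eqref{1:HJB} gives
\[
J(t,x;u)-V(t,x)=\mathbb E\int_t^T\big[\mathcal H(s,X^{t,x}_s,\mathrm D_xV,\mathrm D^2_{xx}V,u_s)-H(s,X^{t,x}_s,\mathrm D_xV,\mathrm D^2_{xx}V)\big]\,\mathrm ds .
\]
Because $u_s=\bar{\mathbbm u}(s,X^{t,x}_s)$ attains the infimum defining $H$ (identity \eqref{4:inf}), the integrand vanishes, so $J(t,x;u)=V(t,x)$ and $u$ is optimal. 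The uniqueness of the optimal control in Theorem \ref{th:wp} then forces $u=\bar u^{t,x}$ in $\mathscr U_{\mathrm{ad}}[t,T]$. Consequently $X^{t,x}$ and $\bar X^{t,x}$ solve the \emph{same} linear SDE with the now-fixed adapted control $\bar u^{t,x}$, and standard strong uniqueness for linear SDEs (the drift and diffusion are Lipschitz in the state, since $A,C_i$ are bounded) yields $X^{t,x}=\bar X^{t,x}$.

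The main obstacle is precisely the one flagged before the theorem: $\bar{\mathbbm u}$ inherits the dependence on $\mathrm D^2_{xx}V$ and is therefore not even locally Lipschitz in $x$, so neither existence nor uniqueness can be read off from classical SDE well-posedness. The device that circumvents this is to replace pathwise SDE uniqueness by the variational uniqueness of the minimizer coming from the uniform convexity of Assumption \ref{A4} (via Theorem \ref{th:wp}): every solution feeds back to an admissible control that is shown to be optimal, and optimality is unique. Putting the two parts together, $\bar u^{t,x}=\bar{\mathbbm u}(\cdot,\bar X^{t,x}_\cdot)$ is the unique optimal control for Problem (LC)$_{(t,x)}$ in the strong formulation, so $\bar{\mathbbm u}$ is the optimal feedback control in the strong formulation, as claimed.
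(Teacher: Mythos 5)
Your proposal is correct and follows essentially the same route as the paper's own proof: existence by reading the feedback identity \eqref{prop:cv:1} of Proposition \ref{prop:cv} into the forward SDE of the Hamiltonian system, and uniqueness by feeding any strong solution back into $\bar{\mathbbm u}$, running the verification computation of Theorem \ref{th:HJBu} on $V(s,X^{t,x}_s)$ to show the induced control is optimal, invoking the uniqueness of the optimal control from Theorem \ref{th:wp}, and finishing with the standard $L^2$ estimate for the resulting linear SDE. Your explicit appeal to \eqref{4:inf} to kill the integrand is a minor presentational difference from the paper, which cites \eqref{th:HJBu:pf1}--\eqref{th:HJBu:pf2} for the same step.
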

\begin{proof}
  Firstly, by Proposition \ref{prop:cv}, the solution $\bar X^{t,x}$ to the Hamiltonian system \eqref{2:hslq} with initial pair $(t,x)$ satisfies the closed-loop system \eqref{th:cls:1}, leading to the existence.

  For the uniqueness, let $\mathring X^{t,x}\in L^2_{\mathbb F}(\Omega;C(t,T;\mathbb R^n))$ be another solution to the closed-loop system \eqref{th:cls:1}.
  Denote $\mathring{u}^{t,x}_s=\bar {\mathbbm u}(s,\mathring X^{t,x}_{s})$, for any $s\in[t,T]$.
  By the linear growth of $\bar {\mathbbm u}$, we have
  \[
  \mathbb E\int_t^T\vert\mathring{u}^{t,x}_s\vert^2 \, \mathrm ds <\infty.
  \]
  Similarly to \eqref{th:HJBu:pf1} and \eqref{th:HJBu:pf2}, we apply It\^o's formula to $V(s,\mathring X^{t,x}_s)$ on $[t,T]$ and compare with $J(t,x;\mathring u^{t,x})$, leading to the following equality:
  \[
  V(t,x)=J(t,x;\mathring{u}^{t,x}).
  \]
  Therefore, $\mathring{u}^{t,x}$ is the optimal control for Problem (LC)$_{(t,x)}$.
  By the uniqueness of the optimal control, we obtain $\mathring{u}^{t,x}=\bar u^{t,x}$ in the space $\mathscr U_{\mathrm {ad}}[t,T]$.
  Applying the standard estimate for SDEs, we derive that
  $$
\mathbb E \left[\sup_{t\leq s\leq T}\vert \mathring X^{t,x}_s-\bar X^{t,x}_s\vert^2\right]\leq K\mathbb E\int_t^T\vert\mathring u^{t,x}_s-\bar u^{t,x}_s\vert^2 \,\mathrm ds=0.
  $$
Consequently, $\mathring X^{t,x}=\bar X^{t,x}$ in the space $L^2_{\mathbb F}(\Omega;C(t,T;\mathbb R^n))$.
  The proof is completed.
\end{proof}

\begin{appendix}
  \section{Proof of Lemma \ref{lem:md}}\label{APP-A}
Recall the contractive mapping $\mathcal T$ defined in the proof of Theorem \ref{th:wp}. 
We would like to prove that $\mathcal T$ maps $\mathbb L^a_{1,2}(0,T;\mathbb R^m)$ into itself and the corresponding iterative sequence also converges under the norm $\Vert\cdot\Vert_{\mathbb L^a_{1,2}}$.

Firstly, for any $u\in\mathbb L^a_{1,2}(0,T;\mathbb R^m)$, we have $(X^{t,x,u},Y^{t,x,u},Z^{t,x,u})\in\mathbb L^a_{1,2}(0,T;\mathbb R^{n+n+nd})$ by the result of El Karoui et al. \cite[Section 5.2]{kpq97}. 
As a result, 
$$\mathcal T[u]\in\mathbb L^a_{1,2}(0,T;\mathbb R^m).
$$

Next, we prove that the iterative sequence $\{u^k\}_{k\geq0}$ converges in $\mathbb L^a_{1,2}(0,T;\mathbb R^m)$.
We set $u^0\equiv0$ and $u^{k+1}=\mathcal T[u^k]$ when $k\geq0$. 
By Theorem \ref{th:wp}, the sequence $\{u^k\}_{k\geq0}$ is a Cauchy sequence in $\mathscr U_{\mathrm{ad}}[t,T]$. 
We denote $(X^k,Y^k,Z^k):=(X^{t,x,u^k},Y^{t,x,u^k},Z^{t,x,u^k})$ for convenience. 
Therefore, $(X^k,Y^k,Z^k,u^k)\in\mathbb L^a_{1,2}(0,T;\mathbb R^{n+n+nd+m})$ recursively.
Then, for any $i=1,...,d$, a version of $(\mathbf D_{\theta}^iX^k,\mathbf D_{\theta}^iY^k,\mathbf D_{\theta}^iZ^k)$ is given by
\begin{equation}\label{app:lh1}
\left\{
\begin{aligned}
\mathbf D^i_\theta X^k_s = &C_i(\theta) X^k_{\theta}+D_i(\theta) u^k_{\theta}+\sigma_i(\theta)+\int_{\theta}^s\big[A(r)\mathbf D^i_\theta X^k_r+B(r)\mathbf D^i_\theta u^k_r\big]\, \mathrm dr\\
&+\sum_{j=1}^d\int_{\theta}^s \big[C_j(r)\mathbf D^i_\theta X^k_r+D_j(r)\mathbf D^i_\theta u^k_r\big]\, \mathrm dW^j_r,\\
\mathbf D^i_\theta Y^k_s = &\mathrm D^2_{xx}g(X^k_T)\mathbf D^i_\theta X^k_T+\int_s^T\Big[A(r)^\top \mathbf D^i_\theta Y^k_r +C(r)^\top \mathbf D^i_\theta Z^k_r\\
&+\mathrm D^2_{xx}l(r,X_r^k,u^k_r)\mathbf D^i_\theta X^k_r+\mathrm D^2_{xu}l(r,X_r^k,u^k_r)\mathbf D^i_\theta u_r^k\Big]\, \mathrm dr\\
&-\sum_{j=1}^{d}\int^T_s\mathbf D^i_\theta Z^{k,j}_r\, \mathrm dW^j_r,
\end{aligned}
\right.
\end{equation} 
when $s\in[\theta,T]$ and $(\mathbf D^i_\theta X^k_s,\mathbf D^i_\theta Y^k_s,\mathbf D^i_\theta Z^k_s,\mathbf D^i_\theta u^k_s) =0$ when $s\in[0,\theta)$. By the chain rule, we have
$$
\begin{aligned}
\mathbf D^i_\theta u^{k+1}_s=&\mathbf D^i_\theta u^{k}_s-\eta\Big[B(s)^\top\mathbf D^i_\theta Y^{k}_s+D(s)^\top\mathbf D^i_\theta Z^{k}_s\\
&+\mathrm D^2_{uu}l(s,X^k_s,u^k_s)\mathbf D^i_\theta u^{k}_s+\mathrm D^2_{ux}l(s,X^k_s,u^k_s)\mathbf D^i_\theta X^{k}_s\Big].
\end{aligned}
$$

For any $i=1,...,d$, we further introduce the following Hamiltonian system, which admits a unique solution $(X^{\theta,i},Y^{\theta,i},Z^{\theta,i},u^{\theta,i})$ by Theorem \ref{th:wp} and Lemma \ref{lem:2oc}. 
(Recall that $\mathcal R^{t,x},\mathcal Q^{t,x},\mathcal S^{t,x}$ are defined in \eqref{3:not2})
\begin{equation}\label{app:lh2}
\left\{
\begin{aligned}
&\mathrm dX^{\theta,i}_s = \big[A(s)X^{\theta,i}_s+B(s)u^{\theta,i}_s\big]\, \mathrm ds+\sum_{j=1}^d \big[C_j(s)X^{\theta,i}_s+D_j(s)u^{\theta,i}_s\big]\, \mathrm dW^j_s,\\
&\mathrm d Y^{\theta,i}_s=-\bigg[A(s)^\top Y^{\theta,i}_s +C(s)^\top Z^{\theta,i}_s+\mathcal Q_s^{t,x}X^{\theta,i}_s+(\mathcal S_s^{t,x})^\top u^{\theta,i}_s\bigg]\, \mathrm ds+\sum_{j=1}^{d}Z^{\theta,i,j}_s\, \mathrm dW^j_s,\\
& \mathcal R_s^{t,x}u^{\theta,i}_s+\mathcal S_s^{t,x}X^{\theta,i}_s+B(s)^\top Y^{\theta,i}_s+D(s)^\top Z^{\theta,i}_s=0,~~~~s\in[\theta,T],\\
& X^{\theta,i}_{\theta}= C_i(\theta) \bar X^{t,x}_{\theta}+D_i(\theta) \bar u^{t,x}_{\theta}+\sigma_i(\theta),~~~~~~Y^{\theta,i}_T=\mathcal G^{t,x} X^{\theta,i}_T.
\end{aligned}
\right.
\end{equation}
For convenience, we denote $X^{\theta}=(X^{\theta,1},...,X^{\theta,d})$ and similarly for $Y^{\theta},Z^{\theta},u^{\theta}$. 
We would like to prove that $(\mathbf D_\theta X^k,\mathbf D_\theta Y^k,\mathbf D_\theta Z^k,\mathbf D_\theta u^k)$ converges to $(X^{\theta},Y^{\theta},Z^{\theta},u^{\theta})$ when $k\to \infty$.

Applying the estimate \eqref{th:wp:1}, we have
\begin{equation}
\mathbb E\bigg[\sup_{\theta\leq s \leq T}(\vert X^{\theta}_s\vert^2+\vert Y^{\theta}_s\vert^2)+\int_{\theta}^T(\vert Z^{\theta}_s\vert^2+\vert u^{\theta}_s\vert^2)\, \mathrm ds\bigg]\leq K\Big\{1+\mathbb E[\vert \bar X^{t,x}_{\theta}\vert^2+\vert \bar u^{t,x}_{\theta}\vert^2]\Big\}.
\end{equation}
Therefore, \eqref{prop:est:1} implies
\begin{equation}\label{app:est1}
\int_0^T\mathbb E\bigg[\sup_{\theta\leq s \leq T}(\vert X^{\theta}_s\vert^2+\vert Y^{\theta}_s\vert^2)+\int_{\theta}^T(\vert Z^{\theta}_s\vert^2+\vert u^{\theta}_s\vert^2)\, \mathrm ds\bigg]\, \mathrm d\theta\leq K(1+\vert x\vert^2).
\end{equation}

We also introduce the following decoupled FBSDE, which depends on $u^{\theta,i}$ and $(X^k,u^k)$:
\begin{equation}\label{a.5}
\left\{
\begin{aligned}
\mathrm dX^{k,\theta,i}_s =& \big[A(s)X^{k,\theta,i}_s+B(s)u^{\theta,i}_s\big]\, \mathrm ds+\sum_{j=1}^d \big[C_j(s)X^{k,\theta,i}_s+D_j(s)u^{\theta,i}_s\big]\, \mathrm dW^j_s,\\
\mathrm dY^{k,\theta,i}_s =& -\bigg[A(s)^\top Y^{k,\theta,i}_s +C(s)^\top Z^{k,\theta,i}_s+\mathrm D^2_{xx}l(s,X_s^k,u^k_s)X^{k,\theta,i}_s\\
&+\mathrm D^2_{xu}l(s,X_s^k,u^k_s) u^{\theta,i}_s\bigg]\, \mathrm ds+\sum_{j=1}^{d}Z^{k,\theta,i,j}_s\, \mathrm dW^j_s,~~~~s\in[\theta,T],\\
X^{k,\theta,i}_{\theta}=& C_i(\theta) X^{k}_{\theta}+D_i(\theta) u^{k}_{\theta}+\sigma_i(\theta),~~~~~~Y^{k,\theta,i}_T=\mathrm D^2_{xx}g(X^k_T) X^{k,\theta,i}_T.
\end{aligned}
\right.
\end{equation}
We also denote $X^{k,\theta}=(X^{k,\theta,1},...,X^{k,\theta,d})$, similarly for $Y^{k,\theta},Z^{k,\theta}$. 
The above FBSDE incorporates parameters from both the FBSDE \eqref{app:lh1} and Hamiltonian system \eqref{app:lh2}, thereby serving as a transitional bridge between them.

Applying the standard estimates for SDEs and BSDEs, there exists a constant $K>0$ independent of $k$ such that
\begin{equation}\label{app:est2}
  \begin{aligned}
&\mathbb E\bigg[\sup_{\theta\leq s\leq T}(\vert \mathbf D_{\theta}X^k_s-X^{k,\theta}_s\vert^2+\vert \mathbf D_{\theta}Y^k_s-Y^{k,\theta}_s\vert^2)+\int_\theta^T\vert \mathbf D_{\theta}Z^k_s-Z^{k,\theta}_s\vert^2\, \mathrm ds\bigg]\\
\leq& K\mathbb E\int_0^T\vert \mathbf D_{\theta}u^k_s-u^{\theta}_s\vert^2\, \mathrm ds,
\end{aligned}
\end{equation}
and 
\begin{equation}
\mathbb E\bigg[\sup_{\theta\leq s\leq T}(\vert X^{k,\theta}_s-X^{\theta}_s\vert^2+\vert Y^{k,\theta}_s-Y^{\theta}_s\vert^2)+\int_\theta^T\vert Z^{k,\theta}_s-Z^{\theta}_s\vert^2\, \mathrm ds\bigg]\leq K\mathcal I^{k,\theta}_1,
\end{equation}
where 
\begin{align*}
& \mathcal I_1^{k,\theta}=\mathbb E\bigg[\vert X^k_{\theta}-\bar X^{t,x}_{\theta}\vert^2+\vert u^k_{\theta}-\bar u^{t,x}_{\theta}\vert^2+\vert\mathrm D^2_{xx}g(X^k_T)-\mathrm D^2_{xx}g(\bar X^{t,x}_T)\vert^2\vert X^{\theta}_T\vert^2\\
& +\int_{\theta}^{T}\Big[(\vert\mathrm D^2_{xx}l(s,X^k_s,u^k_s)-\mathcal Q^{t,x}_s \vert^2+\vert\mathrm D^2_{ux}l(s,X^k_s,u^k_s)-\mathcal S^{t,x}_s\vert^2) \vert X^{\theta}_s\vert^2\\
&+(\vert\mathrm D^2_{xu}l(s,X^k_s,u^k_s)-(\mathcal S^{t,x}_s)^{\top}\vert^2+\vert\mathrm D^2_{uu}l(s,X^k_s,u^k_s)-\mathcal R^{t,x}_s\vert^2) \vert u^{\theta}_s\vert^2\Big]\, \mathrm d s\bigg].
\end{align*}
By the fact that $(X^k,u^k)$ converges to $(\bar X^{t,x},\bar u^{t,x})$ as $k\to \infty$, \eqref{app:est1} and Lebesgue's dominated convergence theorem,  we have
\begin{equation}\label{app:est4}
\lim_{k\to\infty} \int_0^T\mathcal I^{k,\theta}_1\, \mathrm d\theta =0.
\end{equation}

Moreover, applying Young's inequality and by \eqref{app:est2}, we deduce that, for any $\varepsilon>0$,
\begin{equation}
  \begin{aligned}
&\mathbb E\int_{\theta}^T\vert \mathbf D_{\theta}u^{k+1}_s-u^{\theta}_s\vert^2\, \mathrm ds\\
=&\mathbb E\int_{\theta}^T\vert \mathbf D_{\theta}u^{k}_s-u^{\theta}_s-\eta[B(s)^\top\mathbf D^i_\theta Y^{k}_s+D(s)^\top\mathbf D^i_\theta Z^{k}_s+\mathrm D^2_{uu}l(s,X^k_s,u^k_s)\mathbf D^i_\theta u^{k}_s\\
&+\mathrm D^2_{ux}l(s,X^k_s,u^k_s)\mathbf D^i_\theta X^{k}_s]\vert^2\, \mathrm ds\leq (1+\frac{1}{\varepsilon})\eta^2\mathcal I^{k,\theta}_2+(1+\varepsilon)\mathcal I^{k,\theta}_3,
\end{aligned}
\end{equation}
where
\begin{align*}
\mathcal I^{k,\theta}_2&=\mathbb E\int_{\theta}^{T}\vert B(s)^\top Y^{k,\theta}_s+D(s)^\top Z^{k,\theta}_s+\mathrm D^2_{ux}l(s,X^k_s,u^k_s) X^{k,\theta}_s+\mathrm D^2_{uu}l(s,X^k_s,u^k_s) u^{\theta}_s\vert^2\, \mathrm d s,\\
\mathcal I^{k,\theta}_3&=\mathbb E\int_{\theta}^{T}\vert \mathbf D_{\theta}u^{k}_s-u^{\theta}_s+B(s)^\top (\mathbf D_{\theta}Y^k_s-Y^{k,\theta}_s)+D(s)^\top (\mathbf D_{\theta}Z^k_s-Z^{k,\theta}_s)\\
&+\mathrm D^2_{ux}l(s,X^k_s,u^k_s) (\mathbf D_{\theta}X^k_s-X^{k,\theta}_s)+\mathrm D^2_{uu}l(s,X^k_s,u^k_s) (\mathbf D_{\theta}u^k_s-u^{\theta}_s)\vert^2\, \mathrm d s.
\end{align*}
Noting that $\mathcal R_s^{t,x}u^{\theta}_s+\mathcal S_s^{t,x}X^{\theta}_s+B(s)^\top Y^{\theta}_s+D(s)^\top Z^{\theta}_s=0$, we have
\begin{align*}
  \mathcal I^{k,\theta}_2=&\mathbb E\int_{\theta}^{T}\vert B(s)^\top (Y^{k,\theta}_s-Y^{\theta}_s)+D(s)^\top (Z^{k,\theta}_s-Z^{\theta}_s)+\mathrm D^2_{ux}l(s,X^k_s,u^k_s) (X^{k,\theta}_s-X^{\theta}_s)\\
  &+[\mathrm D^2_{ux}l(s,X^k_s,u^k_s)-\mathcal S^{t,x}_s]X^{\theta}_s+[\mathrm D^2_{uu}l(s,X^k_s,u^k_s)-\mathcal R^{t,x}_s] u^{\theta}_s\vert^2\, \mathrm d s\\
  \leq&K\mathcal I^{k,\theta}_1.
\end{align*}
Similarly to the proof of Theorem \ref{th:wp}, applying It\^o's formula to $\langle Y^{k,\theta}_s-\mathbf D_{\theta}Y^{k}_s,X^{k,\theta}_s-\mathbf D_{\theta}X^{k}_s \rangle$ on $[\theta,T]$ and by \eqref{a.5}, we deduce
\begin{equation}\label{app:est3}
  \mathcal I^{k,\theta}_3\leq (1-2\eta\delta+\eta^2K)\mathbb E\int_{\theta}^T\vert \mathbf D_{\theta}u^{k}_s-u^{\theta}_s\vert^2\, \mathrm ds.
\end{equation}
If necessary, we could choose a bigger $K$ in \eqref{pf:th:wp:2} and \eqref{app:est3} such that $\eta=\frac{\delta}{K}$ is the same one. Then, we obtain
$$
  \mathcal I^{k,\theta}_3\leq (1-\frac{\delta^2}{K})\mathbb E\int_{\theta}^T\vert \mathbf D_{\theta}u^{k}_s-u^{\theta}_s\vert^2\, \mathrm ds.
$$
Taking $\varepsilon$ small enough such that $\gamma=(1-\frac{\delta^2}{K})(1+\varepsilon)<1$, we derive 
\begin{equation}
\mathbb E\int_0^T\int_{0}^T\vert \mathbf D_{\theta}u^{k+1}_s-u^{\theta}_s\vert^2\, \mathrm ds\, \mathrm d\theta\leq   K\int_0^T\mathcal I^{k,\theta}_1\, \mathrm d\theta+\gamma\mathbb E\int_0^T\int_{0}^T\vert \mathbf D_{\theta}u^{k}_s-u^{\theta}_s\vert^2\, \mathrm ds\, \mathrm d\theta,
\end{equation}
where $K$ and $\gamma$ are constants independent of $k$. Combining \eqref{pf:th:wp:2} and \eqref{app:est4}, for any $\varepsilon>0$, there exists a sufficiently large integer $N>0$ such that, for any $k\geq N$,
\begin{align*}
&\mathbb E\int_0^T\vert u^{k+1}_s-\bar u^{t,x}_s\vert^2\, \mathrm ds+\mathbb E\int_0^T\int_{0}^T\vert \mathbf D_{\theta}u^{k+1}_s-u^{\theta}_s\vert^2\, \mathrm ds\, \mathrm d\theta\\
\leq &\varepsilon+\gamma\bigg[\mathbb E\int_0^T\vert u^{k}_s-\bar u^{t,x}_s\vert^2\, \mathrm ds+\mathbb E\int_0^T\int_{0}^T\vert \mathbf D_{\theta}u^{k}_s-u^{\theta}_s\vert^2\, \mathrm ds\, \mathrm d\theta\bigg]\\
\leq &\frac{\varepsilon}{1-\gamma}+\gamma^{k+1-N}\bigg[\mathbb E\int_0^T\vert u^{N}_s-\bar u^{t,x}_s\vert^2\, \mathrm ds+\mathbb E\int_0^T\int_{0}^T\vert \mathbf D_{\theta}u^{N}_s-u^{\theta}_s\vert^2\, \mathrm ds\, \mathrm d\theta\bigg].
\end{align*}
Thus, we deduce that $\{u^{k}\}_{k\geq0}$ is also a Cauchy sequence under the norm $\Vert\cdot\Vert_{\mathbb L^a_{1,2}}$. 
Since $\mathbb L^a_{1,2}(0,T;\mathbb R^m)$ is closed under the norm $\Vert\cdot\Vert_{\mathbb L^a_{1,2}}$, 
we conclude that the limit $\bar u^{t,x}\in\mathbb L^a_{1,2}(0,T;\mathbb R^m)$ and $(\bar X^{t,x},\bar Y^{t,x},\bar Z^{t,x})\in\mathbb L^a_{1,2}(0,T;\mathbb R^{n+n+nd})$ correspondingly (see \eqref{app:est2}).  
Since
$$
(\mathbf D_{\theta}X^{k},\mathbf D_{\theta}Y^{k},\mathbf D_{\theta}Z^{k},\mathbf D_{\theta}u^{k})\to (X^{\theta},Y^{\theta},Z^{\theta},u^{\theta}),
$$
as $k\to\infty$, \eqref{app:lh2} also provides a version for $(\mathbf D_{\theta}^i\bar X^{t,x},\mathbf D_{\theta}^i\bar Y^{t,x},\mathbf D^i_{\theta}\bar Z^{t,x},\mathbf D^i_{\theta}\bar u^{t,x})$. 

Finally, we prove that $\bar Z^{t,x}_s=\mathbf D_s\bar Y^{t,x}_s$.
Note that $\bar Y^{t,x}$ satisfies the following ``forward'' form:
 \begin{align*}
\bar Y^{t,x}_s=&\bar Y^{t,x}_t-\int_{t}^s\big[A(r)^\top \bar Y^{t,x}_r +C(r)^\top \bar Z^{t,x}_r+\mathrm D_xl(r,\bar X^{t,x}_r,\bar u^{t,x}_r)\big]\, \mathrm dr+\int_{t}^s\bar Z^{t,x,j}_r\, \mathrm dW_r^j.
\end{align*}
By El Karoui et al. \cite[Lemma 5.1]{kpq97}, we have, for any $i=1,...,d$,
 \begin{align*}
\mathbf D_{\theta}^i\bar Y^{t,x}_s=&\bar Z^{t,x,i}_{\theta}-\int_{\theta}^s[A(r)^\top \mathbf D^i_\theta \bar Y^{t,x}_r +C(r)^\top \mathbf D^i_\theta \bar Z^{t,x}_r+\mathcal Q_r^{t,x}\mathbf D^i_\theta \bar X^{t,x}_r+(\mathcal S_r^{t,x})^\top \mathbf D^i_\theta \bar u^{t,x}_r]\, \mathrm dr\\
&+\int_{\theta}^s\mathbf D^i_\theta \bar Z^{t,x,j}_r\, \mathrm dW_r^j.
\end{align*}
Taking $s=\theta$ yields $D_{\theta}\bar Y^{t,x}_{\theta}=\bar Z_{\theta}^{t,x}$. The proof is completed.
  \section{Proof of Lemma \ref{lem:dpp}}\label{APP-B} 
We first assume $\xi=\sum_{i=1}^{\infty}x_i\mathbbm 1_{A_i}$, where $x_i\in\mathbb R^n,~i\geq1$ are constants and $\{A_i\}_{i\geq 1}\subset\mathcal F_t$ is a partition of $\Omega$. 
Then, by Remark \ref{rmk} and $(X^{t,\xi},u^{t,\xi})=\sum_{i=1}^{\infty}(X^{t,x_i},u^{t,x_i})\mathbbm 1_{A_i}$, we obtain
\begin{align*}
    V(t,\xi)&=V(t,\sum_{i=1}^{\infty}x_i\mathbbm 1_{A_i})=\sum_{i=1}^{\infty}V(t,x_i)\mathbbm 1_{A_i}\\
    &=\sum_{i=1}^{\infty}\mathbb E\left[ g(\bar X^{t,x_i}_{T})+\int_t^{T}l(s,\bar X^{t,x_i}_s,\bar u^{t,x_i}_s)\, \mathrm d s\right]\mathbbm 1_{A_i}\\
    &=\sum_{i=1}^{\infty}\mathbb E\left[ g(\bar X^{t,x_i}_{T})+\int_t^{T}l(s,\bar X^{t,x_i}_s,\bar u^{t,x_i}_s)\, \mathrm d s\bigg\vert\mathcal F_t\right]\mathbbm 1_{A_i}\\
    &=\mathbb E\left[ \sum_{i=1}^{\infty}(g(\bar X^{t,x_i}_{T})\mathbbm 1_{A_i})+\int_t^{T}\sum_{i=1}^{\infty}(l(s,\bar X^{t,x_i}_s,\bar u^{t,x_i}_s)\mathbbm 1_{A_i})\, \mathrm d s\bigg\vert\mathcal F_t\right]\\
    &=\mathbb E\left[ g(\bar X^{t,\xi}_{T})+\int_t^{T}l(s,\bar X^{t,\xi}_s,\bar u^{t,\xi}_s)\, \mathrm d s\bigg\vert\mathcal F_t\right].
\end{align*}

In the general case where $\xi\in L^2_{\mathcal F_t}(\Omega;\mathbb R^n)$, we can find a sequence $\{\xi_k\}_{k\geq1}$ such that each $\xi_k$ takes countably many values and $\vert\xi_k-\xi\vert\leq\frac1k$. 
Based on the above analysis, we have for any $k\geq1$,
$$
V(t,\xi_k)=\mathbb E\left[ g(\bar X^{t,\xi_k}_{T})+\int_t^{T}l(s,\bar X^{t,\xi_k}_s,\bar u^{t,\xi_k}_s)\, \mathrm d s\bigg\vert\mathcal F_t\right].
$$
By Proposition \ref{prop:vfd}, we have $\mathrm D_xV$ is uniformly Lipschitz continuous in $x$. Then,
\begin{equation}
\begin{aligned}
   \mathbb E[\vert V(t,\xi_k)-V(t,\xi)\vert]&=\mathbb E\left[\left\vert\int_0^1\langle\mathrm D_xV(t,\xi+\gamma(\xi_k-\xi)),\xi_k-\xi\rangle\, \mathrm d\gamma\right\vert\right]\\
   &\leq \frac{K}{k}(1+\frac{1}{k}+\mathbb E[\vert\xi\vert])\to0,~~\mbox{ as } k\to\infty.
\end{aligned}
\end{equation}
Moreover,
\begin{align*}
  &\mathbb E\bigg[\bigg\vert V(t,\xi_k)-\mathbb E\left[ g(\bar X^{t,\xi}_{T})+\int_t^{T}l(s,\bar X^{t,\xi}_s,\bar u^{t,\xi}_s)\, \mathrm d s\bigg\vert\mathcal F_t\right]\bigg\vert\bigg]\\
  \leq&\mathbb E\left[\vert g(\bar X^{t,\xi_k}_{T})-g(\bar X^{t,\xi}_{T})\vert+\int_t^T\vert l(s,\bar X^{t,\xi_k}_s,\bar u^{t,\xi_k}_s)-l(s,\bar X^{t,\xi}_s,\bar u^{t,\xi}_s)\vert\, \mathrm d s\right]\\
  \leq&K\mathbb E\int_t^T\big[(1+\vert \bar X^{t,\xi}_{s}\vert+\vert\bar X^{t,\xi_k}_{s}\vert+\vert \bar u^{t,\xi}_{s}\vert+\vert\bar u^{t,\xi_k}_{s}\vert)(\vert\bar X^{t,\xi_k}_{s}-\bar X^{t,\xi}_{s}\vert+\vert\bar u^{t,\xi_k}_{s}-\bar u^{t,\xi}_{s}\vert)\big]\, \mathrm d s\\
  &+K \mathbb E\left[(1+\vert \bar X^{t,\xi}_{T}\vert+\vert\bar X^{t,\xi_k}_{T}\vert)\vert\bar X^{t,\xi_k}_{T}-\bar X^{t,\xi}_{T}\vert\right]\\
  \leq &K\left(1+\mathbb E\bigg[\sup_{t\leq s \leq T}\vert X^{t,\xi}_s\vert^2+\sup_{t\leq s \leq T}\vert X^{t,\xi_k}_s\vert^2\bigg]+\mathbb E\int_t^T\big[\vert u^{t,\xi}_s\vert^2\, +\vert u^{t,\xi_k}_s\vert^2\big]\, \mathrm ds\right)^{1/2}\\
  &\times\left(\mathbb E\bigg[\sup_{t\leq s \leq T}\vert X^{t,\xi_k}_s-X^{t,\xi}_s\vert^2\bigg]+\mathbb E\int_t^T\vert u^{t,\xi_k}_s-u^{t,\xi}_s\vert^2\, \mathrm ds\right)^{1/2}.
\end{align*}
By the estimate \eqref{2:est}, we have
$$
\mathbb E\left[\sup_{t\leq s \leq T}\vert \bar X^{t,\xi}_s\vert^2\right]+\mathbb E\int_t^T\vert \bar u^{t,\xi}_s\vert^2\, \mathrm ds\leq K(1+\mathbb E[\vert\xi\vert^2])
$$
and 
$$
\mathbb E\left[\sup_{t\leq s \leq T}\vert \bar X^{t,\xi_k}_s\vert^2\right]+\mathbb E\int_t^T\vert \bar u^{t,\xi_k}_s\vert^2\, \mathrm ds\leq K(1+\mathbb E[\vert\xi_k\vert^2])\leq K(1+\frac{1}{k^2}+\mathbb E[\vert\xi\vert^2]).
$$
Similarly to the proof of the estimate \eqref{prop:est:2} and by the estimate \eqref{2:est} again, we derive
\begin{equation}
  \mathbb E\left[\sup_{t\leq s \leq T}\vert \bar X^{t,\xi_k}_s-\bar X^{t,\xi}_s\vert^2\right]+\mathbb E\int_t^T\vert \bar u^{t,\xi_k}_s-\bar u^{t,\xi}_s\vert^2\, \mathrm ds\leq K\mathbb E[\vert \xi_k-\xi\vert^2]\leq \frac{K}{k^2}.
\end{equation}
Therefore, 
$$
\mathbb E\bigg[\bigg\vert V(t,\xi_k)-\mathbb E\left[ g(\bar X^{t,\xi}_{T})+\int_t^{T}l(s,\bar X^{t,\xi}_s,\bar u^{t,\xi}_s)\, \mathrm d s\bigg\vert\mathcal F_t\right]\bigg\vert\bigg]\to0, \mbox{ when } k\to\infty.
$$
By the uniqueness of the limit, we conclude the desired result.
\section{Convexity of value function}\label{APP-C}
For the convexity of the value function $V$, we provide the following sufficient condition.
\begin{lemma}\label{lem:covf}
  Suppose that Assumptions \ref{A3} and \ref{A4} hold. 
  Further assume that $g,l$ are convex in $(x,u)$ for almost all $(t,\omega)\in[0,T]\times\Omega$.
  Then, the value function $V$ is convex in $x$ for any $t\in[0,T]$.
\end{lemma}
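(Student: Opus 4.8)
The plan is to exploit the affine structure of the controlled state equation together with the assumed convexity of $g$ and $l$. First I would fix $t\in[0,T]$, two points $x_1,x_2\in\mathbb R^n$ and a weight $\lambda\in[0,1]$, and set $x^\lambda:=\lambda x_1+(1-\lambda)x_2$. By Theorem \ref{th:wp} (in the version for a general initial pair $(t,x)$), the optimal controls $\bar u^{t,x_1}$ and $\bar u^{t,x_2}$ exist, so I would introduce $u^\lambda:=\lambda\bar u^{t,x_1}+(1-\lambda)\bar u^{t,x_2}$, which lies in $\mathscr U_{\mathrm{ad}}[t,T]$ since $\mathscr U_{\mathrm{ad}}[t,T]=L^2_{\mathbb F}(t,T;\mathbb R^m)$ is a linear space.

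The key observation is that the SDE \eqref{1:SDEt} is affine in $(x,u)$, so its solution map is affine. Concretely, I would verify that the process $\lambda\bar X^{t,x_1}+(1-\lambda)\bar X^{t,x_2}$ takes the value $x^\lambda$ at time $t$ and solves the state equation driven by $u^\lambda$: the homogeneous terms combine through $A$, $B$, $C_i$, $D_i$, while the inhomogeneous terms $b$ and $\sigma_i$ reappear with total weight $\lambda+(1-\lambda)=1$. Invoking strong uniqueness of the solution (valid under Assumption \ref{A3}), I would conclude the pathwise identity $X^{t,x^\lambda,u^\lambda}_s=\lambda\bar X^{t,x_1}_s+(1-\lambda)\bar X^{t,x_2}_s$ for all $s\in[t,T]$.

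With this identity in hand, I would apply the convexity of $g$ and $l$ in $(x,u)$ pathwise. For the running cost, convexity gives $l(s,X^{t,x^\lambda,u^\lambda}_s,u^\lambda_s)\leq\lambda\, l(s,\bar X^{t,x_1}_s,\bar u^{t,x_1}_s)+(1-\lambda)\, l(s,\bar X^{t,x_2}_s,\bar u^{t,x_2}_s)$, and analogously for the terminal cost $g$. Taking expectations and integrating over $[t,T]$ then yields $J(t,x^\lambda;u^\lambda)\leq\lambda\, J(t,x_1;\bar u^{t,x_1})+(1-\lambda)\, J(t,x_2;\bar u^{t,x_2})$. Since $\bar u^{t,x_i}$ is optimal, the right-hand side equals $\lambda V(t,x_1)+(1-\lambda)V(t,x_2)$, while by definition of the value function $V(t,x^\lambda)=\inf_{u}J(t,x^\lambda;u)\leq J(t,x^\lambda;u^\lambda)$. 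Combining these two inequalities produces $V(t,x^\lambda)\leq\lambda V(t,x_1)+(1-\lambda)V(t,x_2)$, i.e.\ convexity of $V(t,\cdot)$.

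This argument is essentially routine and I do not expect a serious obstacle. The only point requiring a little care is the affine-solution identity, which rests entirely on the linearity of \eqref{1:SDEt} and strong uniqueness; the admissibility of $u^\lambda$ and the passage from the pointwise cost inequality to the inequality between value functions are both immediate.
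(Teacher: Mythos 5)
Your proposal is correct and follows essentially the same route as the paper's proof: both exploit the affine solution map of the linear SDE \eqref{1:SDEt} to identify $X^{t,x^\lambda,u^\lambda}$ with the convex combination $\lambda\bar X^{t,x_1}+(1-\lambda)\bar X^{t,x_2}$, then apply the pathwise convexity of $g$ and $l$ and the optimality of $\bar u^{t,x_i}$ to conclude. No gaps to flag.
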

\begin{proof}
For any $x_0,x_1\in\mathbb R^n$ and any $\lambda\in(0,1)$, we denote $x_{\lambda}=\lambda x_1+(1-\lambda)x_0$.
By the linearity of the controlled SDE, we obtain
$$
X^{t,x_{\lambda},\lambda \bar u^{t,x_1}+(1-\lambda)\bar u^{t,x_0}}=\lambda\bar X^{t,x_1}+(1-\lambda)\bar X^{t,x_0}.
$$
By the definition of $V$ and the convexity of $g$ and $l$, we derive that
\begin{align*}
V(t,x_{\lambda})\leq &J(t,x_{\lambda};\lambda \bar u^{t,x_1}+(1-\lambda)\bar u^{t,x_0})\\
=&\mathbb E\Bigg[g(\lambda \bar X^{t,x_1}_T+(1-\lambda)\bar X^{t,x_0}_T)\\
&+\int_t^Tl(s,\lambda \bar X^{t,x_1}_s+(1-\lambda)\bar X^{t,x_0}_s,\lambda \bar u^{t,x_1}_s+(1-\lambda)\bar u^{t,x_0}_s)\mathrm{d}s\Bigg]\\
\leq&\lambda \mathbb E\Bigg[g(\bar X^{t,x_1}_T)+\int_t^Tl(s,\bar X^{t,x_1}_s,\bar u^{t,x_1}_s)\mathrm{d}s\Bigg]\\
&+(1-\lambda)\mathbb E\Bigg[ g(\bar X^{t,x_0}_T)+\int_t^Tl(s,\bar X^{t,x_0}_s,\bar u^{t,x_0}_s)\mathrm{d}s\Bigg]\\
=&\lambda J(t,x_1;\bar u^{t,x_1})+(1-\lambda)J(t,x_0;\bar u^{t,x_0})\\
=&\lambda V(t,x_1)+(1-\lambda)V(t,x_0).
\end{align*}
The proof is completed.
\end{proof}

\end{appendix}

\bigskip

\noindent{\bf Acknowledgments.} 
The authors are very grateful to Prof. Jiongmin Yong, University of Central Florida, for his profound insights into the quasi-Riccati equations and multiple valuable discussions.
The authors also thank Prof. Xun Li, The Hong Kong Polytechnic University, and Dr. Hanxiao Wang, Shenzhen University, for many helpful discussions and suggestions. 
This work was supported in part by the Natural Science Foundation of Shandong Province (ZR2024ZD35 and ZR2025MS63) and the National Natural Science Foundation of China (12271304).


\begin{thebibliography}{99}

\bibitem{bhty24}
A. Bensoussan, Z. Huang, S. Tang, and S. C. P. Yam.
 On mean field monotonicity conditions from a control theoretical perspective.
 \emph{Atti Accad. Naz. Lincei Rend. Lincei Mat. Appl.}, 36(1):89--137, 2025.

 \bibitem{bl08}
R. Buckdahn and J. Li.
Stochastic differential games and viscosity solutions of Hamilton-Jacobi-Bellman-Isaacs equations.
\emph{SIAM J. Control Optim.}, 47(1):444--475, 2008.

\bibitem{birs20}
M. Burzoni, V. Ignazio, A. M. Reppen, and H. M. Soner.
 Viscosity solutions for controlled McKean-Vlasov jump-diffusions.
 \emph{SIAM J. Control Optim.}, 58(3):1676--1699, 2020.

\bibitem{cd15}
R. Carmona and F. Delarue.
Forward-backward stochastic differential equations and controlled McKean-Vlasov dynamics.
\emph{Ann. Probab.}, 43(5):2647--2700, 2015.

\bibitem{et76}
I. Ekeland and R. Temam.
\emph{Convex analysis and variational problems}, Vol. 1 of Studies in Mathematics and its Applications.
North-Holland Publishing Co., Amsterdam-Oxford; American Elsevier Publishing Co., Inc., New York, 1976.
Translated from the French.

\bibitem{ektz14}
I. Ekren, C. Keller, N. Touzi, and J. Zhang.
On viscosity solutions of path dependent {PDE}s. 
\emph{Ann. Probab.}, 42(1):204--236, 2014.

\bibitem{kpq97}
N. El Karoui, S. Peng, and M. C. Quenez.
Backward stochastic differential equations in finance. 
\emph{Math. Finance}, 7(1):1--71, 1997.

\bibitem{gm09}
G. Guatteri and F. Masiero.
Infinite horizon and ergodic optimal quadratic control for an affine equation with stochastic coefficients.
\emph{SIAM J. Control Optim.}, 48(3):1600--1631, 2009.

\bibitem{ho08}
Y. Hu and B. {\O}ksendal.
Partial information linear quadratic control for jump diffusions.
\emph{SIAM J. Control Optim.}, 47(4):1744--1761, 2008.

\bibitem{hp95}
Y. Hu and S. Peng.
Solution of forward-backward stochastic differential equations.
\emph{Probab. Theory Related Fields}, 103(2):273--283, 1995.

\bibitem{hy14}
J. Huang and Z. Yu.
Solvability of indefinite stochastic Riccati equations and linear quadratic optimal control problems.
\emph{Systems Control Lett.}, 68:68--75, 2014.

\bibitem{hly15}
J. Huang, X. Li, and J. Yong.
A linear-quadratic optimal control problem for mean-field stochastic differential equations in infinite horizon.
\emph{Math. Control Relat. Fields}, 5(1):97--139, 2015.

\bibitem{lnwy25}
Z. Liu, Y. Niu, F. Wang, and Z. Yu.
FBSDEs under nonlinear domination-monotonicity conditions and optimal controls of linear SDEs. 
\emph{IEEE Trans. Automat. Control}, to appear, DOI: 10.1109/TAC.2025.3590802.

\bibitem{n06}
D. Nualart.
\emph{The Malliavin calculus and related topics}. Probability and its Applications (New York). 
Springer-Verlag, Berlin, second edition, 2006.

\bibitem{pp92}
\'E. Pardoux and S. Peng.
Backward stochastic differential equations and quasilinear parabolic partial differential equations.
\emph{Stochastic partial differential equations and their applications}, 176:200--217, 1992. Springer, Berlin.

\bibitem{pr14}
\'E. Pardoux and A. R{\u a}{\c s}canu.
\emph{Stochastic differential equations, backward {SDE}s, partial differential equations},
  volume 69 of \emph{Stochastic Modelling and Applied Probability}. Springer, Cham, 2014.

\bibitem{p92}
S. Peng.
A generalized dynamic programming principle and Hamilton-Jacobi-Bellman equation.
\emph{Stochastics Stochastics Rep.}, 38(2):119--134, 1992.

\bibitem{ypfw97}
S. Peng.
Backward stochastic differential equations---stochastic optimization theory and viscosity solutions of HJB equations, 
in {\em Topics on Stochastic Analysis}, J. Yan, S. Peng, S. Fang, and L. Wu, eds., Science Press, Beijing, pp. 85--138, 1997 (in Chinese).

\bibitem{pw99}
S. Peng and Z. Wu.
Fully coupled forward-backward stochastic differential equations and applications to optimal control.
\emph{SIAM J. Control Optim.}, 37(3):825--843, 1999.

\bibitem{sy18}
J. Sun and J. Yong.
Stochastic linear quadratic optimal control problems in infinite horizon.
\emph{Appl. Math. Optim.}, 78(1):145--183, 2018.

\bibitem{sy19}
J. Sun and J. Yong.
\emph{Stochastic linear-quadratic optimal control theory: open-loop and closed-loop solutions}.
SpringerBriefs in Mathematics. Springer, Cham, 2019.

\bibitem{sxy21}
J. Sun, J. Xiong, and J. Yong.
Indefinite stochastic linear-quadratic optimal control problems with random coefficients: closed-loop representation of open-loop optimal controls.
\emph{Ann. Appl. Probab.}, 31(1):460--499, 2021.

\bibitem{t03}
S. Tang.
General linear quadratic optimal stochastic control problems with random coefficients: linear stochastic Hamilton systems and backward stochastic Riccati equations.
\emph{SIAM J. Control Optim.}, 42(1):53--75, 2003.

\bibitem{t15}
S. Tang.
Dynamic programming for general linear quadratic optimal stochastic control with random coefficients.
\emph{SIAM J. Control Optim.}, 53(2):1082--1106, 2015.

\bibitem{wyy19}
Q. Wei, J. Yong, and Z. Yu.
Linear quadratic stochastic optimal control problems with operator coefficients: open-loop solutions.
\emph{ESAIM Control Optim. Calc. Var.}, 25: Paper No. 17, 38pp, 2019.

\bibitem{wxy25}
Z. Wu, B. Xie, and Z. Yu.
Probabilistic interpretation for a system of quasilinear parabolic partial differential-algebra equations: the classical solution.
\emph{Chinese Ann. Math. Ser. B}, 46(6):875--910, 2025.

\bibitem{wy08}
Z. Wu and Z. Yu.
Dynamic programming principle for one kind of stochastic recursive optimal control problem and Hamilton-Jacobi-Bellman equation.
\emph{SIAM J. Control Optim.}, 47(5):2616--2641, 2008.

\bibitem{wy14}
Z. Wu and Z. Yu.
Probabilistic interpretation for a system of quasilinear parabolic partial differential equation combined with algebra equations.
\emph{Stochastic Process. Appl.}, 124(12):3921--3947, 2014.

\bibitem{y97}
J. Yong.
Finding adapted solutions of forward-backward stochastic differential equations: method of continuation.
\emph{Probab. Theory Related Fields}, 107(4):537--572, 1997.

\bibitem{y13}
J. Yong.
Linear-quadratic optimal control problems for mean-field stochastic differential equations.
\emph{SIAM J. Control Optim.}, 51(4):2809--2838, 2013.

\bibitem{yz99}
J. Yong and X. Zhou.
\emph{Stochastic controls: Hamiltonian systems and HJB equations}.
Springer-Verlag, New York, 1999.

\bibitem{yy87}
Y. You.
A nonquadratic Bolza problem and a quasi-Riccati equation for distributed parameter systems.
\emph{SIAM J. Control Optim.}, 25(4):905--920, 1987.

\bibitem{yy97}
Y. You.
Synthesis of time-variant optimal control with nonquadratic criteria.
\emph{J. Math. Anal. Appl.}, 209(2):662--682, 1997.

\bibitem{z17}
J. Zhang.
\emph{Backward stochastic differential equations. From linear to fully nonlinear theory.}
Springer, New York, 2017.

\end{thebibliography}
\end{document}